\newtheorem{theorem}{Theorem}[section]
\newtheorem{proposition}[theorem]{Proposition}
\newtheorem{lemma}[theorem]{Lemma}
\theoremstyle{remark}
\newtheorem*{remark}{Remark}
\theoremstyle{definition}
\newtheorem{definition}{Definition}
\newcommand{\R}{\mathbb{R}}
\newcommand{\N}{\mathbb{N}}
\newcommand{\Z}{\mathbb{Z}}
\newcommand{\C}{\mathbb{C}}
\newcommand{\Four}{\mathcal{F}}
\newcommand{\T}{\mathbb{T}}
\newcommand{\E}{\mathcal{E}}
\newcommand{\eps}{\varepsilon}
\newcommand{\chevron}[1]{\langle #1 \rangle}
\newcommand{\norm}[1]{\left\lVert#1\right\rVert}
\newcommand{\paren}[1]{\left( #1 \right)}
\newcommand{\bracket}[1]{\left[ #1 \right]}
\newcommand{\abs}[1]{\left\lvert #1 \right\rvert}
\DeclareMathOperator{\supp}{supp}
\DeclareMathOperator{\lspan}{span}
\DeclareMathOperator{\BMO}{BMO}
\newcommand{\del}{\partial}
\newcommand{\grad}{\nabla}
\newcommand{\ddt}{\frac{d}{dt}}
\renewcommand{\div}{\operatorname{div}}
\newcommand{\Laplace}{\Delta}
\newcommand{\Lip}{\text{Lip}}
\renewcommand{\Re}{\operatorname{Re}}
\newcommand{\ddz}{\frac{d}{dz}}
\newcommand{\ith}{^\mathrm{th}}
\newcommand{\n}{^{-1}}
\newcommand{\indic}[1]{\chi_{\{#1\}}}
\newcommand{\eigen}[1]{\eta_{#1}} 
\newcommand{\Ctest}{C_c^\infty}
\newcommand{\ulow}{u_\ell}
\newcommand{\uhigh}{u_h}
\newcommand{\ulowth}[1]{\ulow^{#1}}
\newcommand{\uhighth}[1]{\uhigh^{#1}}
\newcommand{\Gammal}{\Gamma_\ell}
\newcommand{\HD}{\mathcal{H}}
\newcommand{\HDint}[2]{\int \abs{\Lambda^{#1} #2}^2}
\newcommand{\Ccalib}{\kappa}
\newcommand{\Cgamma}{C_\mathit{pth}}
\newcommand{\Comega}{C_\mathit{dmn}}
\newcommand{\Csuit}{C^\ast}
\newcommand{\Rom}[1]{\MakeUppercase{\romannumeral #1}}
\newcounter{step_count}[section]
\newcommand{\step}[1]{\stepcounter{step_count} \smallskip \noindent{\textbf{Step \arabic{step_count}:} #1}}
\title[Boundary Regularity for SQG]{Holder regularity up to the boundary for critical SQG on bounded domains}
\author[Stokols]{Logan F. Stokols} 
\address[L. F. Stokols]{\newline Department of Mathematics, \newline The University of Texas at Austin, Austin, TX 78712, USA}
\email{lstokols@math.utexas.edu}
\author[Vasseur]{Alexis F. Vasseur}
\address[A. F. Vasseur]{\newline Department of Mathematics, \newline The University of Texas at Austin, Austin, TX 78712, USA}
\email{vasseur@math.utexas.edu}
\date{\today}
\subjclass[2010]{35Q35,35Q86}
\keywords{SQG, Bounded domain, Besov spaces on bounded domains, H\"older regularity,  De Giorgi method}
\thanks{\textbf{Acknowledgment.} This work was partially supported by the NSF Grant DMS 1614918 and the NSF RTG Grant DMS 1840314. 
We would also like to thank the referee for suggesting several improvements to the paper, especially the construction of global suitable solutions.}
\begin{document}

\begin{abstract}
We consider the dissipative SQG equation in bounded domains, first introduced by Constantin and Ignatova in 2016. 
We show global Holder regularity up to the boundary of the solution, with a method based on the De Giorgi techniques. The boundary introduces several difficulties. In particular, the Dirichlet Laplacian is not translation invariant near the boundary, which leads to complications involving the Riesz transform.


\end{abstract}

\maketitle \centerline{\date}

\tableofcontents

\section{Preliminaries} \label{preliminaries}
The surface quasigeostrophic equation (SQG) is a special case of the quasi-geostrophic system (QG) with uniform potential vorticity. The QG model is used extensively in meteorology and oceanography (e.g. Charney \cite{Ch}). These models are described in Pedlosky \cite{Pe}. The SQG model was popularized by Constantin, Majda and Tabak in \cite{CoMaTa}, due to its similarities with the Euler and Navier-Stokes  equation. They proposed it as a toy model for the study of 3D Fluid equations (see also Held, Garner, Pierrehumbert, and Swanson \cite{GaHePiSw}). 

\vskip0.3cm
 We consider in this paper  critical SQG on a bounded domain. We will focus on the following model, which was introduced by Constantin and Ignatova in \cite{CoIg.fraclap} and \cite{CoIg.sqg}.
 Consider  $\Omega$ a connected bounded domain in $\R^2$ with $C^{2,\beta}$ boundary for some $\beta \in (0,1)$, and the Laplacian with homogeneous Dirichlet boundary conditions $-\Laplace_D$.  If $(\eigen{k})_{k \in \N}$ is the sequence of $L^2$-normalized eigenfunctions of $-\Laplace_D$ with corresponding eigenvalues $\lambda_k$ listed in non-decreasing order, define
\[ \Lambda f := \sum_{k=0}^\infty \sqrt{\lambda_k} \chevron{f,\eigen{k}}_{L^2(\Omega)} \eigen{k}. \]
The critical SQG problem on $\Omega$ with initial data $\theta_0 \in L^2(\Omega)$ is
\begin{equation} \label{eq:main nonlinear} \begin{cases}
\del_t \theta + u\cdot \grad \theta + \Lambda \theta = 0 & (0,T) \times \Omega,
\\ u = \grad^\perp \Lambda\n \theta & [0,T] \times \Omega,
\\ \theta = \theta_0 & \{0\} \times \Omega.
\end{cases} \end{equation}

In the model, the dissipation $\Lambda =\paren{-\Delta_D}^{1/2}$  is due to the  Ekman pumping, while the nonlinear velocity $u$ comes from the geostrophic and hydrostatic balance (see \cite{Pe}).
\vskip.3cm

The main result of this paper is the following:
\begin{theorem} \label{thm:main continuity}
There exists a universal constant $C_1 > 0$ such that the following holds:

For any $\Omega \subseteq \R^2$ open and bounded with $C^{2,\beta}$ boundary, $\beta \in (0,1)$, there exists for any $S > 0$ a constant $C_S > 0$ (depending also on $\Omega$), and for any $k > 0$ a constant $\alpha_{k,S} \in (0,1)$ (depending also on $\Omega$) so the following holds:
\vskip.3cm

For any $\theta_0 \in L^2(\Omega)$ there exists a global-in-time weak solution $\theta \in L^\infty([0,\infty);L^2(\Omega)) \cap L^2([0,\infty);\HD^{1/2})$ to \eqref{eq:main nonlinear} verifying $\theta(t,x) = 0$ on $(0,\infty)\times \del\Omega$ and $\lim_{t \to 0} \theta(t,\cdot) = \theta_0$ in the $L^2$-weak sense.  For $k \geq \norm{\theta_0}_{L^2(\Omega)}$ and for every $S > 0$
\[ \theta \in C^{\alpha_{k,S}}([S,\infty)\times\bar{\Omega}) \]
where $\bar{\Omega}$ denotes the closure of $\Omega$.  

Moreover,
\[ \norm{\theta}_{L^\infty([S,\infty)\times\bar{\Omega})} \leq \frac{C_1}{S} \norm{\theta_0}_{L^2(\Omega)} \]
and
\[ \norm{\theta}_{C^{\alpha_{k,S}}([S,\infty)\times\bar{\Omega})} \leq C_S \norm{\theta_0}_{L^2(\Omega)}. \]

\end{theorem}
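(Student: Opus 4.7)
The plan is to prove Theorem \ref{thm:main continuity} by a four-step De Giorgi scheme in the spirit of Caffarelli-Vasseur, adapted to the bounded-domain setting: (i) construct a global weak solution via Galerkin; (ii) use a first De Giorgi iteration to upgrade the basic $L^2$ energy estimate to the $L^\infty$ decay; (iii) prove a parabolic oscillation lemma valid up to $\del\Omega$; (iv) iterate the oscillation lemma to obtain the H\"older bound. For existence I would use Galerkin truncation along the eigenfunctions $\eigen{k}$ of $-\Laplace_D$. Testing the equation against $\theta$ and using $\div u=0$ together with $\theta|_{\del\Omega}=0$ eliminates the transport term, yielding
\[ \norm{\theta(t)}_{L^2(\Omega)}^2 + 2\int_0^t \HDinth{\theta(s)}\,ds \leq \norm{\theta_0}_{L^2(\Omega)}^2. \]
Since $\grad\Lambda\n$ is bounded $L^2\to L^2$, $\norm{u}_{L^2}\lesssim\norm{\theta}_{L^2}$, and a standard Aubin-Lions compactness argument suffices to pass to the limit in the nonlinear term.

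For the $L^\infty$ decay $\norm{\theta(t)}_{L^\infty}\lesssim \norm{\theta_0}_{L^2}/t$ I would apply the first De Giorgi lemma to truncations $(\theta-k_n)_+$ on shrinking time intervals $[t_n,t]$ with levels $k_n\nearrow k$. The fractional Sobolev embedding $\HD^{1/2}(\Omega)\into L^4(\Omega)$ supplies the superlinear gain needed to set up a recursion of the form $U_{n+1}\leq CM^n U_n^{1+\delta}$ on the truncation energies $U_n$; once $k$ exceeds a universal multiple of $\norm{\theta_0}_{L^2}/t$ the recursion forces $U_n\to 0$, which is the claimed bound.

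The heart of the argument is the second De Giorgi step: an oscillation lemma asserting that if $\theta\leq 1$ on a suitable parabolic cylinder and $\theta\leq 0$ on a positive-measure subset---or, near $\del\Omega$, the cylinder simply touches the boundary where $\theta$ vanishes---then $\theta\leq 1-\lambda$ on a concentric smaller cylinder. The principal obstacle is that $\Lambda\n$ on $\Omega$ is not a convolution operator, so the Riesz transform $\grad^\perp\Lambda\n$ is not translation invariant and the whole-space commutator machinery breaks down near $\del\Omega$. I would handle this by decomposing $\theta = \ulow + \uhigh$ via a spectral cutoff adapted to $-\Laplace_D$, controlling $\ulow$ using the two-sided Gaussian heat-kernel bounds for $-\Laplace_D$ (available thanks to the $C^{2,\beta}$ regularity of $\del\Omega$), and absorbing the $\uhigh$-commutator error via the diffusive gain from $\Lambda$. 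Near $\del\Omega$ the Dirichlet condition plays automatically the role of the ``negativity'' hypothesis.

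Finally, iterating the oscillation lemma on a sequence of shrinking parabolic cylinders centered at arbitrary points of $[S,\infty)\times\bar\Omega$ gives a geometric decay of oscillation, equivalent by a Campanato-type characterization to the H\"older estimate. Tracking the constants through this iteration yields the claimed linear dependence of both the $L^\infty$ and the H\"older norms on $\norm{\theta_0}_{L^2(\Omega)}$.
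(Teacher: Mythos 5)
Your four-step architecture (existence, $L^\infty$ decay, oscillation lemma, iteration) matches the paper's scheme and the high/low frequency split of the velocity is also the right idea, but there are two genuine technical gaps that would prevent the proof from closing as written.

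First, the existence step. You propose to build the solution by a direct Galerkin scheme and pass to the limit with Aubin--Lions. That does produce a Leray--Hopf weak solution, but the second De Giorgi lemma requires a distributional bound on $\del_t (\theta - \Psi)_+^2$ against non-negative test functions $\varphi$ (the ``local energy inequality'' \eqref{local condition}), and, exactly as for Navier--Stokes suitable solutions in the sense of Caffarelli--Kohn--Nirenberg, there is no known way to extract this from a Galerkin sequence. The paper circumvents this by first constructing solutions of the $\eps\Laplace$-viscous problem \eqref{eq:viscous} (which are smooth enough to justify the localized truncated energy identity pointwise) and then passing $\eps\to 0$, checking that both families of energy inequalities are stable under the limit. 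Without this detour your oscillation lemma has no a priori estimate to feed on.

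Second, and more seriously, you have not addressed what to do with the fact that the low-frequency part $\ulow$ of the velocity is only Lipschitz, not uniformly bounded at every scale. When you rescale $\theta(\eps^k t,\eps^k x)$, the Lipschitz constant of the low-frequency drift stays bounded but its $L^\infty$ norm does not; the transport term $\int\theta_+\,u\cdot\grad\Psi$ in the energy inequality is therefore not controllable with a stationary barrier $\Psi$. In the whole-space BMO setting of Caffarelli--Vasseur and Novack--Vasseur this is handled by translating into a moving frame following the average drift, i.e.\ by replacing $\theta$ with $\theta(t,\,\cdot+\Gamma(t))$. On a bounded domain you cannot translate $\theta$ because $\Omega$ is fixed: the paper instead makes the \emph{barrier} travel, centering the truncations on balls $B_R(\Gamma(t))$ where $\Gamma$ is a Lagrangian path of $\ulow$, and then tracks the discrepancy $\gamma=\Gammal-\Gamma$ between Lagrangian paths at consecutive scales (this is what the hypothesis $\norm{\dot\gamma}_\infty\le\Cgamma$, $\gamma(0)=0$ in Lemmas~\ref{thm:energy inequality}--\ref{thm:oscillation shifted} is doing). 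Your sketch leaves the reference frame fixed, and the resulting energy inequality simply does not close: the term $\ulow\cdot\grad\Psi$ is unbounded uniformly in the scaling parameter. Relatedly, your heuristic that ``near $\del\Omega$ the Dirichlet condition plays automatically the role of the negativity hypothesis'' is not what the paper does and is not obviously sufficient---the argument must work at \emph{every} scale and at \emph{every} base point simultaneously, and the Caffarelli--Stinga representation together with the extension-by-zero bound $\norm{Ef}_{H^{s}(\R^2)}\le\norm{f}_{\HD^s}$ is used precisely so that the interior and boundary regimes are treated by a single, scale-invariant estimate.

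One smaller point: you write ``decompose $\theta = \ulow + \uhigh$''---presumably you mean $u$, but I mention it because the decomposition is not merely a low/high frequency split of $u$; the paper introduces the quantitative notion of a \emph{calibrated} sequence with three families of bounds ($L^\infty$, $\grad$ in $L^\infty$, and $\Lambda^{-1/4}$ in $L^\infty$) precisely so that the split is stable under the parabolic rescaling in Lemma~\ref{thm:scaling}, with a center $N$ that shifts by $-\log_2\eps$ at each step. Gaussian heat-kernel bounds alone, without this bookkeeping of where the cutoff frequency sits at each scale, do not give you a closed induction.
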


This model was first thoroughly studied in the cases without boundaries (either $\R^2$ or the torus $\T^2$).   
Global weak solutions were first constructed in Resnick \cite{Re}. Global regularity was first shown with small initial values by Constantin, Cordoba, and Wu \cite{CoCoWu}, or extra $C^\alpha$ regularity on the velocity in Constantin and Wu \cite{CoWu} and Dong and Pavlovi\'{c} \cite{DoPa}. In \cite{KiNaVo}, Kiselev, Nazarov and Volberg showed the 
propagation of $C^\infty$ regularity. The global $C^\infty$ regularity for any $L^2$ initial values was first proved in  \cite{CaVa.sqg} (see also Kiselev and Nazarov \cite{KiNa.variations} and Constantin and Vicol \cite{CoVi}).   

\vskip0.3cm

In the presence of boundaries, there are several  distinct ways to define SQG. This can be attributed  to alternative generalizations of the fractional Laplacian.  Kriventsov \cite{Kr} considered a two-phase problem which satisfies critical SQG only in part of the domain, and was able to prove H\"{o}lder regularity in the time-independent case.  This problem, intended to model air currents over a region containing both land and water, contains a half-Laplacian and a Riesz transform defined, not spectrally, but in terms of extension.  In \cite{NoVa.bounded}, the authors consider the Euler-Coriolis-Boussinesq model and derive the full 3D inviscid quasigeostrophic system in an impermeable cylinder (see also \cite{NoVa.solutions} for the construction of small time smooth solutions to the model).  They obtain natural boundary conditions for SQG distinct from the homogeneous conditions introduced in \cite{CoIg.fraclap}, \cite{CoIg.sqg} and described above. 
However, due to the complexity of the model described in \cite{NoVa.bounded}, we focus in this paper only on the homogenous case.

\vskip0.3cm
Existence of weak solutions for \eqref{eq:main nonlinear}  is proven in \cite{CoIg.fraclap}, and local existence and uniqueness for strong solutions with sufficiently smooth initial data is proven by Constantin and Nguyen in \cite{CoNg.strong} (see also Constantin and Nguyen \cite{CoNg} and Constantin, Ignatova, and Nguyen \cite{CoIgNg} for the inviscid case). The interior regularity of solutions  is proven in \cite{CoIg.sqg} (together with propagation of $L^\infty$ bounds).  The method of proof for interior regularity uses  nonlinear maximum principles, introduced by Constantin and Vicol \cite{CoVi}.   However, the bounds obtained in \cite{CoIg.sqg} blow up near the boundary and do not provide global regularity.
  In \cite{CoIg.sqg} Remark 1, questions about   global regularity are suggested as  open problems.  Both the $C^\alpha(\bar{\Omega})$ regularity,  and bootstrapping to  $C^\infty(\bar{\Omega})$ regularity, are indentified as interesting problems.  
  Our  result answers the first question, by  showing  that solutions $\theta$ to (\ref{eq:main nonlinear}) are globally H\"{o}lder continuous.
  Bootstrapping to $C^\infty$ involves different techniques, and will be studied in a forthcoming work \cite{StVa.higher}.  
\vskip0.3cm

Our proof is based on  the De Giorgi method pioneered by De Giorgi in \cite{DG}.  The method was applied to the SQG problem first in \cite{CaVa.sqg}.  The method is powerful for showing $C^\alpha$ regularity of elliptic- and parabolic-type equations. It has been applied in a variety of situations for non-local problems, such as the fractional heat equation in \cite{CaChVa.nio}, the time-fractional case in \cite{AlCaVa}, the 3D Quasigeostrophic problem in \cite{NoVa.qg}, or the kinetic setting by Imbert and Silvestre \cite{ImSi} or in \cite{St}.  The method has also been applied in more exotic, non-elliptic situations such as Hamilton-Jacobi equations (see \cite{ChVa}, \cite{StVa.hamjac}).  

The De Giorgi method involves rescaling our equation by zooming in iteratively, and applying regularity results at each scale.  Therefore it is important that certain results be proven independently of the domain $\Omega$.  
The particular dependence on $\Omega$ will be made clear in each lemma of this paper.  
As a general overview, in the proof of Theorem~\ref{thm:main continuity} we will apply the results of Sections~\ref{sec:suitable} and \ref{sec:littlewood paley} only on a single fixed domain, while the results of Sections~\ref{sec:de giorgi} and \ref{sec:harnack} must be applied at each level of zoom with a different rescaled domain each time.  

\vskip0.3cm

The first broad  idea of our proof consists in decoupling the  velocity $u$ from  $\theta$ to work on  a linear equation, and prove alternating regularity results for $\theta$ and $u$ independently.  We can show that $\theta$ is in $L^\infty$ without any assumption on $u$ (see Section~\ref{sec:suitable}).  
Using that $L^\infty$ bound, we will need to obtain scaling invariant controls on the drift   $u = \grad \Lambda^{-1} \theta$.  By scaling invariant, we mean that the bound, once proven on $\Omega$ fixed, will remain true of the scaled function $u(\eps \, \cdot, \eps \, \cdot)$ for all $\eps$.  
Unfortunately, although the Riesz transform is bounded from $L^p$ to $L^p$ for all $p$ finite, it is not bounded for $p = \infty$.  
The usual technique, therefore, is to consider $\BMO$ (as in \cite{CaVa.sqg} and \cite{NoVa.qg}), but in the case of bounded domains the Riesz transform is not known to be bounded in this space either.  Our solution is to use extensions of the Littlewood-Paley theory to bounded domains. 

The adaptation of Fourier analysis and Littlewood-Paley theory to Schrodinger operators is a well-studied subject (e.g. Zheng \cite{Zh}, Benedetto and Zheng \cite{BeZh}).  As an application of this theory, Iwabuchi, Matsuyama, and Taniguchi \cite{IMT.besov}, \cite{IMT.schrodinger}, and Bui, Duong, and Yang \cite{BuDuYa} have considered operators defined on open subsets of $\R^n$, which includes as a special case the operator $-\Laplace_D$ (a Schrodinger operator with zero potential).  In particular, in \cite{IMT.bilinear},  Iwabuchi, Matsuyama, and Taniguchi derive many important results, including the Bernstein inequalities, for Besov spaces adapted to the operator $-\Laplace_D$ on bounded open subsets of $\R^n$ with smooth boundary.  This theory turns out to greatly improve our understanding of the Riesz transform $\grad \Lambda^{-1}$ on bounded domains.

Using the results of \cite{IMT.bilinear}, we will be able to show that the Riesz transform of an $L^\infty$ function whose Fourier decomposition $f = \sum f_k \eigen{k}$ is supported on high frequencies $k > N$ will be bounded in the weak sobolev space $W^{-1/4,\infty}$, and the Riesz transform of an $L^\infty$ function whose Fourier decomposition is supported on low frequencies $k<N$ will have bounded Lipschitz constant.  The cutoff $N$ for dividing high frequencies from low frequencies must depend however on the size of the domain $\Omega$.  In the case of $\R^2$, where $\grad$  and $\Lambda^{-1}$ commute, this is equivalent to the observation that the Riesz transform is bounded from $L^\infty$ to the Besov space $B_{\infty,\infty}^0$.  In the case of bounded domains, the argument must be more subtle.  We must decompose $\theta$ into its Littlewood-Paley projections, individually bound the Riesz transform of each projection in multiple spaces, and then recombine these infinitely-many functions into a low-frequency collection and a high-frequency collection depending on the scale of oscillation we are trying to detect (see Section~\ref{sec:littlewood paley} and Lemma~\ref{thm:calibration is good}).  

We make this notion precise with the following definition:

\begin{definition}[Calibrated sequence] \label{def:calibrated}
Let $\Omega\subseteq \R^2$ be any bounded open set and $0<T\in\R$.  We call a function $u\in L^2([0,T]\times\Omega)$ \textbf{calibrated} if it can be decomposed as the sum of a calibrated sequence 
\[ u = \sum_{j \in \Z} u_j \]
with each $u_j \in L^2([0,T]\times\Omega)$ and the infinite sum converging in the sense of $L^2$.  

We call a sequence $(u_j)_{j\in\Z}$ \textbf{calibrated} for a constant $\Ccalib$ and a center $N$ if each term of the sequence satisfies the following bounds.  
\begin{align*}
\norm{u_j}_{L^\infty([0,T]\times\Omega)} &\leq \Ccalib, \\
\norm{\grad u_j}_{L^\infty([0,T]\times\Omega)} &\leq 2^{j} 2^{-N} \Ccalib, \\
\norm{\Lambda^{-1/4} u_j}_{L^\infty([0,T]\times\Omega)} &\leq 2^{-j/4} 2^{N/4} \Ccalib.  
\end{align*} 

\end{definition}
\vskip.3cm

In Section \ref{sec:holder} we will show that a calibrated velocity remains calibrated at all scales (specifically, with fixed constant $\Ccalib$ but a changing center $N$).  Therefore we can consider, for any domain $\Omega$ and time $T$, the system of linear equations
\begin{equation} \label{eq:main linear} \begin{cases}
\del_t \theta + u \cdot \grad \theta + \Lambda \theta = 0, & [-T,0]\times\Omega \\
\div u = 0 & [-T,0]\times\Omega.
\end{cases} \end{equation}

In Section \ref{sec:suitable} we show that solutions to \eqref{eq:main nonlinear} with $L^2$ initial data exist and regularize instantly into $L^\infty$, and in Section~\ref{sec:littlewood paley} we show that the Riesz transform of $L^\infty$ data is calibrated.  Then in Sections \ref{sec:de giorgi} and \ref{sec:harnack} we will show that solutions to \eqref{eq:main linear} with calibrated velocity have decreasing oscillation between scales.  By iteratively applying this oscillation lemma and scaling our equation, we show in Section~\ref{sec:holder} that $\theta$ is H\"{o}lder continuous.  
\vskip.3cm

The low-freqency component of a calibrated velocity $u$ will be uniformly Lipschitz, which means it is only bounded up to a constant.  This is similar to the case of BMO velocity functions in \cite{CaVa.sqg} and \cite{NoVa.qg}, which by the John-Nirenberg inequality are also bounded up to a constant.  As in these cases, we consider a moving reference frame, denoted $\Gamma:[0,T] \to \R^2$, in which our velocity is shifted by a constant, making the low-frequency component of $u$ bounded. There are two differences between our implementation of this technique and the implementation in \cite{CaVa.sqg} and \cite{NoVa.qg}: firstly, we subtract off the value of the low-frequency part of $u$ at a point, rather than subtracting off the average of $u$ on a ball.  Secondly, rather than applying the standard De Giorgi argument to $\tilde{\theta}(t,x) := \theta(t,x+\Gamma(t))$, we must reformulate the De Giorgi argument to ``follow'' the path $\Gamma(t)$ explicitly.  This is a purely notational difference, but it is necessary because otherwise $\Omega$ would be time-dependent.  
%

At each scale, there will be a natural Lagrangian path $\Gammal$ corresponding to the low-frequency part of $u$.  However, the low-frequency part of $u$ changes non-trivially as we zoom, so $\Gammal$ will be different at each scale.  Throughout Sections~\ref{sec:de giorgi} and \ref{sec:harnack}, we will use $\Gammal$ to denote the ``current'' Lagrangian path and $\Gamma$ to denote the Lagrangian path at the previous scale. In the proof of Theorem~\ref{thm:main continuity} in Section~\ref{sec:holder}, these are denoted $\Gamma_k(t)$ and $\eps\n \Gamma_{k-1}(\eps t)$ respectively.  In Lemmas~\ref{thm:energy inequality}, \ref{thm:DG1}, \ref{thm:DG2} and \ref{thm:oscillation general}, we will make assumptions about $\theta$ which are centered on $x \approx \Gamma(t)$ and obtain conclusions which are similarly centered on $x \approx \Gamma(t)$, conditioned on $\gamma := \Gammal-\Gamma$ being small in Lipschitz norm.  Finally in Lemma~\ref{thm:oscillation shifted}, we will show that, given bounds on $\theta$ for $x \approx \Gamma(t)$, we can bound $\theta$ for $x \approx \Gammal(t)$ for $t$ sufficiently small, again conditioned on $\gamma := \Gammal-\Gamma$ being small in Lipschitz norm.  Controlling $\gamma$ amounts to controlling the change in $\Gammal$ between consecutive scales, which is much easier to obtain than scale-independent bounds on $\Gammal$.  
%
%
\vskip.3cm

Previous applications of the De Giorgi method to non-local equations such as \eqref{eq:main linear} generally make extensive use of either an extension representation (c.f. \cite{CaVa.sqg}) or a singular integral representation (c.f. \cite{NoVa.qg}).  In this paper, we  use the singular integral representation for the Dirichlet fractional Laplacian  derived by Caffarelli and Stinga \cite{CaSt}. It is based on the results of Stinga and Torrea \cite{StTo} which generalize the extension representation of Caffarelli and Silvestre \cite{CaSi}. 
This theory is pivotal in translating the existing non-local De Giorgi techniques to the problem at hand (see Section~\ref{sec:lambda}).  
\vskip.3cm

In order to apply De Giorgi's method to weak solutions of \eqref{eq:main linear}, we will need to assume a certain a priori estimate which holds, in particular, for $L^2(H_0^1)$ weak solutions.  However, such solutions are only known to exist for short time and for $H^2$ initial data, as shown by Constantin and Nguyen in \cite{CoNg.strong}.  We call weak solutions in $L^2(\HD^{1/2})$ which happen to verify this a priori estimate ``suitable solutions,'' by analogy to suitable solutions to Navier-Stokes as in \cite{CaKoNi}.  We give the formal definition of suitable solutions in Section~\ref{sec:suitable}, where we also construct global-in-time suitable solutions using the vanishing viscosity method.  Compared to \cite{CoIg.fraclap}, our solutions verify a full family of localized energy inequalities which allow us to apply the De Giorgi method.  
\vskip.3cm

The Paper is organized as follows. Section \ref{sec:lambda} is dedicated to basic properties of the operator $\Lambda$ and the corresponding Sobolev spaces $\HD^s$.  In Section~\ref{sec:suitable} we construct weak solutions which verify the suitability conditions.  
In Section~\ref{sec:littlewood paley} we prove that the Riesz transform of the $L^\infty$ function $\theta$ is callibrated.  Section~\ref{sec:de giorgi} contains the De Giorgi Lemmas.  Section~\ref{sec:harnack} is dedicated to the local decrease in oscillation through an analog of the Harnack inequality.  Finally in Section~\ref{sec:holder} we prove the main theorem, Theorem~\ref{thm:main continuity}.  In the Appendix \ref{sec:technical} we prove a few technical lemmas which are needed in the main paper.  
\vskip.3cm

\textbf{Notation.} Throughout the paper, we will use the following notations.
By $\eigen{k}$ and $\lambda_k$ we mean the eigenfunctions and eigenvalues of $-\Laplace_D$, with $\lambda_0 \leq \lambda_1 \leq \ldots$ and $\norm{\eigen{k}}_2 = 1$ for all $k$.  
If $f = \sum_k f_k \eigen{k}$ then
\begin{align*} 
\norm{f}_{\HD^s} &:= \paren{\sum_k \lambda_k^{s} f_k^2}^{1/2} 
\\ &= \HDint{s}{f}. 
\end{align*}
We suppress the dependence on $\Omega$, though in fact $\Lambda$, $\lambda_k$, and $\eigen{k}$ are defined in terms of the domain $\Omega$.  The relevant domain will be clear from context.  The norm on $\HD^s$ is in fact a norm, not a seminorm, since $\norm{f}_{L^2(\Omega)} \leq \lambda_0^{-s/2} \norm{f}_{\HD^s}$.  

For a set $A$ and a function $f:A \to \R$, denote
\begin{align*}
\bracket{f}_{\alpha;A} &:= \sup_{x,y \in A, x \neq y} \frac{|f(x)-f(y)|}{|x-y|^\alpha},  &\alpha \in (0,1], \\
\norm{f}_{C^\alpha(A)} &:= \norm{f}_{L^\infty(A)} + \bracket{f}_{\alpha;A}, & \alpha \in (0,1], \\
\norm{f}_{C^{k,\alpha}(A)} &:= \sum_{n=0}^k \norm{D^n f}_{L^\infty(A)} + \bracket{D^k f}_{\alpha;A}, & \alpha \in (0,1], k \in \N.
\end{align*}
When the domain $A$ is ommited, the relevant spatial domain $\Omega$ is implied.  

We will use the notation $(x)_+ := \max(0,x)$.  When the parentheses are ommited, the subscript~$+$ is merely a label.  

Throughout this paper, if an integral sign is written $\int$ without a specified domain, the domain is implied to be $\Omega$, with $\Omega$ defined in context.  

For any vector $v = (v_1,v_2)$, by $v^\perp$ we mean $(-v_2,v_1)$. By $\grad^\perp$ we mean $(-\del_y, \del_x)$.  

In the remainder of this paper, the differential operator $D^2$ refers to the Hessian in space, excluding time derivatives.  The function space $\Ctest$ consists of smooth functions with compact support.  By abuse of notation, if $\Gamma: [a,b] \to \R^2$, we write $[a,b] \times B_R(\Gamma)$ to denote $\{(t,x) \in [a,b]\times\R^2: |x-\Gamma(t)| \leq R\}$.  


The symbol $C$ represents a constant which may change value each time it is written.  

\vskip1cm
\section{Properties of the Fractional Dirichlet Laplacian} \label{sec:lambda}

In this section we will investigate the basic properties of the operator $\Lambda$ and the space $\HD^s$ on a general domain $\Omega$.  

We begin by stating a result of \cite{CaSt} which gives us a singular integral representation of the $\HD^s$ norm.  
\begin{proposition}[Caffarelli-Stinga Representation] \label{thm:Caff Stinga representation}
Let $s \in (0,1)$ and $f,g \in \HD^{s}$ on a bounded $C^{2,\beta}$ domain $\Omega \subseteq \R^2$.  Then
\begin{equation} \label{Caff Stinga representation}
\int_\Omega \Lambda^s f \Lambda^s g \,dx = \iint_{\Omega^2} [f(x)-f(y)][g(x)-g(y)] K_{2s}(x,y) \,dxdy + \int_{\Omega} f(x) g(x) B_{2s}(x) \,dx 
\end{equation}
for kernels $K_{2s}$ and $B_{2s}$ which depend on the parameter $s$ and the domain $\Omega$.  
\vskip.3cm

There exists a constant $C = C(s)$ independent of $\Omega$ such that
\[ 0 \leq K_{2s}(x,y) \leq \frac{C(s)}{|x-y|^{2+2s}} \]
for all $x \neq y \in \Omega$ and
\[ 0 \leq B_{2s}(x) \]
for all $x \in \Omega$.  

Moreover, for any $s,t \in (0,2)$ there exists a constant $c = c(s,t,\Omega)$ such that for all $x\neq y \in \Omega$
\begin{equation} \label{relationship Kt and Ks} K_{t}(x,y) \leq c |x-y|^{s-t} K_{s}(x,y). \end{equation}
\end{proposition}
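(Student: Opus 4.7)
The plan is to derive everything from the heat-semigroup subordination formula, which for the Dirichlet Laplacian on a bounded $C^{2,\beta}$ domain is available via \cite{StTo} and which yields the pointwise singular integral representation of Caffarelli--Stinga \cite{CaSt}. Concretely, writing $p_t^\Omega(x,y)$ for the Dirichlet heat kernel on $\Omega$, the spectral theorem gives, for $s \in (0,1)$,
\[
\Lambda^{2s} f(x) = \frac{1}{|\Gamma(-s)|} \int_0^\infty \bigl(f(x) - e^{t \Laplace_D} f(x)\bigr) \, t^{-1-s} \, \frac{dt}{t^{0}},
\]
and since $e^{t\Laplace_D} f(x) = \int_\Omega p_t^\Omega(x,y) f(y)\,dy$ with $\int_\Omega p_t^\Omega(x,y)\,dy \le 1$, I would split $f(x) - e^{t\Laplace_D}f(x) = \int_\Omega [f(x)-f(y)] p_t^\Omega(x,y)\,dy + f(x)\bigl(1-\int_\Omega p_t^\Omega(x,y)\,dy\bigr)$. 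Exchanging the $t$- and $y$-integrals (justified by Fubini once one checks $L^2$-integrability using $f \in \HD^{2s}$) gives the pointwise formula with
\[
K_{2s}(x,y) = \frac{1}{|\Gamma(-s)|}\int_0^\infty p_t^\Omega(x,y)\, t^{-1-s}\,dt, \qquad
B_{2s}(x) = \frac{1}{|\Gamma(-s)|}\int_0^\infty\!\Bigl(1 - \int_\Omega p_t^\Omega(x,y)\,dy\Bigr)\, t^{-1-s}\,dt.
\]
Both kernels are nonnegative because $p_t^\Omega\ge 0$ and the Dirichlet semigroup is sub-Markovian, giving $B_{2s}\ge 0$ for free. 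The bilinear identity \eqref{Caff Stinga representation} then follows by pairing the pointwise representation of $\Lambda^{2s}f$ against $g$ in $L^2$, splitting the double integral in $(x,y)$, swapping $x\leftrightarrow y$ on half of it, and averaging to produce the symmetric difference $[f(x)-f(y)][g(x)-g(y)]$.

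For the upper bound on $K_{2s}$ (and crucially, its independence from $\Omega$) I would use the standard domination $p_t^\Omega(x,y)\le p_t^{\R^2}(x,y) = \tfrac{1}{4\pi t} e^{-|x-y|^2/4t}$, so
\[
K_{2s}(x,y) \le \frac{1}{4\pi |\Gamma(-s)|}\int_0^\infty t^{-2-s} e^{-|x-y|^2/4t}\,dt,
\]
and the change of variables $\tau = |x-y|^2/4t$ reduces the integral to $|x-y|^{-2-2s}$ times a $\Gamma$-function of $s$ alone, giving the universal $C(s)$. Nonnegativity of $K_{2s}$ is immediate from the same representation.

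The comparison inequality \eqref{relationship Kt and Ks} is the most delicate part and the one I would spend most effort on. Writing $K_\alpha(x,y) = c_\alpha \int_0^\infty p_t^\Omega(x,y)\, t^{-1-\alpha/2}\,dt$ (now for $\alpha\in(0,2)$, absorbing integer cases into the usual convention), we want $K_t \le c\,|x-y|^{s-t} K_s$. I would split the integration in $t$ at the Gaussian scale $t_0 = |x-y|^2$. For $t\le t_0$ the Gaussian domination shows $p_t^\Omega$ is concentrated where the ratio $t^{-1-t/2}/t^{-1-s/2} = t^{(s-t)/2}$ is comparable to $|x-y|^{s-t}$, producing the desired factor on the small-time part. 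For $t \ge t_0$, since $\Omega$ is bounded one has $|x-y|\le\mathrm{diam}(\Omega)$ and the long-time behavior of $p_t^\Omega$ decays exponentially in the gap $\lambda_0$, so both integrals are controlled by $\Omega$-dependent constants; here $|x-y|^{s-t}$ is bounded above and below by $\Omega$-dependent constants, which is why $c$ is allowed to depend on $\Omega$. Putting the two regimes together yields the claim with an explicit $c(s,t,\Omega)$, and the main technical care is justifying the Fubini steps, the heat-kernel domination, and the exponential large-time decay on $\Omega$.
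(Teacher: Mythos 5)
Your derivation of the representation \eqref{Caff Stinga representation} and the upper bound $0 \leq K_{2s} \leq C(s)|x-y|^{-2-2s}$ is sound and essentially retraces what Caffarelli--Stinga do: subordination against the Dirichlet heat semigroup, sub-Markovianity for $B_{2s}\ge 0$, and Gaussian domination $p_t^\Omega \le p_t^{\R^2}$ for the $\Omega$-independent kernel bound. The paper does not reprove this; it simply cites \cite{CaSt}, Theorems 2.3 and 2.4.

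The genuine gap is in your argument for \eqref{relationship Kt and Ks}. After writing $K_\alpha(x,y)=c_\alpha\int_0^\infty p_\tau^\Omega(x,y)\,\tau^{-1-\alpha/2}\,d\tau$ and splitting at $\tau_0=|x-y|^2$, the pointwise inequality $\tau^{-1-t/2}\le |x-y|^{s-t}\tau^{-1-s/2}$ holds only on \emph{one} side of $\tau_0$ (which side depends on $\sign(s-t)$); on the other side it fails, so you cannot just integrate term by term. To absorb the ``wrong'' piece of $K_t$ you need to know that the ``right'' piece of $K_s$ is not too small, i.e.\ you need a \emph{lower} bound on $K_s(x,y)$, and the Gaussian domination $p_\tau^\Omega \le p_\tau^{\R^2}$ provides nothing in that direction. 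This is not a cosmetic omission: near $\partial\Omega$ the Dirichlet heat kernel degenerates, and the short-time piece of $K_s$ can be far smaller than $|x-y|^{-2-s}$. Your sentences that $p_\tau^\Omega$ is ``concentrated where the ratio is comparable to $|x-y|^{s-t}$'' and that ``both integrals are controlled by $\Omega$-dependent constants'' are assertions, not proofs, and they skip exactly the hard part. The paper instead quotes the two-sided estimate from \cite{CaSt}, Theorem~2.4,
\[
\frac{1}{c_s}\,|x-y|^{2+s} K_s(x,y) \;\le\; \min\!\paren{1,\frac{\eigen{0}(x)\eigen{0}(y)}{|x-y|^2}} \;\le\; c_s\,|x-y|^{2+s} K_s(x,y),
\]
whose middle term is independent of $s$; chaining the right-hand inequality for $t$ with the left-hand inequality for $s$ gives \eqref{relationship Kt and Ks} in one line. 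If you want a self-contained heat-kernel proof, you must import a matching two-sided Dirichlet heat-kernel bound (of the Davies/Zhang type with the $\min(1,d(x)d(y)/\tau)$ factor) and carry the lower bound through the $\tau$-integral; without that ingredient the comparison inequality is not established.
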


\begin{proof}
See \cite{CaSt} Theorems 2.3 and 2.4.  

Theorem 2.4 in \cite{CaSt} does not explicitly state the result \eqref{relationship Kt and Ks}.  However, it does state that for each kernel $K_s$ there exists a constant $c_s$ dependent on $s$ and $\Omega$ such that 
\[ \frac{1}{c_s} |x-y|^{2+s} K_s(x,y) \leq \min\paren{1,\frac{\eigen{0}(x)\eigen{0}(y)}{|x-y|^2}} \leq c_s |x-y|^{2+s} K_s(x,y). \]
Since the middle term does not depend on $s$, we can say that
\[ |x-y|^{2+t} K_t(x,y) \leq c_t c_s |x-y|^{2+s} K_s(x,y) \]
from which \eqref{relationship Kt and Ks} follows.  
\end{proof}

From the explicit formulae given in \cite{CaSt}, we see that $K_{2s}$ is approximately equal to the standard kernel for the $\R^2$ fractional Laplacian $(-\Laplace)^s$ when both $x$ and $y$ are in the interior of $\Omega$ or when $x$ and $y$ are extremely close together, but decays to zero when one point is in the interior and the other is near the boundary.  The kernel $B_{2s}$ is well-behaved in the interior but has a singularity at the boundary $\del \Omega$.  This justifies our thinking of the $K_{2s}$ term as the interior term and $B_{2s}$ as a boundary term.  

When comparing the computations in this paper to corresponding computations on $\R^2$, one finds that the interior term behaves nearly the same as in the unbounded case, while the boundary term behaves roughly like a lower order term (in the sense that it is easily localized).  

Many useful results can be derived from Caffarelli-Stinga representation formula.  We summarize them in the following lemma.  

\begin{lemma} \label{thm:Lambda stuff}
Let $\Omega \subseteq \R^2$ be a bounded open set with $C^{2,\beta}$ boundary for some $\beta \in (0,1)$.  

\begin{enumerate}[(a)]
\item \label{thm:disjoint} Let $s \in (0,1)$.  If $f$ and $g$ are non-negative functions with disjoint support (i.e. $f(x)g(x) = 0$ for all $x$), then 
\[ \int \Lambda^s f \Lambda^s g \,dx \leq 0. \]

\item \label{thm:product rule} Let $s \in (0,1)$.  If $g \in C^{0,1}(\Omega)$ then for some constant $C = C(s)$ independent of $\Omega$
\[ \norm{fg}_{\HD^s} \leq 2 \norm{g}_\infty \norm{f}_{\HD^s} + C \norm{f}_2 \sup_y \int \frac{|g(x)-g(y)|^2}{|x-y|^{2+2s}} \,dx. \]

\item \label{thm:extra product rule} Let $s \in (0,1)$.  If $g \in C^{0,1}(\Omega)$ then for some constant $C=C(s)$ independent of $\Omega$
\[  \norm{fg}_{\HD^s} \leq C \norm{g}_{C^{0,1}(\Omega)} \paren{\norm{f}_2 + \norm{f}_{\HD^s}}. \]

\item \label{thm:L1 of Lambda bounded} Let $s\in(0,1/2)$.  Let $g$ an $L^\infty(\Omega)$ function and $f \in \HD^{2s}$ be non-negative with compact support.  Let $\Comega$ be a constant such that
\begin{equation} \label{K bounded between orders} K_s(x,y) \leq \Comega |x-y|^{3s} K_{4s}(x,y). \end{equation}

Then there exists a constant $C$ depending only on $s$ and $\Comega$ such that
\[ \int \Lambda^{s/2} g \Lambda^{s/2} f \leq C \norm{g}_\infty |\supp(f)|^{1/2} \paren{ \norm{f}_2 + \norm{f}_{\HD^{2s}}}. \]

\item \label{thm:L1 of Lambda1/4 bounded} Let $g$ an $L^\infty(\Omega)$ function and $f \in \HD^{1/2}$ be non-negative with compact support.  Let $\Comega$ be a constant such that
\[ K_{1/4}(x,y) \leq \Comega |x-y|^{3/4} K_{1}(x,y). \]

Then there exists a constant $C$ depending only on $\Comega$ such that
\[ \int g \Lambda^{1/4} f \leq C \norm{g}_\infty |\supp(f)|^{1/2} \paren{ \norm{f}_2 + \norm{f}_{\HD^{1/2}}}. \]

\end{enumerate}
\end{lemma}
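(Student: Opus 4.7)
The plan is to reduce all five parts to applications of the Caffarelli-Stinga representation of Proposition~\ref{thm:Caff Stinga representation}, which converts each $\Lambda^s$-bilinear form into an interior double integral against the nonnegative kernel $K$ plus a boundary integral against the nonnegative density $B$. The workhorses will be the universal pointwise bound $K_{2s}(x,y)\leq C(s)|x-y|^{-(2+2s)}$, the kernel comparison \eqref{relationship Kt and Ks} (and its specialization \eqref{K bounded between orders}), the nonnegativity $B\geq 0$, and the symmetry of $K$ in its two arguments.

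Parts (\ref{thm:disjoint})--(\ref{thm:extra product rule}) are relatively direct. For (\ref{thm:disjoint}), the disjoint supports give $fg\equiv 0$ and $[f(x)-f(y)][g(x)-g(y)] = -f(x)g(y)-f(y)g(x)\leq 0$, while $\int fgB_{2s}=0$. For (\ref{thm:product rule}), I split $(fg)(x)-(fg)(y) = f(x)[g(x)-g(y)]+g(y)[f(x)-f(y)]$ and use $(a+b)^2\leq 2a^2+2b^2$: one piece, together with the boundary term $\int(fg)^2 B_{2s}$, is bounded by a constant times $\norm{g}_\infty^2\norm{f}_{\HD^s}^2$, while the other piece uses the pointwise bound on $K_{2s}$ and Fubini (exploiting symmetry in $x,y$ to exchange the role of the supremum variable) to produce the supremum term; taking a square root yields (\ref{thm:product rule}). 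Part (\ref{thm:extra product rule}) then follows by plugging into (\ref{thm:product rule}) the estimate $\sup_y\int|g(x)-g(y)|^2|x-y|^{-(2+2s)}dx\lesssim \norm{g}_{C^{0,1}}^2$, which is a standard split at $|x-y|=1$ (Lipschitz near-field integrable since $s<1$, $L^\infty$ far-field integrable since $s>0$) and is $\Omega$-independent.

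Parts (\ref{thm:L1 of Lambda bounded}) and (\ref{thm:L1 of Lambda1/4 bounded}) are the main content, with (\ref{thm:L1 of Lambda1/4 bounded}) reducing to (\ref{thm:L1 of Lambda bounded}) at $s=1/4$ via the self-adjointness identity $\int g\Lambda^{1/4}f = \int\Lambda^{1/8}g\cdot\Lambda^{1/8}f$ (valid even for $g\in L^\infty$ since by the Caffarelli-Stinga representation at order $1/8$ both sides equal $\iint[g(x)-g(y)][f(x)-f(y)]K_{1/4}+\int fgB_{1/4}$). For (\ref{thm:L1 of Lambda bounded}), apply the representation to get the interior integral against $K_s$ plus $\int fgB_s$. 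The key structural observation is that $f(x)-f(y)$ vanishes unless $x\in A$ or $y\in A$, where $A:=\supp f$; by symmetry of $K_s$ the interior integral is bounded by twice the one restricted to $\{x\in A\}$. On this restricted set, I split $K_s = \sqrt{K_{4s}}\cdot K_s/\sqrt{K_{4s}}$ and apply Cauchy-Schwarz: the first factor gives $\norm{f}_{\HD^{2s}}$ via the representation at order $2s$; the second uses $|g(x)-g(y)|\leq 2\norm{g}_\infty$, the hypothesis $K_s^2/K_{4s}\leq \Comega|x-y|^{3s}K_s$, and the pointwise bound on $K_s$ to reduce to a bounded integral of $|x-y|^{2s-2}$ on $A\times\Omega$, yielding the desired $|A|^{1/2}$ factor after Fubini. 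The boundary piece is controlled by $\norm{g}_\infty|A|^{1/2}\paren{\int f^2 B_s}^{1/2}\leq \norm{g}_\infty|A|^{1/2}\norm{f}_{\HD^{s/2}}$ (using the representation at order $s/2$), and interpolation of $\HD^{s/2}$ between $L^2$ and $\HD^{2s}$ completes the bound by $\norm{g}_\infty|A|^{1/2}(\norm{f}_2+\norm{f}_{\HD^{2s}})$.

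The main obstacle will be (\ref{thm:L1 of Lambda bounded}): the Cauchy-Schwarz weights must be chosen precisely so that restricting the interior integral to $\{x\in\supp f\}$ produces the clean factor $|\supp f|^{1/2}$, and the kernel comparison \eqref{K bounded between orders} must absorb any residual domain dependence into $\Comega$. Handling the boundary density $B_s$, which is singular near $\partial\Omega$, is delicate but is resolved by tying it back to $\norm{f}_{\HD^{s/2}}$ through the representation itself and then interpolating against $\norm{f}_{\HD^{2s}}$.
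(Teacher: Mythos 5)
Your proposal tracks the paper closely in parts (a)--(c) and in the reduction of (e) to (d) with $s=1/4$; the decomposition for (b) is cosmetically different (you split off $f(x)[g(x)-g(y)]+g(y)[f(x)-f(y)]$ vs.\ the paper's $g(x)[f(x)-f(y)]+f(y)[g(x)-g(y)]$), but these are interchangeable by the $x\leftrightarrow y$ symmetry of the integrand. Your treatment of the boundary term in (d) is a genuine variant: you control $\int fgB_s$ by Cauchy--Schwarz directly to get $\norm{g}_\infty|\supp f|^{1/2}\big(\int f^2 B_s\big)^{1/2}\leq \norm{g}_\infty|\supp f|^{1/2}\norm{f}_{\HD^{s/2}}$ and then interpolate $\HD^{s/2}$ between $L^2$ and $\HD^{2s}$; the paper instead uses the monotonicity trick $[f(x)-f(y)][\indic{f>0}(x)-\indic{f>0}(y)]\geq 0$ to replace $\int \indic{f>0} f B_s$ by $\int \indic{f>0}\Lambda^s f$. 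Both reach $|\supp f|^{1/2}(\norm{f}_2+\norm{f}_{\HD^{2s}})$ and neither has an advantage, so that difference is inessential.

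The gap is in the interior integral of (d). You apply Cauchy--Schwarz globally with the split $K_s=\sqrt{K_{4s}}\cdot K_s/\sqrt{K_{4s}}$, pair the second factor with $|g(x)-g(y)|\leq 2\norm{g}_\infty$, and invoke $K_s^2/K_{4s}\leq\Comega|x-y|^{3s}K_s\leq C(s)\Comega\,|x-y|^{2s-2}$. But for $s\in(0,1/2)$ the exponent $2s-2$ lies in $(-2,-1)$, so $\int_\Omega|x-y|^{2s-2}\,dy$ is finite only because $\Omega$ is bounded, and its size is of order $(\operatorname{diam}\Omega)^{2s}$. The constant you would obtain therefore depends on $\operatorname{diam}\Omega$, not only on $s$ and $\Comega$ as the lemma asserts. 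This matters: the statement's constant must be scale-invariant, since in Lemma~\ref{thm:scaling} the relation $K_{1/4}\leq\Comega|x-y|^{3/4}K_1$ is preserved (with the same $\Comega$) on the dilated domains $\Omega_\eps$, while $\operatorname{diam}\Omega_\eps=\eps^{-1}\operatorname{diam}\Omega\to\infty$; a diameter-dependent constant would ruin the De Giorgi iteration in Sections~\ref{sec:de giorgi}--\ref{sec:holder}. The paper's proof avoids this by splitting the interior integral at $|x-y|=1$: the kernel comparison $K_s\leq\Comega|x-y|^{3s}K_{4s}$ is only used on $\{|x-y|<1\}$, where the resulting kernel $|x-y|^{6s}K_{4s}\indic{|x-y|<1}\lesssim|x-y|^{2s-2}\indic{|x-y|<1}$ is integrable over all of $\R^2$ with an $s$-only constant; on $\{|x-y|\geq 1\}$ one abandons Cauchy--Schwarz entirely and bounds $|f(x)-f(y)|\leq|f(x)|+|f(y)|$ against the directly integrable $K_s\indic{|x-y|\geq 1}\lesssim|x-y|^{-2-s}\indic{|x-y|\geq1}$, producing $\norm{g}_\infty\norm{f}_1\leq\norm{g}_\infty|\supp f|^{1/2}\norm{f}_2$. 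You need this near/far dichotomy; the global Cauchy--Schwarz route cannot be salvaged using only the hypotheses of the lemma.
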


\begin{proof}
We prove these corollaries one at a time.  

\textbf{Proof of \eqref{thm:disjoint}:}
From Proposition \ref{thm:Caff Stinga representation}
\[ \int \Lambda^s f \Lambda^s g \,dx = \iint [f(x)-f(y)][g(x)-g(y)] K(x,y) \,dxdy + \int f(x) g(x) B(x) \,dx. \]

Since $f$ and $g$ are non-negative and disjoint, the $B$ term vanishes.  Moreover, the product inside the $K$ term becomes
\[ [f(x)-f(y)][g(x)-g(y)] = -f(x)g(y)-f(y)g(x) \leq 0. \]
Since $K$ is non-negative, the result follows.  

\vskip.3cm
\textbf{Proof of \eqref{thm:product rule}:}
From Proposition \ref{thm:Caff Stinga representation}
\[ \int |\Lambda^s (fg)|^2 = \iint \paren{g(x)[f(x)-f(y)] + f(y)[g(x)-g(x)]}^2 K + \int f^2 g^2 B \]
\[ \leq 2 \norm{g}_\infty^2 \norm{f}_{\HD^s}^2 + C(s) \int f(y)^2 \int \frac{|g(x)-g(y)|^2}{|x-y|^{2+2s}} \,dxdy. \]

\vskip.3cm
\textbf{Proof of \eqref{thm:extra product rule}:}
This follows immediately from \eqref{thm:product rule}, since
\[ |g(x)-g(y)| \leq \paren{\norm{g}_\infty} \wedge \paren{\norm{\grad g}_\infty |x-y|} \]
and 
\[  \int \frac{1 \wedge |x-y|^2}{|x-y|^{2+2s}} \,dx \]
is bounded uniformly in $y$.  

\vskip.3cm
\textbf{Proof of \eqref{thm:L1 of Lambda bounded}:}
From Proposition \ref{thm:Caff Stinga representation} we can decompose
\[ \int \Lambda^{s/2} g \Lambda^{s/2} f = \Rom{1}_< + \Rom{1}_\geq + \Rom{2} \]
where
\begin{align*} 
\Rom{1}_< &:= \iint_{|x-y| < 1} [g(x)-g(y)][f(x)-f(y)] K_s, \\
\Rom{1}_\geq &:= \iint_{|x-y|\geq 1} [g(x)-g(y)][f(x)-f(y)] K_s, \\
\Rom{2} &:= \int f g B_s. 
\end{align*}

First we estimate $\Rom{1}_<$.  From \eqref{K bounded between orders} and from the symmetry of the integrand and the fact that $[f(x)-f(y)]$ vanishes unless at least one of $f(x)$ or $f(y)$ is non-zero,
\[ \abs{\Rom{1}_<} \leq 2 \iint_{|x-y| < 1} \indic{f > 0}(x) \abs{g(x)-g(y)} \cdot \abs{f(x)-f(y)} \cdot |x-y|^{3s} K_{4s}. \]
We can break this up by H\"{o}lder's inequality
\[ \abs{\Rom{1}_<} \leq 2 \paren{\iint_{|x-y| < 1} \indic{f > 0}(x) [g(x)-g(y)]^2 |x-y|^{6s} K_{4s} }^{1/2} \paren{\iint [f(x)-f(y)]^2 K_{4s} }^{1/2}. \]
The kernel $|x-y|^{6s} K_{4s} \indic{|x-y| < 1}$ is integrable in $y$ for $x$ fixed.  Therefore
\begin{equation} \label{K near of L1 less than L2 stuff} \abs{\Rom{1}_<} \leq 2 \paren{ \paren{2\norm{g}_\infty}^2 \int C \indic{f > 0}(x) \,dx }^{1/2} \paren{ \norm{f}_{\HD^{2s}}^2}^{1/2}. \end{equation}
\vskip.3cm

For the term $\Rom{1}_\geq$, by the symmetry of the integrand we have
\[ \abs{\Rom{1}_\geq} \leq  2\norm{g}_\infty 2\int |f(x)| \int_{|x-y|\geq 1} K_s(x,y)dy \,dx. \]
Since $K_s \indic{|x-y|\geq 1}$ is integrable in $y$ for $x$ fixed,
\begin{equation} \label{K far of L1 less than L2 stuff}
\abs{\Rom{1}_\geq} \leq C \norm{g}_\infty \norm{f}_1.  
\end{equation}
\vskip.3cm

For the boundary term \Rom{2}, 
\[ \abs{\Rom{2}} \leq \norm{g}_\infty \int\indic{f>0} f B_s. \]
Since $f \geq 0$, $[f(x)-f(y)][\indic{f > 0}(x) - \indic{f > 0}(y)] \geq 0$.  Therefore
\[ \int \indic{f > 0} f B_s \leq \int \Lambda^{s/2} \indic{f>0} \Lambda^{s/2} f = \int \indic{f>0} \Lambda^{s} f. \]
Applying H\"{o}lder's inequality, we arrive at
\[ \abs{\Rom{2}} \leq \norm{g}_\infty |\supp(f)|^{1/2} \norm{f}_{\HD^{s}}. \]
This combined with \eqref{K near of L1 less than L2 stuff} and \eqref{K far of L1 less than L2 stuff} gives us 
\[ \int \Lambda^{s/2} g \Lambda^{s/2} f \leq C \norm{g}_\infty \paren{|\supp(f)|^{1/2} \norm{f}_{\HD^{2s}} + \norm{f}_1 + |\supp(f)|^{1/2} \norm{f}_{\HD^s}}. \]

The lemma follows since $\norm{f}_1 \leq |\supp(f)|^{1/2} \norm{f}_2$ and since $\norm{f}_{\HD^s} \leq \norm{f}_{L^2} + \norm{f}_{\HD^{2s}}$.  

\vskip.3cm
\textbf{Proof of \eqref{thm:L1 of Lambda1/4 bounded}:}
This is an immediate application of part \eqref{thm:L1 of Lambda bounded}.  

\end{proof}

Let us consider the relationship between the norm $\HD^s$ and the $H^s$ norm on $\R^2$.  

It is known (see \cite{CoIg.sqg} and \cite{CaSt}) that for $s \in (0,1)$ the spaces $\HD^s$ are equivalent to certain subsets of $H^s(\Omega)$ spaces defined in terms of the Gagliardo semi-norm.  In particular, we know that smooth functions with compact support are dense in $\HD^s$ for $s \in [0,1]$ and that elements of $\HD^s$ have trace zero for $s \in [\frac{1}{2},1]$.  
\vskip.3cm

The most important fact for us is that the fractional Sobolev norms defined in terms of extension are dominated by our $\HD^s$ norm with a constant that is independent of $\Omega$.  

We do not claim that this result is new, but we present a detailed proof because the result is crucial to the De Giorgi method.  The De Giorgi lemmas require Sobolev embeddings and Rellich-Kondrachov embeddings which are independent of scale.  
\vskip.3cm

Define the extension-by-zero operator $E:L^2(\Omega) \to L^2(\R^2)$
\[ E f (x) = \begin{cases} f(x) & x \in \Omega, \\ 0 & x \in \R^2 \setminus \Omega. \end{cases} \]

\begin{proposition} \label{thm:hadamard 3 lines}
Let $\Omega \subseteq \R^2$ be any bounded open set with $C^{2,\beta}$ boundary for some $\beta \in (0,1)$.  For any $s \in [0,1]$ and function $f \in \HD^s$,
\[ \int_{\R^2} \abs{\paren{-\Laplace}^{s/2} E f}^2 \leq \int_\Omega \abs{\Lambda^s f}^2. \]
Here $\paren{-\Laplace}^s$ is defined in the fourier sense.  
\end{proposition}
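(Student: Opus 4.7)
The title of the proposition and the $[0,1]$ range for $s$ strongly suggest complex interpolation via the Hadamard three lines theorem. The plan is to set up an analytic family of operators $T_z$ for $z$ in the closed strip $\{0 \leq \Re z \leq 1\}$ that agrees with the inequality at $z=s$, and to verify the endpoint bounds at $\Re z = 0$ and $\Re z = 1$ with constant $1$.

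First I would establish the two endpoint cases directly. For $s=0$ the extension $E$ is an $L^2$-isometry, so both sides of the claimed inequality equal $\|f\|_{L^2(\Omega)}^2$. For $s=1$ one uses that $\HD^1$ coincides with $H_0^1(\Omega)$ (noted just before the statement), so extension by zero sends $\HD^1$ into $H^1(\R^2)$ with $\grad Ef = E(\grad f)$ distributionally. Then
\[ \int_{\R^2} |(-\Laplace)^{1/2} Ef|^2 = \int_{\R^2} |\grad Ef|^2 = \int_\Omega |\grad f|^2 = \int_\Omega |\Lambda f|^2, \]
where the last equality is integration by parts for $f \in H_0^1$. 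Next I define, for finite linear combinations of eigenfunctions $g = \sum_k g_k \eigen{k}$, the operator
\[ T_z g := (-\Laplace)^{z/2} E \Lambda^{-z} g = \sum_k \lambda_k^{-z/2} g_k \, (-\Laplace)^{z/2} E\eigen{k}, \]
where $(-\Laplace)^{z/2}$ on $\R^2$ is the Fourier multiplier $|\xi|^z$. For $h \in \Ctest(\R^2)$ the pairing
\[ F(z) := \langle T_z g, h \rangle_{L^2(\R^2)} = \sum_k \lambda_k^{-z/2} g_k \int_{\R^2} |\xi|^z \widehat{E\eigen{k}}(\xi)\,\overline{\hat h(\xi)}\,d\xi \]
is a finite sum of holomorphic functions of $z$, continuous up to the boundary, with growth at most $e^{C|\Im z|}$ (driven by $\lambda_k^{-z/2} = \lambda_k^{-s/2}e^{-i(\Im z/2)\log\lambda_k}$ and $|\xi|^{it}$). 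On $\Re z = 0$ both $\Lambda^{it}$ and $(-\Laplace)^{it/2}$ are unitary and $E$ is an isometry, so $|F(it)| \leq \|g\|_{L^2}\|h\|_{L^2}$. On $\Re z = 1$ the computation at $s=1$ above, combined with the unitarity of the imaginary powers, likewise gives $|F(1+it)| \leq \|g\|_{L^2}\|h\|_{L^2}$.

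The Hadamard three lines theorem then yields $|F(s)| \leq \|g\|_{L^2(\Omega)}\|h\|_{L^2(\R^2)}$ for $s \in [0,1]$. Taking the supremum over $h \in \Ctest(\R^2)$ with unit norm, setting $f := \Lambda^{-s} g$, and extending from the dense subspace of finite spectral sums to all of $\HD^s$ gives the desired inequality. The main technical obstacle is verifying that the family $T_z$ genuinely satisfies the hypotheses of three lines: namely checking analyticity and the correct subexponential growth of $F$ in $|\Im z|$ within the strip, which requires being careful about using Schwartz/compactly supported test data $h$ so that $\hat h$ compensates for any polynomial growth of $|\xi|^{\Re z}$, and keeping the spectral sum defining $g$ finite so that no convergence issue arises from the factor $\lambda_k^{-z/2}$. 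A minor subsidiary point is the identification $\HD^1 = H_0^1(\Omega)$ needed for the $s=1$ endpoint, which is a standard consequence of the spectral theory of $-\Laplace_D$ on $C^{2,\beta}$ domains.
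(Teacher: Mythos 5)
Your proposal is correct and uses essentially the same argument as the paper: complex interpolation via the Hadamard three-lines theorem between the $L^2$ endpoint (where $E$ is an isometry and the purely imaginary fractional powers $(-\Laplace)^{it/2}$, $\Lambda^{it}$ are unitary) and the $\HD^1 = H_0^1$ endpoint (where $\norm{(-\Laplace)^{1/2}Ef}_{L^2(\R^2)} = \norm{\grad f}_{L^2(\Omega)} = \norm{\Lambda f}_{L^2(\Omega)}$). The only cosmetic difference is that you work with finite spectral sums $g$ and test functions $h$ so that holomorphy of $F(z)$ is immediate and then pass to the limit by density, whereas the paper fixes $f \in \HD^s$, tests against Schwartz $g$, and verifies differentiability of $\Phi$ by differentiating under the integral — both handle the same technical obstacle you correctly identified.
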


We will prove this proposition by interpolating between $s = 0$ and $s=1$.  Before we can do this, we must prove the same in the $s=1$ case.  This result is known (see e.g. Jerison and Kenig \cite{JeKe}) but we include the proof for completeness.  

\begin{lemma} \label{thm:H1 and H1}
Let $\Omega \subseteq \R^2$ be any bounded open set with Lipschitz boundary.  For all functions $f$ in $\HD^1$,
\[ \int_\Omega \abs{\grad f}^2 = \int_\Omega \abs{\Lambda f}^2. \]
\end{lemma}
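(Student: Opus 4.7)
The plan is to exploit the spectral definitions of both norms directly: $\Lambda$ is defined via the eigenfunctions $\eigen{k}$ of $-\Laplace_D$, while the right-hand side is exactly $\sum_k \lambda_k f_k^2$, so the identity should follow by reducing to eigenfunctions and then passing to the limit.

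First, I would verify the identity on finite linear combinations of eigenfunctions. Each $\eigen{k}$ lies in $H_0^1(\Omega)$ (by the variational characterization of Dirichlet eigenfunctions) and satisfies $-\Laplace \eigen{k} = \lambda_k \eigen{k}$ in the weak sense, so for any $j,k$ the weak formulation gives
\[ \int_\Omega \grad \eigen{j} \cdot \grad \eigen{k} \,dx = \lambda_k \int_\Omega \eigen{j}\eigen{k} \,dx = \lambda_k \delta_{jk}. \]
Hence for $f^N = \sum_{k=0}^N f_k \eigen{k}$,
\[ \int_\Omega \abs{\grad f^N}^2 = \sum_{k=0}^N \lambda_k f_k^2 = \int_\Omega \abs{\Lambda f^N}^2, \]
which is the desired identity at the finite level.

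Next I would pass to the limit. Given $f \in \HD^1$, i.e.\ $\sum_k \lambda_k f_k^2 < \infty$, the same identity applied to $f^N - f^M$ shows
\[ \int_\Omega \abs{\grad(f^N - f^M)}^2 = \sum_{k=M+1}^N \lambda_k f_k^2 \xrightarrow[M,N\to\infty]{} 0, \]
so $(f^N)$ is Cauchy in $H_0^1(\Omega)$ and converges there to some $g \in H_0^1(\Omega)$. Since $f^N \to f$ in $L^2(\Omega)$ by the spectral theorem, $g = f$ almost everywhere, which in particular shows $f \in H_0^1(\Omega)$ and $\grad f^N \to \grad f$ in $L^2$. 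Taking $N \to \infty$ in the finite-sum identity yields
\[ \int_\Omega \abs{\grad f}^2 = \lim_{N\to\infty} \sum_{k=0}^N \lambda_k f_k^2 = \int_\Omega \abs{\Lambda f}^2. \]

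The main subtlety, and the only place the boundary hypothesis is used, is justifying the integration-by-parts identity for eigenfunctions: this uses that the $\eigen{k}$ genuinely lie in $H_0^1(\Omega)$, which is built into the definition of $-\Laplace_D$ on a Lipschitz domain, so no further regularity of $\del\Omega$ is needed here. Everything else is formal manipulation of the spectral expansion and a standard Cauchy-sequence argument.
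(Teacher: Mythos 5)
Your proof is correct and follows essentially the same route as the paper: establish the gradient orthogonality $\int_\Omega \grad\eigen{i}\cdot\grad\eigen{j} = \lambda_j\delta_{ij}$, verify the identity on finite partial sums, and pass to the limit using that $\sum_k \lambda_k f_k^2 < \infty$ controls the $H_0^1$ norm. Your justification of the orthogonality via the weak formulation (rather than the paper's appeal to the divergence theorem) is in fact the cleaner argument on a merely Lipschitz domain, and your explicit Cauchy-in-$H_0^1$ step is what the paper leaves implicit in the remark that ``the following sums all converge in $L^2(\Omega)$.''
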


\begin{proof}
Let $\eigen{i}$ and $\eigen{j}$ be two eigenfunctions of the Dirichlet Laplacian on $\Omega$.  Note that these functions are smooth in the interior of $\Omega$ and vanish at the boundary, so we can apply the divergence theorem and find
\[ \int \grad \eigen{i} \cdot \grad \eigen{j} = - \int \eigen{i} \Laplace \eigen{j} = \lambda_j \int \eigen{i} \eigen{j} = \lambda_j \delta_{i=j}. \]

Consider a function $f = \sum f_k \eigen{k}$ which is an element of $\HD^1$, by which we mean $\sum \lambda_k f_k^2 < \infty$.  Since $\norm{\grad \eigen{k}}_{L^2(\Omega)} = \sqrt{\lambda_k}$, the following sums all converge in $L^2(\Omega)$ and hence the calculation is justified:
\begin{align*}
\int \abs{\grad f}^2 &= \int \paren{\sum_i f_i \grad \eigen{i} } \paren{\sum_j f_j \grad \eigen{j}}
\\ &= \int \sum_{i,j} (f_i f_j) \grad \eigen{i} \cdot \grad \eigen{j}
\\ &= \sum_{i,j} (f_i f_j) \int \grad \eigen{i} \cdot \grad \eigen{j}
\\ &= \sum_j \lambda_j f_j^2.
\end{align*}
From this the result follows.
\end{proof}

We come now to the proof of Proposition \ref{thm:hadamard 3 lines}.  The proof is by complex interpolations using the Hadamard three-lines theorem.  
\begin{proof}
Let $g$ be any Schwartz function in $L^2(\R^2)$, and let $f$ be a function in $\HD^s$.  
Define the function
\[ \Phi(z) = \int_{\R^2} \paren{-\Laplace}^{z/2} g E \Lambda^{s-z} f, \qquad z \in \C, \Re(z) \in [0,1]. \]

Recall (see e.g. \cite{JeKe}) that when $t \in \R$, $(-\Laplace)^{i t}$ is a unitary transformation on $L^2(\R^2)$, and $\Lambda^{i t}$ is a unitary transformation on $L^2(\Omega)$.  

When $\Re(z) = 0$, then $\norm{\paren{-\Laplace}^{z/2} g}_2 = \norm{g}_2$ and $\norm{\Lambda^{s-z} f}_2 = \norm{f}_{\HD^s}$.  Hence
\[ \Phi(z) \leq \norm{g}_2 \norm{f}_{\HD^s}, \qquad \Re(z)=0. \]

When $\Re(z)=1$, integrate by parts to obtain
\[ \Phi(z) = \int_{\R^2} \paren{-\Laplace}^{(z-1)/2} g  \paren{-\Laplace}^{1/2} E \Lambda^{s-z} f. \]
Then $\norm{\paren{-\Laplace}^{(z-1)/2} g}_2 = \norm{g}_2$, while $\norm{\Lambda^{s-z} f}_{\HD^1} = \norm{f}_{\HD^s}$.  As an $\HD^1$ function, $\Lambda^{s-z} f$ has trace zero so 
\[ \norm{\grad E \Lambda^{s-z} f}_{L^2(\R^2)} = \norm{\grad \Lambda^{s-z} f}_{L^2(\Omega)} = \norm{f}_{\HD^s}. \]
Of course $\norm{\paren{-\Laplace}^{1/2} \,\cdot\,}_{L^2(\R^2)} = \norm{\grad \,\cdot\,}_{L^2(\R^2)}$ in general so
\[ \Phi(z) \leq \norm{g}_2 \norm{f}_{\HD^s}, \qquad \Re(z)=1. \]
\vskip.3cm

In order to apply the Hadamard three-lines theorem, we must show that $\Phi$ is differentiable in the interior of its domain.  

Rewrite the integrand of $\Phi$ as
\[ \Four\n\paren{ |\xi|^z \hat{g} } E \sum_k \lambda_k^{\frac{s-z}{2}} f_k. \]
The derivative $\ddz$ commutes with linear operators like $\Four\n$ and $E$, so the derivative is
\begin{equation} \label{derivative of integrand} \Four\n\paren{ \ln(|\xi|) |\xi|^z \hat{g} } E \sum_k \lambda_k^{\frac{s-z}{2}} f_k + \Four\n\paren{ |\xi|^z \hat{g} } E \sum_k \frac{-1}{2} \ln(\lambda_k) \lambda_k^{\frac{s-z}{2}} f_k. \end{equation}

Fix some $z \in \C$ with $\Re(z) \in (0,1)$.  Since $g$ is a Schwartz function, $\ln(|\xi|)|\xi|^z \hat{g}$ is in $L^2$.  Moreover, for any $\eps>0$ we have $\ln(\lambda_k) \lambda_k^{\frac{s-z}{2}} \leq C \lambda_k^{\frac{s-z+\eps}{2}}$ for some $C$ independent of $k$ but dependent on $z$, $\eps$.  Take $\eps < \Re(z)$ and, since $f \in \HD^{s}$, this sum will converge in $L^2$.  

The differentiated integrand \eqref{derivative of integrand} is therefore a sum of two products of $L^2$ functions.  In particular it is integrable, which means we can interchange the integral sign and the derivative $\ddz$ and prove that $\Phi'(z)$ is finite for all $0<\Re(z)<1$. 
\vskip.3cm

By the Hadamard three-lines theorem, for any $z \in (0,1)$ we have $\Phi(z) \leq \norm{g}_2 \norm{f}_{\HD^s}$.  
Evaluating $\Phi(s)$, we see
\[ \int_{\R^2} \paren{-\Laplace}^{s/2} g E f \leq \norm{g}_{L^2(\R^2)} \norm{f}_{\HD^s}. \]
This inequality holds for any Schwartz function $g \in L^2(\R^n)$ and any $f \in \HD^{s}$.  

Since Schwartz functions are dense in $L^2(\R^2)$ and $(-\Laplace)^{s/2}$ is self-adoint, the proof is complete.  
\end{proof}

\vskip1cm
\section{Existence of suitable solutions} \label{sec:suitable}

In this section, we define the needed notion of suitable solutions.  This involves two families of localized energy inequalities.  The first family \eqref{leray-hopf condition} concerns the time evolution of $\int_\Omega (\theta - \Psi)_+^2$ for generic cutoff functions $\Psi$.  We need also to control the time derivative $\del_t (\theta-\Psi)_+^2$ in the sense of distributions for the second De Giorgi lemma (see Proposition~\ref{thm:DG2}, step 2).  This control comes in the family of inequalities \eqref{local condition}.  

It is important that the universal constant $\Csuit$ appearing in the suitability conditions \eqref{leray-hopf condition} and \eqref{local condition} is independent of $\Omega$.  The De Giorgi argument requires that we apply the same bound iteratively as we rescale the solution, so our bounds must be scale independent.  For this reason, we will define the constant through Proposition~\ref{thm:suitability} before stating the definition of suitable solutions.  As with the Navier-Stokes equations, it is not obvious that weak solutions constructed directly from the Galerkin scheme are suitable.  Therefore we will construct our weak solutions as vanishing viscosity limits of
\begin{equation} \label{eq:viscous} \begin{cases}
\del_t \theta + u\cdot \grad \theta + \Lambda \theta = \eps \Laplace \theta & (0,\infty) \times \Omega,
\\ u = \grad^\perp \Lambda\n \theta & [0,\infty) \times \Omega,
\\ \theta = \theta_0 & \{0\} \times \Omega.
\end{cases} \end{equation}
The construction of solutions to \eqref{eq:viscous} will follow the Galerkin method (as in \cite{CoIg.fraclap}).  
\vskip.3cm

We begin by defining the universal constant $\Csuit$ and simultaneously showing that the inequalities \eqref{leray-hopf condition} and \eqref{local condition} are valid for sufficiently smooth solutions to the linear equation
\begin{equation} \label{eq:viscous linear}
\begin{cases}
\del_t \theta + u \cdot \grad \theta + \Lambda \theta = \eps \Laplace \theta, \\
\div u = 0,
\end{cases}
\end{equation}
uniformly with respect to $\eps \in [0,1]$.  This smoothness requirement will be shown to be valid when $\eps > 0$.  

\begin{proposition}[Energy Inequalities] \label{thm:suitability}
There exists a universal constant $\Csuit$ such that the following holds:

Let $\Omega \subseteq \R^2$ be bounded and open with $C^{2,\beta}$ boundary, $\beta \in (0,1)$, and let $0 < T < \infty$ a time, and let $\eps \in [0,1]$.  Let $\theta, u$ be a solution to \eqref{eq:viscous linear} on $\Omega \times [0,T]$, with $\theta \in L^\infty(0,T; L^2(\Omega)) \cap L^2(0,T; H_0^1(\Omega))$ and $u \in L^\infty(0,T; L^2(\Omega)) \cap L^4(0,T; L^4(\Omega))$.  

Then for any smooth non-negative function $\Psi \in C^\infty([0,\infty)\times \R^2)$ satisfying $\norm{\grad\Psi}_{L^\infty([0,\infty)\times\R^2)} \leq k$ and the H\"{o}lder seminorm $\sup_{[0,\infty)} \bracket{\Psi(t,\cdot)}_{1/4; \R^2} \leq k$ for some constant $k$, any time $S \in (0,T)$, and any smooth non-negative $\varphi \in \Ctest(S,T;C^\infty(\Omega))$, the function $\theta_+ := \paren{\theta - \Psi}_+$ satisfies
\begin{equation}\label{leray-hopf condition}
\ddt \int \theta_+^2 + \int \abs{\Lambda^{1/2} \theta_+}^2 \leq \Csuit \paren{ k^2 \int \indic{\theta \geq \Psi} + \abs{\int \theta_+ (\del_t \Psi + u\cdot\grad\Psi)} } \qquad \forall t \in [0,T]
\end{equation}
in the sense of distributions and
\begin{equation} \label{local condition} 
\begin{aligned} 
\frac{-1}{2} \int_S^T\!\!\!\!\int \theta_+^2 \del_t \varphi &\leq \frac{1}{2} \int_S^T\!\!\!\!\int  \theta_+^2 u \cdot \grad \varphi - \int_S^T\!\!\!\!\int \varphi \theta_+ \paren{\del_t \Psi + u \cdot \grad \Psi} 
+ \Csuit \norm{\varphi}_{C^0(S,T;C^2)} \bigg( \paren{1\!\!+\!\!\frac{1}{S}} \int_0^T\!\!\!\!\int \theta_+^2 
\\ & \hskip2cm + k^2 \int_0^T\!\!\!\!\int \indic{\theta\geq\Psi} + \int_0^T\! \abs{\int \theta_+ \paren{\del_t \Psi + u\cdot\grad\Psi}} \bigg).
\end{aligned}
\end{equation}
\end{proposition}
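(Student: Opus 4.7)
The plan is to derive both inequalities by the standard energy method: multiply \eqref{eq:viscous linear} by a suitable test function and carefully estimate the nonlocal dissipation using the Caffarelli--Stinga kernel representation of Proposition~\ref{thm:Caff Stinga representation}. Throughout I set $\theta_+ := (\theta - \Psi)_+$ and $\theta_- := (\theta - \Psi)_-$, so $\theta = \Psi + \theta_+ - \theta_-$ pointwise. For \eqref{leray-hopf condition} I would test \eqref{eq:viscous linear} against $\theta_+$. The time derivative yields $\tfrac12 \tfrac{d}{dt}\int\theta_+^2$ plus an $\int \del_t\Psi\cdot\theta_+$ term that passes to the RHS. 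The transport splits into $\int u\cdot\grad(\theta-\Psi)\cdot\theta_+ = \tfrac12\int u\cdot\grad\theta_+^2 = 0$ (using $\operatorname{div} u = 0$ and $\theta_+|_{\del\Omega} = 0$, since $\theta$ has trace zero and $\Psi\ge 0$) plus an $\int u\cdot\grad\Psi\cdot\theta_+$ piece that also goes to the RHS. The viscous term produces $-\epsilon\int|\grad\theta_+|^2$ plus a $\grad\Psi\cdot\grad\theta_+$ cross term, which Young's inequality absorbs at the cost of $C\epsilon k^2|\{\theta>\Psi\}|$, uniformly in $\epsilon\in[0,1]$.

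The heart of the argument is the dissipation $\int\Lambda\theta\cdot\theta_+$. I would apply Proposition~\ref{thm:Caff Stinga representation} with $s=1/2$ and split $\theta(x)-\theta(y) = [\theta_+(x)-\theta_+(y)] - [\theta_-(x)-\theta_-(y)] + [\Psi(x)-\Psi(y)]$. The first piece reproduces $\int|\Lambda^{1/2}\theta_+|^2$; the second piece is non-negative by Lemma~\ref{thm:Lambda stuff}\eqref{thm:disjoint} (disjoint supports); the third piece I handle by Cauchy--Schwarz together with the pointwise bound
\[ \int [\Psi(x) - \Psi(y)]^2 K_1(x,y) \, dy \leq Ck^2, \]
obtained by splitting the $y$-integral at $|x-y| = 1$, using $\norm{\grad\Psi}_\infty \leq k$ for short distances and the $1/4$-H\"older seminorm for long distances, combined with $K_1(x,y) \leq C|x-y|^{-3}$ from Proposition~\ref{thm:Caff Stinga representation}. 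Young's inequality then absorbs half of $\int|\Lambda^{1/2}\theta_+|^2$ into the LHS and leaves an error of size $Ck^2 |\{\theta > \Psi\}|$; the boundary term $\int\theta\theta_+ B_1 = \int\theta_+^2 B_1 + \int\Psi\theta_+ B_1 \geq 0$ is simply dropped. All constants are universal because the estimates on $K_1$ and $B_1$ in Proposition~\ref{thm:Caff Stinga representation} are $\Omega$-independent, giving \eqref{leray-hopf condition} with a universal $\Csuit$.

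For \eqref{local condition} I would test \eqref{eq:viscous linear} against $\theta_+\varphi$ and integrate on $[S,T]\times\Omega$. Standard integrations by parts and the computations of the previous paragraph reproduce the LHS and the first two RHS terms of \eqref{local condition}, together with an $O(\norm{\varphi}_{C^2}(k^2\iint\chi + \iint\theta_+^2))$ contribution from the viscous part. For the dissipation I use the symmetric identity
\[ \theta_+(x)\varphi(x) - \theta_+(y)\varphi(y) = \tfrac{\varphi(x)+\varphi(y)}{2}[\theta_+(x) - \theta_+(y)] + \tfrac{\theta_+(x) + \theta_+(y)}{2}[\varphi(x) - \varphi(y)]. \]
The first piece is handled exactly as in the proof of \eqref{leray-hopf condition}, weighted by the non-negative factor $\tfrac{\varphi(x)+\varphi(y)}{2}$. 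The second, commutator piece I bound by Cauchy--Schwarz together with the analogous pointwise bound $\int [\varphi(x)-\varphi(y)]^2 K_1(x,y)\,dy \leq C\norm{\varphi}_{C^{0,1}}^2$ (proved by the same short/long distance split using $\norm{\grad\varphi}_\infty$ and $\norm{\varphi}_\infty$) and a second expansion of $\theta(x)-\theta(y)$; a further Young produces an error of size $\eta\int|\Lambda^{1/2}\theta_+|^2 + C_\eta\norm{\varphi}_{C^2}^2\paren{k^2\int\chi + \int\theta_+^2}$. The leftover $\int_S^T\int|\Lambda^{1/2}\theta_+|^2$ cannot be absorbed locally, so I close the estimate by multiplying \eqref{leray-hopf condition} by a cutoff $\zeta(t)$ equal to $1$ on $[S,T]$ and linear on $[0,S]$ and integrating in time, which yields
\[ \int_S^T\!\!\!\int |\Lambda^{1/2}\theta_+|^2 \leq \tfrac{1}{S} \int_0^T\!\!\!\int \theta_+^2 + \Csuit \int_0^T \paren{k^2\!\int\chi + \abs{\int \theta_+ \paren{\del_t\Psi + u\cdot\grad\Psi}}}; \]
this accounts for both the $(1 + 1/S)$ weight and the final source integral on the RHS of \eqref{local condition}.

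The main obstacle will be the dissipation commutator for \eqref{local condition}: splitting $\theta_+(x)\varphi(x) - \theta_+(y)\varphi(y)$ and then $\theta(x)-\theta(y)$ in such a way that every resulting term is either absorbed, is manifestly small, or is controlled by the $\Omega$-independent kernel bounds of Proposition~\ref{thm:Caff Stinga representation}; and then closing the loop by borrowing the $\int|\Lambda^{1/2}\theta_+|^2$ control from \eqref{leray-hopf condition}. A secondary technical point is justifying $\theta_+\varphi \in \HD^{1/2}$ so that Proposition~\ref{thm:Caff Stinga representation} applies to $\int\Lambda\theta\cdot\theta_+\varphi$, which follows from the product estimate Lemma~\ref{thm:Lambda stuff}\eqref{thm:extra product rule}.
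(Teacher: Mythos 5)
Your argument for \eqref{leray-hopf condition} matches the paper closely: decompose $\theta = \Psi + \theta_+ - \theta_-$, drop the transport by divergence-freeness, discard the $\theta_-$ inner product by disjoint supports, drop the boundary term $B_1\geq 0$, and estimate $\iint[\theta_+(x)-\theta_+(y)][\Psi(x)-\Psi(y)]K_1$ by the near/far split using the Lipschitz and $1/4$-H\"older bounds on $\Psi$. The treatment of the viscous term and the weighted-cutoff argument to get the $\frac1S$ factor in \eqref{dissipation controlled by earlier in time} are likewise the same.

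For \eqref{local condition} your decomposition differs from the paper's and, as written, leaves a genuine gap. The paper first writes $\theta = \theta_+ + \Psi - \theta_-$, producing three separate pairings $\int\Lambda^{1/2}(\varphi\theta_+)\Lambda^{1/2}\theta_+$, $\int\Lambda^{1/2}(\varphi\theta_+)\Lambda^{1/2}\Psi$, and $\int\Lambda^{1/2}(\varphi\theta_+)\Lambda^{1/2}\theta_-$. The last one is discarded because the \emph{full} factor $[\varphi\theta_+(x)-\varphi\theta_+(y)]$ paired with $[\theta_-(x)-\theta_-(y)]$ has a definite sign by disjoint supports (each cross-product $\varphi(x)\theta_+(x)\theta_-(y)$ appears with a fixed sign); the first is then handled wholesale by the product estimate Lemma~\ref{thm:Lambda stuff}\eqref{thm:extra product rule}. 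You instead split $\varphi\theta_+(x)-\varphi\theta_+(y)$ symmetrically \emph{before} expanding $\theta(x)-\theta(y)$, and then ``a second expansion of $\theta(x)-\theta(y)$'' inside the commutator piece generates the cross-term
\[
-\iint \frac{\theta_+(x)+\theta_+(y)}{2}\,[\varphi(x)-\varphi(y)]\,[\theta_-(x)-\theta_-(y)]\,K_1 .
\]
Unlike the full $\theta_-$ pairing, this sub-term has no sign: on the set $\theta_+(x)>0$, $\theta_+(y)=0$ it reduces to $\theta_+(x)[\varphi(x)-\varphi(y)]\,\theta_-(y)\,K_1$, and $\theta_-$ carries no $L^2$ or $L^\infty$ control, so Cauchy--Schwarz and Young do not close. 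The remedy is to recombine your two $\theta_-$ sub-terms back into the single signed pairing $-\iint[\varphi\theta_+(x)-\varphi\theta_+(y)][\theta_-(x)-\theta_-(y)]K_1\leq 0$ (i.e.\ do not split the $\theta_-$ contribution symmetrically), or to follow the paper and separate $\theta_+$, $\Psi$, $\theta_-$ before touching $\varphi\theta_+$. Everything else in your sketch -- the pointwise kernel bounds, the $u\cdot\grad\varphi$ term, the viscous part, and the borrowing of $\int|\Lambda^{1/2}\theta_+|^2$ from \eqref{leray-hopf condition} -- is consistent with the paper's argument.
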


\begin{remark}
Note that since $\Csuit$ is universal, Proposition~\ref{thm:suitability} does not depend on the values of $\norm{\theta}_{L^\infty(L^2)}$, $\norm{\theta}_{L^2(H_0^1)}$, $\norm{u}_{L^\infty(L^2)}$, or $\norm{u}_{L^4(L^4)}$, but only on the fact that these quantities are finite.  Therefore, using the natural scaling of \eqref{eq:viscous linear}, if $(\theta, u)$ verify the assumptions of Proposition~\ref{thm:suitability} on $[0,T]\times\Omega$, then so does $\big(\lambda \theta(\mu \cdot, \mu \cdot), u(\mu\cdot, \mu\cdot)\big)$ on $[0,\mu\n T] \times \mu\n \Omega$, for any $\lambda \in \R$ and $\mu > 0$ such that $\mu\n \eps \in [0,1]$.  Therefore the proposition applies also to these scaled functions, with the same universal constant $\Csuit$.  
\end{remark}

\begin{proof}[Proof of Proposition \ref{thm:suitability}]
Since $\theta_+ \in L^\infty(0,T; L^2(\Omega)) \cap L^2(0,T; H_0^1(\Omega))$, we can multiply \eqref{eq:viscous linear} by $\theta_+$ and integrate in space to obtain
\[ 0 = \int \theta_+ \bracket{ \del_t + u \cdot \grad + \Lambda - \eps \Laplace} \paren{\theta_+ + \Psi - \theta_-} \]
which decomposes into three terms, corresponding to $\theta_+$, $\Psi$, and $\theta_-$.  We analyze them one at a time.  

Firstly,
\begin{align*} 
\int \theta_+ \bracket{ \del_t + u \cdot \grad + \Lambda -\eps\Laplace} \theta_+ &= \paren{\frac{1}{2}} \ddt \int \theta_+^2 + \paren{\frac{1}{2}} \int \div u \, \theta_+^2 + \int \abs{\Lambda^{1/2} \theta_+}^2 + \eps \int \abs{\grad\theta_+}^2
\\ &= \paren{\frac{1}{2}} \ddt \int \theta_+^2 + \int \abs{\Lambda^{1/2} \theta_+}^2 + \eps \int \abs{\grad\theta_+}^2.
\end{align*}

The $\Psi$ term produces important error terms:
\begin{align*} 
\int \theta_+ \bracket{ \del_t + u \cdot \grad + \Lambda -\eps\Laplace} \Psi &= \int \theta_+ \del_t \Psi + \int \theta_+ u \cdot \grad \Psi + \int \Lambda^{1/2} \theta_+ \Lambda^{1/2} \Psi + \eps \grad\theta_+ \cdot \grad\Psi
\\ &= \int \theta_+ (\del_t \Psi + u \cdot \grad \Psi) + \int \Lambda^{1/2} \theta_+ \Lambda^{1/2} \Psi + \eps \grad\theta_+ \cdot \grad\Psi.
\end{align*}

Since $\theta_+$ and $\theta_-$ have disjoint support, the $\theta_-$ term is nonnegative by Lemma~\ref{thm:Lambda stuff} part \eqref{thm:disjoint}:
\begin{align*} 
\int \theta_+ \bracket{ \del_t + u \cdot \grad + \Lambda } \theta_- &= \paren{\frac{1}{2}} \int \theta_+ \del_t \theta_- + \int \theta_+ u \cdot \grad \theta_- + \int \Lambda^{1/2} \theta_+ \Lambda^{1/2} \theta_- + \eps\int \grad\theta_+ \grad\theta_-
\leq 0.
\end{align*}

Put together, we arrive at 
\begin{equation} \label{first form of cacciopolli} \paren{\frac{1}{2}} \ddt \int \theta_+^2 + \int \abs{\Lambda^{1/2} \theta_+}^2 + \int \Lambda^{1/2} \theta_+ \Lambda^{1/2} \Psi \leq -\int \theta_+ (\del_t \Psi + u \cdot \grad \Psi) - \eps \bracket{\int \grad\theta_+ \cdot \grad\Psi + \int \abs{\grad\theta_+}^2}. \end{equation}

The $\eps$ term is bounded, using the fact that $\grad\theta_+ = \indic{\theta_+ > 0} \grad\theta_+$ and $\eps \in [0,1]$, by
\begin{equation} \label{eps term for leray hopf} \begin{aligned}
-\eps\bracket{\int \abs{\grad\theta_+}^2 + \int \grad\theta_+ \cdot \grad\Psi} &\leq \frac{-\eps}{2} \int |\grad\theta_+|^2 + \frac{\eps}{2} \int |\grad\Psi|^2 \indic{\theta_+ > 0}^2
\\ &\leq \frac{k^2}{2} \int \indic{\theta_+ > 0}.
\end{aligned} \end{equation}

At this point we break down the $\Lambda^{1/2} \theta_+ \Lambda^{1/2} \Psi$ term using the formula from Proposition~\ref{thm:Caff Stinga representation}.  
\[ \int \Lambda^{1/2} \theta_+ \Lambda^{1/2} \Psi = \iint [\theta_+(x)-\theta_+(y)][\Psi(x)-\Psi(y)] K(x,y) + \int \theta_+ \Psi B. \]
Since $B \geq 0$ and $\Psi$ is non-negative by assumption, the $B$ term is non-negative and so
\begin{equation} \label{just the K cacciopolli} \int \Lambda^{1/2} \theta_+ \Lambda^{1/2} \Psi \geq \iint [\theta_+(x)-\theta_+(y)][\Psi(x)-\Psi(y)] K(x,y). \end{equation}
The remaining integral is symmetric in $x$ and $y$, and the integrand is only nonzero if at least one of $\theta_+(x)$ and $\theta_+(y)$ is nonzero.  Hence
\[ \abs{\iint [\theta_+(x)-\theta_+(y)][\Psi(x)-\Psi(y)] K(x,y)} \leq 2 \iint \indic{\theta_+>0}(x) \abs{\theta_+(x)-\theta_+(y)} \cdot \abs{\Psi(x)-\Psi(y)} K(x,y). \]
Now we can break up this integral using Young's inequality, and since $\iint [\theta_+(x)-\theta_+(y)]^2K \leq \norm{\theta_+}_{\HD^{1/2}}^2$ the inequality \eqref{just the K cacciopolli} becomes
\begin{equation} \label{good lower bound on psi term cacciopoli}
\int \Lambda^{1/2} \theta_+ \Lambda^{1/2} \Psi + \frac{1}{2} \int \abs{\Lambda^{1/2}\theta_+}^2 \geq - 2 \iint \indic{\theta_+>0}(x) [\Psi(x)-\Psi(y)]^2 K(x,y). 
\end{equation}

It remains to bound the quantity $[\Psi(x)-\Psi(y)]^2 K(x,y)$.  By Proposition~\ref{thm:Caff Stinga representation}, there is a universal constant $C$ such that
\[ K(x,y) \leq \frac{C}{|x-y|^{3}}. \]
The cutoff $\Psi$ is locally Lipschitz, and H\"{o}lder continuous with exponent $1/4$, by assumption.  Therefore 
\[ [\Psi(x)-\Psi(y)]^2 K(x,y) \leq Ck^2 |x-y|^{-1} \wedge |x-y|^{-2.5}. \]
Since $1 < 2 < 2.5$, this quantity is integrable.  Thus
\[ \int \indic{\theta_+>0}(x) \int [\Psi(x)-\Psi(y)]^2 K(x,y) \,dydx \leq C k^2 \int \indic{\theta_+>0} \,dx. \]
Combining this with \eqref{first form of cacciopolli}, \eqref{good lower bound on psi term cacciopoli}, and \eqref{eps term for leray hopf} we obtain \eqref{leray-hopf condition}.  

\vskip.3cm

We begin now the proof of \eqref{local condition}.  Since $\theta_+ \in L^\infty(0,T; L^2(\Omega)) \cap L^2(0,T; H_0^1(\Omega))$, by interpolation we can further conclude $\theta_+,u \in L^4(0,T; L^4(\Omega))$.  Therefore we can multiply \eqref{eq:viscous linear} by $\varphi \theta_+$ and integrate in space to obtain
\[ 0 = \int \varphi \theta_+ \bracket{ \del_t + u \cdot \grad + \Lambda } \paren{\theta_+ + \Psi - \theta_-} \]
which decomposes into three terms, corresponding to $\theta_+$, $\Psi$, and $\theta_-$.  After rearranging and integrating by parts, this becomes
\begin{equation}\label{equality with varphi}
\frac{1}{2} \int \varphi \del_t \theta_+^2 = \frac{1}{2} \int  \theta_+^2 u \cdot \grad \varphi - \int \varphi \theta_+ \paren{\del_t \Psi + u \cdot \grad \Psi} - \int \varphi \theta_+ \Lambda \theta_+ - \int \varphi \theta_+ \Lambda \Psi + \eps \int \varphi \theta_+ \Laplace (\theta_+ + \Psi). 
\end{equation}

The $\eps$ term decomposes as 
\begin{equation} \begin{aligned} \label{eps term for local}
\eps \int \varphi \theta_+ \Laplace (\theta_+ + \Psi) &= -\eps \int \varphi \grad \theta_+ \cdot \grad (\theta_+ + \Psi) - \eps \int \theta_+ \grad \varphi \cdot \grad (\theta_+ + \Psi)
\\ &= -\eps \int \varphi \abs{\grad \theta_+}^2 - \eps \int \varphi \grad \theta_+ \cdot \grad \Psi + \frac{\eps}{2} \int \theta_+^2 \Laplace \varphi - \eps \int \theta_+ \grad\varphi \cdot \grad \Psi
\\ &\leq  \frac{\eps}{2} \int \varphi \abs{\grad\Psi}^2 \indic{\theta_+ > 0} + \frac{\eps}{2} \int \theta_+^2 \Laplace \varphi + \frac{\eps}{2} \int \theta_+^2 |\grad\varphi| + \frac{\eps}{2} \int \indic{\theta_+ > 0} |\grad \varphi| |\grad \Psi|^2
\\ &\leq k^2 \norm{\varphi}_{C^1} \int \indic{\theta_+ > 0} + \norm{\varphi}_{C^2} \int \theta_+^2.
\end{aligned} \end{equation}

The $\int \varphi \theta_+ \Lambda \theta_+$ term is bounded by Lemma~\ref{thm:Lambda stuff} part \eqref{thm:extra product rule}
\begin{equation} \label{phi disrupting theta_+ theta_+}
- \int \varphi \theta_+ \Lambda \theta_+ \leq C \norm{\varphi}_{C^1} \paren{ \int \theta_+^2 + \int \abs{\Lambda^{1/2} \theta_+}^2 }
\end{equation}
and the $\int \varphi \theta_+ \Lambda \Psi$ term is bounded, just as for the $\int \theta_+ \Lambda \Psi$ term in the previous family of inequalities but with the addition of Lemma~\ref{thm:Lambda stuff} part \eqref{thm:extra product rule},
\begin{equation} \label{phi disrupting Lambda Psi} \begin{aligned}
- \int \varphi \theta_+ \Lambda \Psi &\leq \iint [\varphi(x) \theta_+(x) - \varphi(y) \theta_+(y)][\Psi(x)-\Psi(y)] K_1
\\ &\leq 2\iint \indic{\theta_+>0} |\varphi(x) \theta_+(x) - \varphi(y) \theta_+(y)|\cdot|\Psi(x)-\Psi(y)| K_1
\\ &= 2 \iint \paren{ \norm{\varphi}_{C^1}^{-1/2} |\varphi(x) \theta_+(x) - \varphi(y) \theta_+(y)|} \paren{\norm{\varphi}_{C^1}^{1/2} \indic{\theta_+ > 0} |\Psi(x)-\Psi(y)|} K_1
\\ &\leq \norm{\varphi}_{C^1}\n \norm{\varphi \theta_+}_{\HD^{1/2}}^2 + \norm{\varphi}_{C^1} \iint \indic{\theta_+>0} [\Psi(x)-\Psi(y)]^2 K_1
\\ &\leq C \norm{\varphi}_{C^1} \paren{\int \theta_+^2 + \int \abs{\Lambda^{1/2}\theta_+}^2 } + C k^2 \norm{\varphi}_{C^1} \paren{\int_{\R^2} |y|^{-1} \wedge |y|^{-2.5} \,dy} \int \indic{\theta_+>0}.
\end{aligned} \end{equation}

From the inequality \eqref{leray-hopf condition} already proven, we can obtain by a standard argument
\begin{equation} \label{dissipation controlled by earlier in time} 
\int_S^T\!\! \int \abs{\Lambda^{1/2} \theta_+}^2 \leq \frac{1}{S} \int_0^T\!\! \int \theta_+^2 + k^2 \int_0^T\!\! \int \indic{\theta_+>0} + \int_0^T\! \abs{\int \theta_+ \paren{\del_t \Psi + u\cdot\grad\Psi}}. 
\end{equation}

By combining \eqref{equality with varphi} with \eqref{eps term for local}, \eqref{phi disrupting theta_+ theta_+}, and \eqref{phi disrupting Lambda Psi} we obtain
\[ \frac{1}{2} \int \varphi \del_t \theta_+^2 = \frac{1}{2} \int  \theta_+^2 u \cdot \grad \varphi - \int \varphi \theta_+ \paren{\del_t \Psi + u \cdot \grad \Psi} + C \norm{\varphi}_{C^2} \paren{ \int \theta_+^2 + k^2 \int \indic{\theta_+>0} + \int \abs{\Lambda^{1/2}\theta_+}^2 }. \]
Integrating this inequality from $S$ to $T$ and applying \eqref{dissipation controlled by earlier in time}, we obtain \eqref{local condition}.  
\end{proof}
\vskip.3cm

We will construct global-in-time solutions to \eqref{eq:main linear} (equivalently \eqref{eq:viscous linear} with $\eps = 0$) for any initial value $\theta_0 \in L^2$ which verify these energy inequalities \eqref{leray-hopf condition} and \eqref{local condition} with the same universal constant $\Csuit$ at all scales, but which may not be in $L^2(H_0^1)$.  
\begin{definition}
A pair $\theta, u$ is called a \textbf{suitable solution} to \eqref{eq:main linear} on $[0,\infty) \times \Omega$ if $\Omega \subseteq \R^2$ open and bounded, $\theta,u \in L^\infty(\R_+;L^2(\Omega))$, $\theta \in L^2(\R_+;\HD^{1/2}(\Omega))$, $u \in L^3(\R_+; L^3(\Omega))$ and $\theta, u$ is a suitable solution on $[0,T]\times\Omega$ for all $0 < T < \infty$.  

A pair $\theta, u$ is called a \textbf{suitable solution} to \eqref{eq:main linear} on a space time domain $[0,T]\times \Omega$ if $T <\infty$, $\Omega \subseteq\R^2$ open and bounded,  $\theta,u \in L^\infty(0,T;L^2(\Omega))$, $\theta \in L^2(0,T;\HD^{1/2}(\Omega))$, $u \in L^3(0,T; L^3(\Omega))$ and
\begin{enumerate}
\item $\theta$, $u$ solve \eqref{eq:main linear} in the sense of distributions on $[0,T]\times\Omega$, \\
\item $\theta$, $u$ satisfy \eqref{leray-hopf condition} and \eqref{local condition} at all scales with the same universal constant $\Csuit$ defined in Proposition~\ref{thm:suitability}.  More specifically, the following holds:

Let $\lambda \in \R$ and $\mu \in (0,1)$ be given and let $\Psi \in C^\infty([0,\infty)\times \R^2)$ be any smooth non-negative function satisfying $\norm{\grad\Psi}_{L^\infty([0,\infty)\times\R^2)} \leq k$ and $\sup_{[0,\infty)} \bracket{\Psi(t,\cdot)}_{1/4; \R^2} \leq k$ for some constant $k$.  

Define $\tilde{\Omega} := \{x \in \R^2: \mu x \in \Omega\}$, $\tilde{T}:= \mu\n T$, $\tilde{\theta}_+(t,x) := (\lambda \theta(\mu t, \mu x)-\Psi(t, x))_+$, and $\tilde{u}(t,x):= u(\mu t, \mu x)$.  Let $S \in (0,\tilde{T})$ and let $\varphi \in \Ctest(S,T;C^\infty(\tilde{\Omega}))$ be non-negative.  

Then $\tilde{\theta}_+$ and $\tilde{u}$ and $\varphi$ and $\Psi$ satisfy \eqref{leray-hopf condition} and \eqref{local condition} on $\tilde{\Omega}$ with times 0, $S$ and $\tilde{T}$, with the same universal constant $\Csuit$.  
\end{enumerate}
\end{definition}


The rest of this section is dedicated to the proof of the following proposition:

\begin{proposition}[Existence of global suitable solutions] \label{thm:existence}
There exists a universal constant $C > 0$ such that the following holds:

Given an open, bounded domain $\Omega \subseteq \R^2$ with $C^{2,\beta}$ boundary, $\beta \in (0,1)$, and initial data $\theta_0 \in L^2(\Omega)$, there exists a global-in-time weak solution $\theta$ to \eqref{eq:main nonlinear} such that, for any $0 < T < \infty$, $\theta$ and $u := \grad^\perp \Lambda^{-1} \theta$ are a suitable solution to \eqref{eq:main linear} on $[0,T] \times \Omega$.  

Moreover, $\theta \in L^\infty([0,\infty);L^2(\Omega)) \cap L^2([0,\infty);\HD^{1/2}(\Omega))$, and $\theta(t,\cdot) \to \theta_0(\cdot)$ weakly in $L^2(\Omega)$ as $t \to 0$, and for any $S > 0$
\[ \norm{\theta}_{L^\infty([S,\infty)\times\Omega)} \leq \frac{C}{S} \norm{\theta_0}_{L^2(\Omega)}. \]
\end{proposition}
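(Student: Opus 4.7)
The plan is to construct $\theta$ as a vanishing-viscosity limit of solutions to \eqref{eq:viscous} (as outlined just before the proposition), verify that the inequalities \eqref{leray-hopf condition} and \eqref{local condition} pass to the limit and are scale-invariant, and then prove the $L^\infty$ bound by a De Giorgi iteration applied to \eqref{leray-hopf condition}.

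For the first step, for each $\eps\in(0,1)$ I would construct a global smooth solution $\theta^\eps$ of \eqref{eq:viscous} via a Galerkin scheme in the basis $\{\eigen{k}\}$, as in \cite{CoIg.fraclap}. The $\eps\Laplace$ term provides enough coercivity that $\theta^\eps\in L^\infty_t L^2_x\cap L^2_t H^1_0$, and since $\grad^\perp\Lambda\n$ is bounded on $L^p$ for $1<p<\infty$, $u^\eps:=\grad^\perp\Lambda\n\theta^\eps\in L^4_t L^4_x$. These regularity classes are exactly the hypotheses of Proposition~\ref{thm:suitability}, so the energy inequalities \eqref{leray-hopf condition} and \eqref{local condition} hold for $(\theta^\eps,u^\eps)$ with the $\eps$-independent universal constant $\Csuit$.

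To pass to the limit $\eps\to 0$, the energy bound
\[ \tfrac{1}{2}\norm{\theta^\eps(t)}_{L^2}^2+\int_0^t\norm{\theta^\eps}_{\HD^{1/2}}^2\,ds\leq \tfrac{1}{2}\norm{\theta_0}_{L^2}^2, \]
which follows from \eqref{leray-hopf condition} with $\Psi\equiv 0$, is uniform in $\eps$, giving weak-$\ast$ convergence in $L^\infty_t L^2_x$ and weak convergence in $L^2_t\HD^{1/2}$ along a subsequence. The equation provides a bound on $\del_t\theta^\eps$ in a sufficiently negative Sobolev space (using the divergence form $u^\eps\cdot\grad\theta^\eps=\div(u^\eps\theta^\eps)$), and an Aubin--Lions argument upgrades this to strong $L^2_\loc$ convergence of $\theta^\eps$, which suffices (by boundedness of $\grad^\perp\Lambda\n$ on $L^2$) to pass to the limit in $u^\eps\cdot\grad\theta^\eps$. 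The suitability inequalities descend by lower-semicontinuity of the quadratic dissipation and mass terms, together with dominated convergence (using $\grad\Psi,\del_t\Psi\in L^\infty$) for the linear cutoff and drift terms, all with the same universal $\Csuit$. Scale invariance is built in: for any $\lambda\in\R$ and $\mu\in(0,1)$, the rescaling $\big(\lambda\theta(\mu\cdot,\mu\cdot),u(\mu\cdot,\mu\cdot)\big)$ is the vanishing-viscosity limit of rescaled viscous solutions with viscosity $\mu\n\eps\in[0,1]$, to which Proposition~\ref{thm:suitability} applies with the same $\Csuit$.

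For the $L^\infty$ bound, I apply a De Giorgi iteration to \eqref{leray-hopf condition} with constant cutoff $\Psi\equiv K_n:=K(1-2^{-n})$ (so $\grad\Psi=\del_t\Psi=0$ and the right-hand side vanishes), yielding $\ddt\int(\theta-K_n)_+^2+\int\abs{\Lambda^{1/2}(\theta-K_n)_+}^2\leq 0$. Fix $S>0$, set $t_n:=S(1-2^{-n})$, and define
\[ V_n:=\int_{t_n}^{2S}\int(\theta-K_n)_+^2, \qquad U_n:=\sup_{t\in[t_n,2S]}\norm{(\theta-K_n)_+(t)}_{L^2}^2+\int_{t_n}^{2S}\norm{(\theta-K_n)_+}_{\HD^{1/2}}^2. \]
Averaging the energy inequality in the initial time $s\in[t_{n-1},t_n]$ gives $U_n\leq C\,2^n V_{n-1}/S$. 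The $\Omega$-independent Sobolev embedding $\HD^{1/2}(\Omega)\hookrightarrow L^4(\Omega)$ (from Proposition~\ref{thm:hadamard 3 lines} and 2D Sobolev on $\R^2$), combined with interpolation $\norm{f}_{L^3}\leq\norm{f}_{L^2}^{1/3}\norm{f}_{L^4}^{2/3}$ and the $L^\infty_t L^2_x$ bound, yields $\norm{(\theta-K_n)_+}_{L^3_{t,x}}^2\leq C U_n$. Chebyshev on $\{\theta>K_n\}\subseteq\{(\theta-K_{n-1})_+>K 2^{-n}\}$ bounds the space-time support measure by $4^n V_{n-1}/K^2$, and H\"older closes the iteration:
\[ V_n\leq \frac{C\,2^{5n/3}}{S K^{2/3}}\,V_{n-1}^{4/3}. \]
The standard De Giorgi iteration lemma shows $V_n\to 0$ provided $V_0\leq c(SK^{2/3})^3$ for a small universal $c$. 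Since $V_0\leq 2S\norm{\theta_0}_{L^2}^2$, the choice $K=C_1\norm{\theta_0}_{L^2}/S$ with $C_1$ a large universal constant suffices, giving $\theta\leq K$ on $[S,2S]\times\Omega$. Shifting the initial time to $\tau=kS$ and using $\norm{\theta(\tau)}_{L^2}\leq\norm{\theta_0}_{L^2}$ extends the bound to $[S,\infty)$, and applying the same argument to $-\theta$ gives the two-sided bound. The most delicate step I anticipate is controlling the nonlinear term $\int\theta_+^2 u\cdot\grad\varphi$ in \eqref{local condition} through the limit, which requires the strong $L^2_\loc$ convergence of $\theta^\eps$ together with the strong $L^2_\loc$ convergence of $u^\eps$; the weak trace $\theta(t)\weakly\theta_0$ at $t=0$ then follows from the standard weak time-continuity of the weak solution inherited from the uniform $L^\infty_t L^2_x$ bound and the distributional equation.
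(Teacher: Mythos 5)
Your plan has the right ingredients, and your De Giorgi iteration arithmetic closes correctly (the recursion $V_n\leq C\,2^{5n/3}\,S^{-1}K^{-2/3}\,V_{n-1}^{4/3}$ and the choice $K\sim\norm{\theta_0}_{L^2}/S$ are both sound), but the \emph{order of operations} has a genuine circularity. You propose to pass \eqref{local condition} to the limit first and only afterwards prove the $L^\infty$ bound on the limit $\theta$. But the cubic term $\int_S^T\!\int\theta_+^2\,u\cdot\grad\varphi$ does not pass to the limit under only strong $L^2_\loc$ convergence of $\theta^\eps$ and $u^\eps$, contrary to what you assert. The uniform control available before the limit is $L^\infty_t L^2_x\cap L^2_t\HD^{1/2}$, which by interpolation and $\HD^{1/2}\hookrightarrow L^4$ gives only $L^3_{t,x}$ boundedness for each of $\theta^\eps,u^\eps$; but $L^2$-strong plus $L^3$-bounded yields strong convergence only in $L^{3-\delta}$, and a product of three factors in $L^{3-\delta}$ is not integrable. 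So the limit passage in the critical nonlinear term of \eqref{local condition} genuinely breaks down at this level of integrability.

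The paper resolves exactly this by proving the $L^\infty$ bound $\norm{\theta^\eps(t,\cdot)}_{L^\infty}\leq C t^{-1}\norm{\theta_0}_{L^2}$ \emph{for the viscous solutions themselves}, uniformly in $\eps$ (this is the content of Lemma~\ref{thm:viscous}, using precisely the kind of De Giorgi argument you outline, plus a comparison with a constant supersolution to extend from $[\mu,2\mu]$ to $[\mu,\infty)$). Armed with that, on $[S,T]$ with $S>0$ one has $\theta^\eps$ uniformly bounded in $L^\infty$, hence strong $L^3_{t,x}$ convergence, and the cubic term passes to the limit. The $L^\infty$ bound on $\theta$ then comes for free as the pointwise limit, and no separate iteration on the limit is needed.

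The fix to your proposal is minor: run your De Giorgi iteration on $\theta^\eps$ using \eqref{leray-hopf condition} (which, as you correctly note, holds for the viscous solutions by Proposition~\ref{thm:suitability} with the same $\Csuit$) \emph{before} taking $\eps\to 0$, rather than after. Everything else in your argument --- the Galerkin step, the Aubin--Lions compactness (the paper uses $\del_t\theta^\eps$ bounded in $L^2(\HD^{-3/2})$), the lower semicontinuity of the quadratic terms in \eqref{leray-hopf condition}, the scale-invariance observation, and the weak trace at $t=0$ --- matches the paper's route.
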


To construct global suitable solutions, we will use the vanishing viscosity method.  First, we must prove existence of global weak solutions to \eqref{eq:viscous}.  

\begin{lemma}[Existence for viscous equation] \label{thm:viscous}
There exists a universal constant $C$ such that the following holds:

Given an open, bounded domain $\Omega \subseteq \R^2$, initial data $\theta_0 \in L^2(\Omega)$ and a constant $\eps > 0$, there exists a global-in-time weak solution $\theta$ to \eqref{eq:viscous}.  

In particular, $\theta \in C^0([0,\infty);L^2(\Omega)) \cap L^2([0,\infty);H_0^1(\Omega))$, $u \in C^0([0,\infty);L^2(\Omega)) \cap L^4([0,\infty)\times\Omega)$, and $\del_t \in L^2([0,\infty);H^{-1}(\Omega))$, and $\theta(t,\cdot) \to \theta_0(\cdot)$ weakly in $L^2(\Omega)$ as $t \to 0$, and for any $S > 0$
\[ \norm{\theta}_{L^\infty\paren{[S,\infty)\times\Omega}} \leq \frac{C}{S} \norm{\theta_0}_{L^2(\Omega)}. \]
\end{lemma}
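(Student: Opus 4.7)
The plan is a Galerkin approximation of \eqref{eq:viscous} followed by a De Giorgi-type iteration to obtain the $L^\infty$ decay with a universal constant.

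I would first set up the Galerkin scheme $\theta^N(t,x) := \sum_{k=0}^N c_k^N(t)\eigen{k}(x)$ solving
\begin{equation*} \del_t \theta^N + P_N(u^N\cdot\grad\theta^N) + \Lambda\theta^N = \eps\Laplace\theta^N, \qquad \theta^N(0) = P_N\theta_0, \end{equation*}
where $P_N$ denotes the $L^2$-projection onto $\lspan\{\eigen{0},\ldots,\eigen{N}\}$ and $u^N := \grad^\perp\Lambda\n\theta^N$. This is a locally Lipschitz ODE in the coefficients $c_k^N$, and testing against $\theta^N$ (using $\div u^N = 0$ to kill the nonlinear term) produces
\begin{equation*} \tfrac{1}{2}\ddt\norm{\theta^N}_{L^2}^2 + \norm{\Lambda^{1/2}\theta^N}_{L^2}^2 + \eps\norm{\grad\theta^N}_{L^2}^2 = 0, \end{equation*}
yielding global existence of the ODE and uniform bounds in $L^\infty(L^2)\cap L^2(H_0^1)$. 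The equation also gives $\del_t\theta^N \in L^2_\loc(H^{-1})$, the nonlinear term being controlled via $u^N\theta^N \in L^2(L^{4/3})$ through the 2D embedding $\HD^{1/2}\into L^4$ supplied by Proposition~\ref{thm:hadamard 3 lines}.

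Aubin-Lions then provides strong convergence of a subsequence $\theta^N\to\theta$ in $L^2_\loc([0,\infty);L^2(\Omega))$ along with weak compactness in $L^\infty(L^2)\cap L^2(H_0^1)$, which suffices to pass to the limit in all terms. Continuity $\theta\in C^0(L^2)$ and weak attainment of $\theta_0$ are standard consequences of $\theta\in L^2(H_0^1)\cap H^1(H^{-1})$. The bound $u\in C^0(L^2)$ follows since Lemma~\ref{thm:H1 and H1} applied to $\Lambda\n\theta$ yields $\norm{u}_{L^2}=\norm{\theta}_{L^2}$, while $u\in L^4(\R_+\times\Omega)$ comes from $u\in L^\infty(L^2)\cap L^2(H^{1/2})$ together with the 2D Gagliardo-Nirenberg inequality $\norm{u}_{L^4}^2\lesssim\norm{u}_{L^2}\norm{u}_{H^{1/2}}$ (itself a consequence of Proposition~\ref{thm:hadamard 3 lines}).

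For the universal $L^\infty$ decay, I would apply Proposition~\ref{thm:suitability} to the smooth Galerkin solution $\theta^N$ with the constant cutoff $\Psi\equiv M$, for which every $\Psi$-derivative term vanishes; since the viscous contribution only strengthens the estimate, this gives
\begin{equation*} \ddt\int(\theta^N-M)_+^2 + \int\abs{\Lambda^{1/2}(\theta^N-M)_+}^2 \leq 0. \end{equation*}
Combining this with the scale-invariant inequality
\begin{equation*} \norm{(\theta^N-M)_+}_{L^4}^2 \leq C \norm{(\theta^N-M)_+}_{L^2}\norm{\Lambda^{1/2}(\theta^N-M)_+}_{L^2} \end{equation*}
(from Proposition~\ref{thm:hadamard 3 lines} and 2D Sobolev on $\R^2$, with $C$ independent of $\Omega$), I would run the standard parabolic De Giorgi iteration on levels $k_n := M(1-2^{-n})$ and times $t_n := S(1-2^{-n})$, producing a superlinear recursion on
\begin{equation*} U_n := \sup_{t\geq t_n}\int(\theta^N(t)-k_n)_+^2 + \int_{t_n}^\infty\!\!\int\abs{\Lambda^{1/2}(\theta^N-k_n)_+}^2. \end{equation*}
The recursion closes as soon as $M \geq C S\n \norm{\theta_0}_{L^2}$ with $C$ universal; passing to the limit in $N$ and repeating with $-\theta$ gives the stated two-sided bound. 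The main obstacle is precisely this universality: independence of $\eps$ is automatic, but independence of $\Omega$ rests essentially on the zero-extension estimate of Proposition~\ref{thm:hadamard 3 lines}, which reduces the embedding $\HD^{1/2}(\Omega)\into L^4(\Omega)$ to the scale- and domain-invariant $H^{1/2}(\R^2)\into L^4(\R^2)$.
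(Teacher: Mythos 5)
Your Galerkin setup and the energy estimates match the paper's in spirit, but the step that obtains the $L^\infty$ decay has a genuine gap: you propose to apply Proposition~\ref{thm:suitability} (equivalently, a truncated energy inequality with $\Psi \equiv M$) to the Galerkin approximant $\theta^N$ and then send $N\to\infty$. However, $\theta^N$ does not solve \eqref{eq:viscous linear}; it solves the projected equation $\del_t\theta^N + P_N(u^N\cdot\grad\theta^N) + \Lambda\theta^N = \eps\Laplace\theta^N$, so Proposition~\ref{thm:suitability} simply does not apply to it. Nor can you bypass that and test directly: $(\theta^N-M)_+$ is not an element of $W_N=\lspan(\eigen{0},\ldots,\eigen{N})$, so it is not an admissible Galerkin test function, and when you pair the projected equation with it the transport term becomes $\int (\theta^N-M)_+\,P_N(u^N\cdot\grad\theta^N)$, in which the projection $P_N$ no longer drops and the $\div u^N=0$ cancellation is lost. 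The paper handles this by doing things in the opposite order: it first passes to the limit to obtain $\theta\in L^\infty(L^2)\cap L^2(H_0^1)$ with $\del_t\theta\in L^2(H^{-1})$ (a class in which $(\theta-a)_+$ is an admissible multiplier for the honest PDE \eqref{eq:viscous}), and only then runs the truncation and the De Giorgi argument on $\theta$ itself. Reordering your argument accordingly closes the gap.

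Apart from that, your plan for the iteration is a direct parabolic De Giorgi recursion on levels $k_n = M(1-2^{-n})$ and times $t_n = S(1-2^{-n})$; the paper instead rescales $\theta \mapsto \lambda\theta(\mu\cdot,\mu\cdot)$ and invokes the pre-packaged Lemma~\ref{thm:DG1 skeleton}. These are cosmetically different executions of the same estimate and both rest on the domain-independent Sobolev embedding supplied by Proposition~\ref{thm:hadamard 3 lines}. One smaller point worth flagging: you justify $u\in L^4$ via $\norm{u}_{L^4}^2\lesssim\norm{u}_{L^2}\norm{u}_{H^{1/2}}$ "from Proposition~\ref{thm:hadamard 3 lines}," but that proposition concerns the spectral spaces $\HD^s$, and $u=\grad^\perp\Lambda^{-1}\theta$ does not vanish on $\del\Omega$, so $u\notin\HD^{1/2}$ in general; the $L^4$ bound should instead be drawn from $\theta\in L^2(H_0^1)\cap L^\infty(L^2)$ together with the $L^p$-boundedness of the Riesz transform, as the paper does implicitly.
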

The proof of existence is by Galerkin's method, while the $L^\infty$ bound uses a De Giorgi argument.  

\begin{proof}
Recall that $\eigen{j}$ are the eigenfunctions of $-\Laplace_D$.  Let $N$ be an integer parameter, and $W_N := \lspan(\eigen{0},\ldots,\eigen{N})$, which consists only of smooth functions which vanish on $\del\Omega$.  We seek first a solution $\theta_N \in W_N$ to the weak equation
\begin{equation} \label{weak eqn for theta_N}
\int \varphi \del_t \theta_N + \int \varphi \grad^{\perp} \Lambda^{-1} \theta_N \cdot \grad \theta_N + \int \varphi \Lambda \theta_N + \eps \int \grad \theta_N \grad \varphi = 0, \qquad \forall t \in \R_{\geq 0}, \varphi \in W_N. 
\end{equation}

If we write
\[ \theta_N(t,x) := \sum_{i=0}^N \alpha_{i,N}(t) \eigen{i}(x) \]
and choose $\varphi = \eigen{i}$ as a test function, then $\theta_N$ solves \eqref{weak eqn for theta_N}  if and only if, for all $i \leq N$,
\[ \alpha'_{i,N}(t) + \sum_{j=0}^N \sum_{k=0}^N \alpha_{j,N}(t) \alpha_{k,N}(t) B_{ijk} + \lambda_i^{1/2} \alpha_{i,N}(t) + \eps \lambda_i \alpha_{i,N}(t) = 0 \]
with
\[ B_{ijk} = \lambda_j^{-1/2} \int \eigen{i} \grad^{\perp} \eigen{j} \cdot \grad \eigen{k} \]
a constant tensor.  

By Peano's existence theorem for ODEs, solutions to this system exist on some interval $[0,T]$ where $T$ depends on $\Omega$ and $N$ and (since $W_N$ is finite dimensional and all norms are equivalent) the $L^2$ norm of the initial data.  

Since $\theta_N \in W_N$ we can take $\theta_N$ as a test function and obtain, for any solution $\theta_N$ to \eqref{weak eqn for theta_N},
\[ \ddt \int \theta_N^2 + \int \abs{\Lambda^{1/2} \theta_N}^2 + \eps \int \abs{\grad \theta_N}^2 = 0. \]
Therefore in particular $\norm{\theta_N}_{L^2(\Omega)}$ is non-increasing in time and we conclude that $\theta_N$ exists for all time.  Moreover, $\theta_N$ is uniformly bounded in $L^\infty(L^2(\Omega))$ and $L^2(H_0^1(\Omega))$.  
\vskip.3cm

To take a limit in $N$, we need uniform regularity in time.  From \eqref{weak eqn for theta_N} we can bound
\[ \int_0^\infty \int \del_t \theta_N \varphi \leq \norm{\theta_N}_{L^4(L^4)}^2 \norm{\varphi}_{L^2(H_0^1)} + \norm{\theta_N}_{L^2(L^2)} \norm{\varphi}_{L^2(H_0^1)} + \norm{\theta_N}_{L^2(H_0^1)} \norm{\varphi}_{L^2(H_0^1)}. \]
Note that $\theta_N$ is uniformly bounded in $L^4(L^4)$ by interpolation and $L^2(L^2)$ by Poincar\'{e}'s inequality.  Therefore $\iint \varphi \del_t \theta_N \leq C \norm{\varphi}_{L^2(H_0^1)}$ for all $\varphi \in L^2(W_N)$ for a constant $C$ independent of $N$.  Since $\del_t \theta_N \in W_N$, this is sufficient to show that $\norm{\del_t \theta_N}_{L^2(H^{-1})}$ is uniformly bounded.  
\vskip.3cm

By Aubin-Lions, we conclude that $\theta_N$ is a compact sequence in $L^2([0,\infty)\times\Omega)$ and so it has an $L^2$ limit $\theta$.  This limit $\theta$ is in $L^\infty(L^2(\Omega))$ and $L^2(H_0^1(\Omega))$ and $\del_t \theta \in L^2(H^{-1}(\Omega))$.  

We must prove that $\theta$ is a weak solution to \eqref{eq:viscous}.  Let $\varphi \in \Ctest(W_M)$ for some $M$.  For $N \geq M$, 
\[ - \iint \theta_N \del_t\varphi - \iint \theta_N \grad^{\perp} \Lambda^{-1} \theta_N \cdot \grad \varphi + \iint \theta_N \Lambda \varphi - \eps \iint \theta_N \Laplace \varphi = 0. \]
This expression is continuous for $\theta_N \in L^2(L^2)$, so by taking $N \to \infty$ we obtain 
\[ - \int \theta \del_t\varphi - \int \theta \grad^{\perp} \Lambda^{-1} \theta \cdot \grad \varphi + \int \theta \Lambda \varphi + \eps \int \grad \theta \cdot \grad \varphi = 0 \]
for any $\varphi \in \Ctest(W_M)$ for any $M \in \N$.  By density, $\theta$ solves \eqref{eq:viscous} in the sense of distributions.  

Since $\del_t \theta_N$ is uniformly bounded in $L^2(H^{-1})$, we know $\theta_N(t,\cdot) \to \theta_0$ weakly in $L^2$ uniformly in $N$ and so the same holds for $\theta$.  
\vskip.3cm

Lastly, for any constant $a \geq 0$, the function $(\theta - a)_+$ satisfies
\begin{align*}
\ddt \int \frac{1}{2} (\theta - a)_+^2 + \int \abs{\Lambda^{1/2} (\theta - a)_+}^2 &= \frac{-1}{2}\int u \cdot \grad (\theta - a)_+^2 - \int a \Lambda (\theta - a)_+ - \eps \int \abs{\grad (\theta - a)_+}^2
\\ &= - \int a (\theta - a)_+ B_1 - \eps \int \abs{\grad (\theta - a)_+}^2
\\ &\leq 0. 
\end{align*}
This inequality is scaling-invariant, so the same holds for $\lambda \theta(\mu t, \mu x)$ for any $\lambda, \mu > 0$.  

By the standard De Giorgi argument (see Lemma~\ref{thm:DG1 skeleton} in the Appendix for details), there exists a universal constant $\delta$ such that $\int_0^2 \int (\lambda \theta(\mu t, \mu x)_+^2 \,dxdt \leq \delta$ implies $\theta \leq \lambda\n$ on $[\mu 1,\mu 2]$.  In fact, by comparison with a constant super-solution, $\theta \leq \lambda\n$ on $[\mu, \infty)$.  Taking $\lambda = \sqrt{\frac{\delta}{2 \mu^{-2} \norm{\theta_0}_{L^2(\Omega)}^2}}$, we find $\theta(t,\cdot) \leq C t^{-1} \norm{\theta_0}_{L^2(\Omega)}$ for a universal constant $C$.  Applying the same argument to $-\theta$ gives the $L^\infty$ bound.  
\end{proof}

Now that we have global existence of solutions to \eqref{eq:viscous} for $\eps > 0$, we can prove Proposition~\ref{thm:existence} by taking a limit as $\eps \to 0$.  

\begin{proof}[Proof of Proposition \ref{thm:existence}]
For any parameter $\eps > 0$, define $\theta_\eps \in L^2(H_0^1)$ the weak solution to \eqref{eq:viscous} constructed in Lemma~\ref{thm:viscous}.  The $\theta_\eps$ are uniformly bounded in $L^\infty(L^2)$ and $L^2(\HD^{1/2})$ by the standard energy argument, so by interpolation they are also uniformly bounded in $L^4(L^{8/3})$.  Recall that $\theta_\eps$ are uniformly bounded in $L^\infty(L^\infty)$ after any positive time.  

For any smooth $\varphi$, we have
\[ \int_0^\infty \int \del_t \theta_\eps \varphi \leq \norm{\theta_\eps}_{L^4(L^{8/3})}^2 \norm{\varphi}_{L^2(W^{1,4})} + \norm{\theta_\eps}_{L^2(\HD^{1/2})} \norm{\varphi}_{L^2(\HD^{1/2})} + \eps \norm{\theta_\eps}_{L^2(\HD^{1/2})} \norm{\varphi}_{L^2(\HD^{3/2})}. \]
Therefore $\del_t \theta_\eps$ is uniformly bounded in $L^2(\HD^{-3/2})$.  By Aubin-Lions, the sequence $\theta_\eps$, up to a subsequence, has a strong limit in $L^2(L^2)$.  Call this limit $\theta$.  

Since $\del_t \theta_\eps$ is uniformly bounded and $\theta(t,\cdot) \to \theta_0$ weakly in $L^2$, the same holds for $\theta$.  


Define $u_\eps := \grad^\perp \Lambda^{-1} \theta_\eps$, and by continuity of the Riesz transform we have $u_\eps \to u$ in $L^2([0,\infty)\times\Omega)$ where $u := \grad^\perp\Lambda^{-1} \theta$.  
\vskip.3cm

It remains only to prove that $\theta$ and $u$ satisfy the energy inequalities \eqref{leray-hopf condition} and \eqref{local condition}.  Recall that $\theta_\eps$ and $u_\eps$ satisfy \eqref{leray-hopf condition} and \eqref{local condition} by Proposition~\ref{thm:suitability}, so we need only show that these inequalities hold also in the limit.  The details of this calculation are given below.  

Let $0 < T < \infty$ be a constant, and let $\lambda$, $\mu$, $\Psi$, $S$ and $\varphi$ be as in the definition of suitable solutions.  Define 
\newcommand{\thetae}{\tilde{\theta}_\eps}
\newcommand{\ue}{\tilde{u}_\eps}
\newcommand{\thetaep}{\tilde{\theta}_{\eps,+}}
\begin{align*}
\thetae(t,x) &:= \lambda\theta_\eps(\mu t, \mu x), & \tilde{\theta}(t,x) &=  \lambda \theta(\mu t, \mu x), \\
\ue(t,x) &:= u_\eps(\mu t, \mu x), & \tilde{u}(t,x) &:= u(\mu t, \mu x), \\
\thetaep(t,x) &:= \paren{\thetae(t, x) - \Psi(t,x)}_+, & \tilde{\theta}_+(t,x) &:= \paren{\tilde{\theta}(t, x) - \Psi(t,x)}_+,
\end{align*}
and let $\tilde{\Omega} := \{x \in \R^2: \mu x \in \Omega\}$, $\tilde{T} := \mu\n T$, $\tilde{\eps} := \mu\n \eps$.  

Note that $\thetae$ and $\ue$ are weak solutions to \eqref{eq:viscous linear} with viscosity $\tilde{\eps}$.  Therefore, if $\eps \leq \mu$ then $\thetaep$ and $\ue$ satisfy \eqref{leray-hopf condition}.  The terms $\int \thetaep (\del_t\Psi+\ue\cdot\grad\Psi)$ and $\int \indic{\thetae \geq 0}$ are continuous under $L^2$ limits, and the quantities $\ddt \int \thetaep^2$ and $\int \abs{\Lambda^{1/2} \thetaep}^2$ are lower-semicontinuous under $L^2$ limits, so we conclude that $\tilde{\theta}_+$ and $\tilde{u}$ satisfy \eqref{leray-hopf condition}.  

Similarly, $\thetaep$ and $\ue$ satisfy \eqref{local condition} if $\eps \leq \mu$.  On $[S, \tilde{T}]$ we have a uniform $L^\infty(L^\infty)$ bound for $\thetae$. Therefore $\thetaep$ converges in $L^3(L^3)$, and so $\int_S^{\tilde{T}}\int \thetaep^2 \ue\cdot\grad\varphi$ is conserved in the limit $\eps \to 0$.  The remaining terms in \eqref{local condition} are $L^2(L^2)$ continuous, so $\tilde{\theta}_+$ and $\tilde{u}$ satisfy \eqref{local condition}.  
\end{proof}

\vskip1cm
\section{Littlewood-Paley Theory} \label{sec:littlewood paley}

In this section we will prove that, because $\theta$ is uniformly bounded in $L^\infty$, the velocity $u = \grad^\perp \Lambda^{-1} \theta$ is calibrated (see Definition~\ref{def:calibrated}).  The proof will utilize a Littlewood-Paley theory adapted to a bounded set $\Omega$.  

Because the Littlewood-Paley theory depends in an essential way on the domain $\Omega$, any results proven in this way will also be domain-dependent.  Therefore, in the proof of H\"{o}lder continuity in Section~\ref{sec:holder}, we will apply the following Proposition only to the unscaled function $\theta$ on the unscaled domain $\Omega$.  As we zoom in, the velocity will remain calibrated, so there will be no further need for this result.  

\begin{proposition} \label{thm:u is calibrated}
Let $\Omega \subseteq \R^2$ be a bounded set with $C^{2,\beta}$ boundary for some $\beta \in (0,1)$.  Let $\theta \in L^\infty(\Omega)$.  Then there exists an integer $j_0 = j_0(\Omega)$ and a sequence of divergence-free functions $(u_j)_{j \geq j_0}$ calibrated for some constant $\Ccalib = \Ccalib(\Omega, \norm{\theta}_\infty)$ with center 0 (see Definition~\ref{def:calibrated}) such that
\[ \grad^\perp \Lambda^{-1} \theta = \sum_{j \geq j_0} u_j \]
with the infinite sum converging in the sense of $L^2$.  
\end{proposition}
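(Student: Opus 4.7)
The strategy is to realize the decomposition $u = \sum_j u_j$ via a Littlewood--Paley decomposition adapted to $-\Laplace_D$, as developed by Iwabuchi--Matsuyama--Taniguchi \cite{IMT.bilinear}. Choose a smooth dyadic partition of unity $1 = \sum_{j\in\Z}\phi_j(\lambda)$ on $(0,\infty)$ with $\supp\phi_j\subseteq[2^{j-1},2^{j+1}]$, and define the spectral projections $P_jf:=\phi_j(\Lambda)f$ using the spectral calculus for $\Lambda$. Since $\lambda_0(\Omega)>0$, only finitely many negative $j$ contribute, and one can set $j_0:=j_0(\Omega)$ to be any integer with $2^{j_0-1}<\sqrt{\lambda_0}$, so that $\sum_{j\geq j_0}P_j=\mathrm{Id}$ on $L^2(\Omega)$. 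I would then take
\[ u_j := \grad^\perp \Lambda^{-1} P_j \theta, \qquad j\geq j_0, \]
which is automatically divergence-free; the identity $\grad^\perp\Lambda^{-1}\theta = \sum_{j\geq j_0} u_j$ in $L^2$ then follows from the $L^2$-boundedness of the Riesz transform $\grad\Lambda^{-1}$ together with $L^2$-convergence of the LP decomposition.

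The heart of the proof is verifying the three pointwise bounds of Definition~\ref{def:calibrated} with center $N=0$ and $\Ccalib = C(\Omega)\norm\theta_\infty$. For this I would invoke the Bernstein-type inequalities proved in \cite{IMT.bilinear} for the spectrum of $-\Laplace_D$ on bounded $C^\infty$ (and by approximation $C^{2,\beta}$) domains, in the form
\[ \norm{\Lambda^s P_j f}_{L^\infty(\Omega)} \leq C\, 2^{js}\norm{P_j f}_{L^\infty(\Omega)},\qquad \norm{\grad P_j f}_{L^\infty(\Omega)} \leq C\, 2^j \norm{P_j f}_{L^\infty(\Omega)}, \]
together with the uniform $L^\infty$-boundedness $\norm{P_j f}_\infty\leq C\norm f_\infty$ of the projections themselves. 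Applying these with $s=\pm 1$ and $s=-1/4$ to $P_j\theta$, the $L^\infty$ bound reads
\[ \norm{u_j}_\infty = \norm{\grad^\perp\Lambda^{-1}P_j\theta}_\infty \leq C\, 2^j\cdot 2^{-j}\norm\theta_\infty = C\norm\theta_\infty, \]
while the Lipschitz and negative-Sobolev bounds follow from one further application of the Bernstein inequalities to $u_j$ (which lives at the same frequency scale $2^j$):
\[ \norm{\grad u_j}_\infty\leq C\, 2^j\norm{u_j}_\infty,\qquad \norm{\Lambda^{-1/4} u_j}_\infty\leq C\, 2^{-j/4}\norm{u_j}_\infty. \]
These are exactly the required inequalities with $N=0$ and $\Ccalib = C(\Omega)\norm\theta_\infty$.

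The main obstacle is the $L^\infty$ boundedness of the LP projections and of the Riesz transform applied to a single LP block, since on bounded domains $\grad\Lambda^{-1}$ is not a Calder\'on--Zygmund operator in a translation-invariant sense. This is precisely the content of the Besov-space Bernstein estimates of \cite{IMT.bilinear}, so the proof amounts to a clean invocation of that machinery: once one has the Bernstein inequality and the $L^\infty$-boundedness of $P_j$, the three calibrated bounds fall out by combining powers of $2^j$. The reason $j_0$ (and hence $\Ccalib$) depends on $\Omega$ is precisely the domain-dependent spectral gap $\lambda_0(\Omega)$ and the domain-dependent constants in the Bernstein inequalities; this is acceptable because, as noted in the passage preceding the proposition, the result is applied only once, at the largest scale, so no rescaling of $\Omega$ is needed.
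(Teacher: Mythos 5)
Your choice of $u_j := \grad^\perp\Lambda\n P_j\theta$ matches the paper, and the $L^\infty$ bound $\norm{u_j}_\infty \leq C\norm\theta_\infty$ is handled the same way. But the other two inequalities are where the real work lies, and the proposal elides the decisive obstruction: on a bounded domain $\grad$ does \emph{not} commute with the spectral projections $P_j$, so $u_j = \grad^\perp\Lambda\n P_j\theta$ is \emph{not} spectrally localized near frequency $2^j$. Your claim that $u_j$ ``lives at the same frequency scale $2^j$'' and that the remaining bounds follow from ``one further application of the Bernstein inequalities to $u_j$'' is exactly the step that fails. Bernstein-type estimates of \cite{IMT.bilinear} control $\Lambda^s P_j f$ and $\grad\Lambda^s P_j f$, i.e.\ objects with an explicit $P_j$ on the outside; they say nothing about $\Lambda^{-1/4}u_j$ or $\grad u_j$ unless you can re-localize $u_j$ in frequency, which is precisely what is lost when $\grad$ is applied.

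The paper handles each of the two remaining bounds with a genuinely different argument, neither of which is ``one further application'' of Bernstein. For $\Lambda^{-1/4}u_j$, it decomposes $\Lambda^{-1/4}u_j = \sum_i P_i\Lambda^{-1/4}u_j$, writes $\paren{P_i\Lambda^{-1/4}u_j}^\perp = \paren{\Lambda^{-1/4}\bar P_i}\paren{P_i\grad P_j}\paren{\Lambda\n\bar P_j}\theta$, and uses the commutator-type estimate of Lemma~\ref{thm:grad and proj}, $\norm{P_i\grad P_j f}_\infty\leq C\min(2^i,2^j)\norm f_\infty$, to make the $i$-sum converge to $C\,2^{-j/4}\norm\theta_\infty$; without that lemma the sum has no reason to be summable. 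For $\grad u_j = D^2\Lambda\n P_j\theta$, the paper abandons frequency decompositions altogether: it sets $F:=\Lambda\n P_j\theta$, notes $-\Laplace F = \Lambda P_j\theta$ with zero boundary data, applies the Schauder estimate to control $\bracket{D^2F}_\beta$ via $\norm{\Lambda P_j\theta}_{C^\beta}$, and then interpolates (Lemma~\ref{thm:interpolation holder to C1}) between $\norm{\grad F}_\infty$ and $\bracket{D^2F}_\beta$ with a length scale $\delta\sim 2^{-j}$ to recover $\norm{D^2F}_\infty\leq C\,2^j\norm\theta_\infty$. Your proposal contains neither the commutator estimate nor the Schauder/interpolation step, so it leaves both of the nontrivial calibrated bounds unproven; it also silently assumes that $\Lambda^{-1/4}$ applied to the vector field $u_j$ (which is neither scalar nor in the form domain of $\Laplace_D$) is controllable by a single Bernstein application, which is another incarnation of the same non-commutativity issue.
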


Before we can prove this, we define the Littlewood-Paley projections and prove some of their properties:

Let $\phi$ be a Schwartz function on $\R$ which is suited to Littlewood-Paley decomposition.  Specifically, $\phi$ is non-negative, supported on $[1/2,2]$, and has the property that
\[ \sum_{j \in \Z} \phi(2^j \xi) = 1 \qquad \forall \xi \neq 0. \]
For any $f = \sum f_k \eigen{k}$ in $L^2(\Omega)$, we define the Littlewood-Paley projections
\[ P_j f := \sum_k \phi(2^j \lambda_k^{1/2}) f_k \eigen{k}. \]
Note that $P_j$ depends strongly on the domain $\Omega$.  

Recall that $-\Laplace_D$ has some smallest eigenvalue $\lambda_0$ (depending on $\Omega$) so if we define $j_0 = \log_2(\lambda_0)-1$ then $P_j = 0$ for all $j < j_0$.

The Bernstein Inequalities adapted for a bounded domain are proved in \cite{IMT.bilinear}.  We restate their result here:
\begin{lemma}[Bernstein Inequalities] \label{thm:IMT stuff}
Let $1 \leq p \leq \infty$ and $\Omega \subset \R^2$ a bounded open set with $C^{2,\beta}$ boundary for some $\beta \in (0,1)$, and let $(P_j)_{j \in \Z}$ be the Littlewood-Paley decomposition defined above.  

There exists a constant $C$ depending on $p$ and $\Omega$ such that the following hold for any $f \in L^p(\Omega)$:

For any $\alpha \in \R$ and $j \in \Z$, 
\[ \norm{\Lambda^\alpha P_j f}_{L^p(\Omega)} \leq C 2^{\alpha j} \norm{f}_{L^p(\Omega)}. \]

For any $\alpha \in \R$ and $j \geq j_0$
\[ \norm{\grad \Lambda^\alpha P_j f}_{L^p(\Omega)} \leq C 2^{(1+\alpha) j} \norm{f}_{L^p(\Omega)}. \]
\end{lemma}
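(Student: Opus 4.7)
The plan is to prove both inequalities by spectral multiplier theory for $-\Laplace_D$. Write $\Lambda^\alpha P_j$ as $m_j(\sqrt{-\Laplace_D})$ with profile $m_j(\xi) := \xi^\alpha \phi(2^j\xi)$, and reduce $L^p \to L^p$ boundedness to pointwise control on its Schwartz kernel $K_{j,\alpha}(x,y)$. After the natural rescaling, $m_j$ is a constant multiple of $\tilde m(2^j\xi)$ with $\tilde m(\xi) := \xi^\alpha \phi(\xi)$ smooth and compactly supported in the annulus $[1/2,2]$; by subordination (or the Helffer--Sj\"ostrand representation) one then expresses $\tilde m(\sqrt{-\Laplace_D})$ as an integral of the Dirichlet heat semigroup $e^{t\Laplace_D}$ against a rapidly decaying weight built from $\tilde m$.

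First I would handle the pure $L^p$ bound. The Dirichlet heat kernel $p_t(x,y)$ on a $C^{2,\beta}$ domain satisfies classical Gaussian upper bounds $p_t(x,y)\leq C t^{-1} e^{-c|x-y|^2/t}$ on any fixed time window, together with a matching gradient estimate $|\grad_x p_t(x,y)|\leq C t^{-3/2} e^{-c|x-y|^2/t}$ (Davies and subsequent refinements on smooth bounded domains). Substituting the heat-kernel bound into the subordination representation and tracking the scaling yields an $L^1$-in-$y$ estimate on $K_{j,\alpha}$ with the expected factor of $2^{\alpha j}$, and Schur's test then produces the $L^p\to L^p$ bound.

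The gradient inequality follows by the identical argument with $p_t$ replaced by $\grad_x p_t$: the additional $t^{-1/2}$ translates, after the scaling, into an extra multiplicative $2^j$, producing the claimed bound on $\grad \Lambda^\alpha P_j$. The restriction $j\geq j_0$ is essentially harmless, since $P_j \equiv 0$ for $j$ outside the range compatible with the spectrum of $-\Laplace_D$.

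The main obstacle is that, unlike on $\R^2$ or $\T^2$, none of these heat-kernel bounds are scale invariant on $\Omega$: every constant inherits dependence on $\Omega$ through its diameter, the $C^{2,\beta}$ regularity of $\del\Omega$, and the spectral gap $\lambda_0$. Rather than redo this careful domain-dependent bookkeeping by hand, I would invoke the Bernstein inequalities of \cite{IMT.bilinear}, where precisely this strategy is executed and the dependence on $\Omega$ is tracked explicitly.
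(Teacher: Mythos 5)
Your plan is essentially the paper's: both claims are reduced to the theory of \cite{IMT.bilinear}, which indeed runs on heat-semigroup subordination plus Gaussian upper bounds exactly as you describe, and for the first inequality the paper simply invokes Lemma~3.5 of \cite{IMT.bilinear} (equivalently Theorem~1.1 of \cite{IMT.schrodinger}).

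There is, however, a real gap in your treatment of the second inequality, and it sits precisely where you wave it away. You write that the restriction $j \ge j_0$ ``is essentially harmless, since $P_j \equiv 0$ for $j$ outside the range compatible with the spectrum.'' That observation explains why one only \emph{needs} the estimate for $j\geq j_0$; it does not supply the estimate there. Lemma~3.6 of \cite{IMT.bilinear} is proved only for $j>0$, because its hypothesis is a gradient bound on the Dirichlet heat semigroup, $\norm{\grad e^{-t\Laplace_D}}_{L^\infty\to L^\infty} \leq C t^{-1/2}$, valid only for $0<t\leq 1$. Since $j_0 = \log_2(\lambda_0)-1$ is typically negative on a bounded domain, the frequencies $j_0 \leq j \leq 0$ correspond to subordination times $t$ that leave the unit window, and the off-the-shelf result does not cover them. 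The paper fills this in by citing \cite{FMP}, which extends the gradient heat-semigroup bound to $0<t\leq T$ for $C^{2,\beta}$ domains and arbitrary finite $T$; choosing $T$ as a function of $j_0$ (hence of $\Omega$) and rerunning the proof of Lemma~3.6 then yields the claim for all $j\geq j_0$. Your sketch of Gaussian gradient bounds ``on any fixed time window'' gestures at the right idea, but you should state explicitly that (i) the time window must depend on $j_0$ and hence on $\lambda_0(\Omega)$, (ii) the literature you plan to cite imposes $j>0$ rather than $j\geq j_0$, and (iii) the extension to the low-frequency range is where \cite{FMP} enters.
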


\begin{proof}
The first claim is Lemma 3.5 in \cite{IMT.bilinear}.  It is also an immediate corollary of \cite{IMT.schrodinger} Theorem 1.1.  

The second claim is similar to Lemma 3.6 in \cite{IMT.bilinear}.  A hypothesis of Lemma 3.6 is that
\[ \norm{\grad e^{-t\Laplace_D}}_{L^\infty \to L^\infty} \leq \frac{C}{\sqrt{t}} \qquad 0 < t \leq 1 \]
(a property of $\Omega$).  The result of Lemma 3.6 only covers the case $j > 0$.  

In \cite{FMP} it is proved that that if $\Omega$ is $C^{2,\beta}$ then
\[ \norm{\grad e^{-t\Laplace_D}}_{L^\infty \to L^\infty} \leq \frac{C}{\sqrt{t}} \qquad 0 < t \leq T \]
which, by taking some $T$ depending on $j_0$, is enough to prove the desired result for $j \geq j_0$ by a trivial modification of the proof in \cite{IMT.bilinear}.  
\end{proof}

The following lemma is a simple but crucial result which can be thought of as describing the commutator of the gradient operator and the projection operators.  In the case of $\R^2$, the Littlewood-Paley projections commute with the gradient so $P_i \grad P_j = 0$ unless $|i-j|\leq 1$.  On a bounded domain, this is not the case; the gradient does not maintain localization in frequency-space.  However, the following lemma formalizes the observation that $P_i \grad P_j \approx 0$ when $i << j$.  

\begin{lemma} \label{thm:grad and proj}
Let $1 \leq p \leq \infty$.  There exists a constant $C$ depending on $p$ and $\Omega$ such that or any function $f \in L^p(\Omega)$,
\[ \norm{P_i \grad P_j f}_p \leq C \min(2^j,2^i) \norm{f}_p. \]
\end{lemma}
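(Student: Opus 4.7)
The plan is to prove two separate bounds and take their minimum. The first bound, $\norm{P_i \grad P_j f}_p \leq C 2^j \norm{f}_p$, is a direct consequence of Lemma~\ref{thm:IMT stuff}: setting $\alpha = 0$ in the first inequality shows that $P_i$ is bounded on $L^p$ with constant depending only on $p$ and $\Omega$, while setting $\alpha = 0$ in the second inequality shows that $\grad P_j$ has operator norm $\leq C 2^j$ on $L^p$, so composing the two estimates handles this case.

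The second bound, $\norm{P_i \grad P_j f}_p \leq C 2^i \norm{f}_p$, requires transferring the gradient off of $P_j f$ and onto $P_i$. I would do this by duality. Fix a vector-valued $g \in L^{p'}$ with $\norm{g}_{p'} \leq 1$ where $p' = p/(p-1)$. Since each $P_i$ acts diagonally in the $L^2$-eigenbasis $(\eigen{k})$, it is symmetric, so
\[ \int g \cdot P_i \grad P_j f = \int P_i g \cdot \grad P_j f. \]
Because $\phi$ is supported in $[1/2,2]$ and the eigenvalues $\lambda_k \to \infty$, the function $P_i g$ is a \emph{finite} linear combination of eigenfunctions $\eigen{k}$, each of which lies in $C^\infty(\bar{\Omega})$ and vanishes on $\del \Omega$. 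We may therefore integrate by parts without boundary terms to obtain
\[ \int P_i g \cdot \grad P_j f = -\int \div(P_i g) \, P_j f. \]
H\"{o}lder's inequality followed by Lemma~\ref{thm:IMT stuff} (the second inequality with $\alpha = 0$ applied to $P_i g$, and the first inequality with $\alpha = 0$ applied to $P_j f$) bounds this by $C 2^i \norm{g}_{p'} \norm{f}_p$; taking the supremum over $g$ yields the desired inequality.

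The main technical concern is handling the endpoints $p \in \{1, \infty\}$, where $L^p$--$L^{p'}$ duality is less transparent. In these cases one uses the standard identities $\norm{h}_\infty = \sup_{\norm{g}_1 \leq 1} |\int gh|$ and $\norm{h}_1 = \sup_{\norm{g}_\infty \leq 1} |\int gh|$, and the duality argument goes through essentially unchanged since the Bernstein inequalities in Lemma~\ref{thm:IMT stuff} hold for all $p \in [1,\infty]$. Combining the two bounds yields $\norm{P_i \grad P_j f}_p \leq C \min(2^i, 2^j) \norm{f}_p$. One should also note that whenever $i < j_0$ or $j < j_0$ the corresponding projection vanishes identically, so the bound is trivial in that regime and the second Bernstein inequality (which required $j \geq j_0$) is available whenever it is actually needed.
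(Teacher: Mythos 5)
Your argument is correct and follows essentially the same route as the paper: both proofs derive the $C\,2^i$ bound by testing against $g\in L^{p'}$, pulling $P_i$ onto $g$ by self-adjointness, and integrating by parts to transfer the gradient onto $P_i g$, then invoking the Bernstein inequalities of Lemma~\ref{thm:IMT stuff}. Your handling of the $C\,2^j$ bound by directly composing the $L^p$-boundedness of $P_i$ with the $L^p\to L^p$ bound $\norm{\grad P_j}\leq C2^j$ is a cosmetic variant of the paper's duality phrasing of the same estimate, and your added remarks --- that $P_i g$ is a finite combination of smooth eigenfunctions vanishing on $\partial\Omega$ (so the integration by parts has no boundary term), that the endpoint cases $p\in\{1,\infty\}$ go through via the same pairing, and that the $j\geq j_0$ restriction in Lemma~\ref{thm:IMT stuff} is harmless since $P_j=0$ below $j_0$ --- are accurate justifications that the paper leaves implicit.
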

\begin{proof}
Let $q$ be the H\"{o}lder conjugate of $p$ and $g$ be an $L^q$ function.  Then since $P_i$ is self-adjoint
\[ \int g P_i \grad P_j f = \int (P_i g) \grad P_j f \leq C 2^j \norm{g}_q \norm{f}_p \]
by Lemma \ref{thm:IMT stuff}.  

Further integrating by parts,
\[ \int g P_i \grad P_j f = - \int (\grad P_i g) P_j f \leq C 2^i \norm{g}_q \norm{f}_p. \]
This also follows from Lemma \ref{thm:IMT stuff}.  

The result follows.  
\end{proof}

We are now ready to prove Proposition \ref{thm:u is calibrated}.  

\begin{proof}[Proof of \ref{thm:u is calibrated}]
For each integer $j \geq j_0$, we define $u_j$ to be the $\frac{\pi}{2}$-rotation of the Riesz transform of the $j\ith$ Littlewood-Paley projection of $\theta$:
\[ u_j := \grad^\perp \Lambda^{-1} P_j \theta. \]
Qualitatively, we know that $\theta \in L^2$ and hence $u_j \in L^2$.  In fact, $u = \sum u_j$ in the $L^2$ sense.  

We must bound $u_j$, $\Lambda^{-1/4} u_j$, and $\grad u_j$ all in $L^\infty(\Omega)$.  
\vskip.3cm

By straightforward application of Lemma~\ref{thm:IMT stuff},
\begin{equation} \label{uj in Linfty} \norm{u_j}_\infty \leq C \norm{\theta}_\infty. \end{equation}
\vskip.3cm

Since $u_j \in L^2$, we know that
\[ \Lambda^{-1/4} u_j = \sum_{i \in \Z} P_i \Lambda^{-1/4} u_j. \]
Define $\bar{P}_k := P_{k-1} + P_k + P_{k+1}$.  Then $\bar{P}_k P_k = P_k$, and since the projections $P_k$ are spectral operators, they commute with $\Lambda^s$ and each other.  We therefore rewrite
\[ \paren{P_i \Lambda^{-1/4} u_j}^\perp = \paren{\Lambda^{-1/4} \bar{P}_i} \paren{P_i \grad P_j} \paren{\Lambda^{-1} \bar{P}_j} \theta. \]
On the right hand side we have three bounded linear operators applied sequentially to $\theta \in L^\infty$.  The first operator has norm $C 2^{-j}(2^{1}+2^0+2^{-1})$ by Lemma~\ref{thm:IMT stuff}.  The second operator has norm $C \min(2^j, 2^i)$ by Lemma~\ref{thm:grad and proj}.  The third operator has norm $C 2^{-i/4}(2^{1/4} + 2^0 + 2^{-1/4})$ by Lemma~\ref{thm:IMT stuff}.  Therefore
\[ \norm{ P_i \Lambda^{-1/4} u_j}_\infty \leq C 2^{-i/4} \min(2^j, 2^i) 2^{-j} \norm{\theta}_\infty. \]

Summing these bounds on the projections of $\Lambda^{-1/4} u_j$, and noting that
\[ \sum_{i \in \Z} 2^{-j} 2^{-i/4} \min(2^j,2^i) = 2^{-j} \sum_{i \leq j} 2^{i 3/4} + \sum_{i>j} 2^{-i/4} \leq C 2^{-j/4}, \]
we obtain
\begin{equation}\label{uj in W-1/4} \norm{\Lambda^{-1/4} u_j}_\infty \leq C 2^{-j/4} \norm{\theta}_\infty. \end{equation}
\vskip.3cm

Lastly, we must show that $\grad u_j$ is in $L^\infty$.  Equivalently, we will show that $\Lambda^{-1} P_j \theta$ is $C^{1,1}$.  The method of proof is Schauder theory.  

For convenience, define
\[ F := \Lambda^{-1} P_j \theta. \]
Notice that $F$ is a linear combination of Dirichlet eigenfunctions, so in particular it is smooth and vanishes at the boundary.  Therefore
\[ -\Laplace F = \Lambda^2 F = \Lambda P_j \theta. \]


We apply the standard Schauder estimate from Gilbarg and Trudinger \cite{GiTr} Theorem 6.6 to bound some $C^{2,\alpha}$ semi-norm of $F$ by the $L^\infty$ norm of $F$ and the $C^\alpha$ norm of its Laplacian.  By assumption there exists $\beta \in (0,1)$ such that $\Omega$ is $C^{2,\beta}$, and for this $\beta$ we have by the Schauder estimate
\begin{equation} \label{schauder estimate} \bracket{D^2 F}_{\beta} \leq C \norm{\Lambda^{-1} P_j \theta}_\infty + C \norm{\Lambda P_j \theta}_\infty + C \bracket{\Lambda P_j \theta}_{\beta}. \end{equation}

By Lemma \ref{thm:IMT stuff},
\begin{align*} 
\norm{\Lambda^{-1} P_j \theta}_\infty &\leq C 2^{-j} \norm{\theta}_\infty, \\
\norm{\Lambda P_j \theta}_\infty &\leq C 2^j \norm{\theta}_\infty, \\
\norm{\grad \Lambda P_j \theta}_\infty &\leq C 2^{2j} \norm{\theta}_\infty. 
\end{align*}
By Lemma~\ref{thm:interpolation C1 to holder} (see Appendix~\ref{sec:technical}) we can interpolate these last two bounds to obtain
\[ \bracket{\Lambda P_j \theta}_\beta \leq C 2^{j(1+\beta)} \norm{\theta}_\infty. \]
Plugging these estimates into \eqref{schauder estimate} yields
\[ \bracket{D^2 F}_\beta \leq C \paren{2^{-j} + 2^j + 2^{j(1+\beta)}} \norm{\theta}_\infty. \]

Recall that without loss of generality we can assume $j \geq j_0$.  Therefore up to a constant depending on $j_0$, the term $2^{j(1+\beta)}$ bounds $2^j$ and $2^{-j}$ so we can write
\[ \bracket{D^2 F}_\beta \leq C 2^{j(1+\beta)} \norm{\theta}_\infty. \]

Using this estimate and the fact that $\norm{\grad F}_\infty = \norm{\grad \Lambda^{-1} P_j \theta}_\infty \leq C \norm{\theta}_\infty$ (see \eqref{uj in Linfty}), we can interpolate to obtain an $L^\infty$ bound on $D^2 F$.  
Lemma~\ref{thm:interpolation holder to C1} states that since $F \in C^{2,\beta}$ and $\Omega$ is sufficiently regular, there exist a constant $\ell = \ell(\Omega)$ such that for any $\delta \in [0, \ell]$ we have
\begin{align*}
\norm{D^2 F}_\infty &\leq C \paren{\delta\n \norm{\grad F}_\infty + \delta^{\beta} \bracket{D^2 F}_\beta}
\\ &\leq C \paren{\delta\n + \delta^\beta 2^{j(1+\beta)}} \norm{\theta}_\infty.  
\end{align*}

Set $\delta = 2^{-j} (2^{j_0} \ell) \leq \ell$.  Then
\[ \norm{D^2 F}_\infty \leq C \paren{2^j + 2^{-j\beta} 2^{j(1+\beta)}} \norm{\theta}_\infty = C(\Omega) 2^j \norm{\theta}_\infty. \]

Since $D^2 F = \grad u_j$, this estimate together with \eqref{uj in Linfty} and \eqref{uj in W-1/4} complete the proof.  
\end{proof}

\vskip1cm
\section{De Giorgi Estimates} \label{sec:de giorgi}

Our goal in this section is to prove De Giorgi's first and second lemmas for suitable solutions to \eqref{eq:main linear} with $u$ uniformly calibrated.  The De Giorgi lemmas will eventually be applied iteratively to various rescalings of the solution $\theta$, so the following results must be independent of the size of the domain $\Omega$.  Any properties we do assume for the domain, such as the regularity of the boundary, must be scaling invariant.  
\vskip.3cm

Rather than working directly with the calibrated sequence, we will decompose $u$ into just two terms, a low-pass term and a high-pass term.  The construction is described in the following lemma.  Note that we make no assumption on the center of calibration, which means this result is indendent of scale.  
\begin{lemma} \label{thm:calibration is good}
Let 
\[ u = \sum_{j_0}^\infty u_j \]
with the sum converging in the $L^2$ sense.  Assume that $(u_j)_{j \in \Z}$ is a calibrated sequence with constant $\Ccalib$ and some center, and that $\div(u_j)=0$ for all $j$.

Then
\[ u = \ulow + \uhigh \]
with 
\begin{align*} 
\norm{\grad \ulow}_{L^\infty([-T,0]\times \Omega)} &\leq 2 \Ccalib, \\
\norm{\Lambda^{-1/4} \uhigh}_{L^\infty([-T,0]\times\Omega)} &\leq 6 \Ccalib. 
\end{align*}
and $\div(\ulow) = \div(\uhigh) = 0$.  
\end{lemma}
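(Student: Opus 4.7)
\medskip

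\noindent\textbf{Proof plan.} The definition of a calibrated sequence with center $N$ is designed so that the gradient bound is useful precisely when $j \leq N$ (in which case $2^{j}2^{-N} \leq 1$) and the $\Lambda^{-1/4}$ bound is useful precisely when $j > N$ (in which case $2^{-j/4}2^{N/4} < 1$). The plan is to split the series at the center of calibration, so I will set
\[
\ulow := \sum_{j_0 \leq j \leq N} u_j, \qquad \uhigh := \sum_{j > N} u_j,
\]
with the obvious convention $\ulow = 0$ and $\uhigh = u$ if $N < j_0$. Because $\div u_j = 0$ for every $j$ and the sum defining $u$ converges in $L^2$, both $\ulow$ and $\uhigh$ are automatically divergence-free (in the distributional sense), and $u = \ulow + \uhigh$ by construction.

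\smallskip

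The gradient estimate is the easier of the two: $\ulow$ is a finite sum (when $N \geq j_0$), so the triangle inequality together with the second calibration bound gives
\[
\norm{\grad \ulow}_{L^\infty} \leq \sum_{j \leq N} 2^{j-N}\Ccalib = \Ccalib \sum_{k \geq 0} 2^{-k} = 2\Ccalib,
\]
which is the required bound. If $N<j_0$ then $\ulow = 0$ and there is nothing to check.

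\smallskip

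For the high-frequency part I will use the third calibration bound termwise: $\norm{\Lambda^{-1/4} u_j}_{L^\infty} \leq 2^{(N-j)/4}\Ccalib$, so the series $\sum_{j > N} \Lambda^{-1/4} u_j$ converges absolutely in $L^\infty$, with norm bounded by
\[
\Ccalib \sum_{k \geq 1} 2^{-k/4} \;=\; \frac{\Ccalib}{2^{1/4}-1} \;<\; 6\Ccalib.
\]
The small bookkeeping point is to check that this $L^\infty$ limit really is $\Lambda^{-1/4}\uhigh$: the partial sums $\sum_{N < j \leq M} u_j$ converge to $\uhigh$ in $L^2$ as $M \to \infty$ (since both $u$ and $\ulow$ are $L^2$ limits of the partial sums), and $\Lambda^{-1/4}$ is continuous on $L^2$, so applying $\Lambda^{-1/4}$ commutes with the tail sum. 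Hence $\norm{\Lambda^{-1/4}\uhigh}_{L^\infty} \leq 6\Ccalib$ and the lemma is proved.

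\smallskip

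There is no real obstacle here — the whole statement is a geometric-series manipulation dictated by the calibration exponents — but the one subtlety worth writing out carefully is the interchange of $\Lambda^{-1/4}$ with the infinite sum in the high-frequency part; everything else is just the triangle inequality.
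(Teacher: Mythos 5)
Your proof is correct and takes essentially the same approach as the paper: split the series at the center $N$ (low frequencies $j\leq N$ into $\ulow$, high frequencies $j>N$ into $\uhigh$) and sum the two geometric series coming from the calibration bounds. The only thing you add beyond the paper's two-line argument is the remark about interchanging $\Lambda^{-1/4}$ with the $L^2$-convergent tail, which is a correct and harmless bit of extra care.
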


We call $\ulow$ the low-pass term, and $\uhigh$ the high-pass term.  

\begin{proof}
Let $N$ be the center to which $(u_j)_{j \in \Z}$ is calibrated.  

We define
\[ \uhigh = \sum_{j = N+1}^\infty u_j \]
and bound
\[ \norm{\Lambda^{-1/4} \uhigh}_\infty \leq \sum_{j>N} \norm{\Lambda^{-1/4} u_j}_\infty \leq \Ccalib \frac{2^{-1/4}}{1-2^{-1/4}}. \]

We define
\[ \ulow = \sum_{j = j_0}^N u_j \]
and bound
\[ \norm{\grad \ulow}_\infty \leq \sum_{j \leq N} \norm{\grad u_j}_\infty \leq \Ccalib \frac{1}{1 - 2^{-1}}. \]
\end{proof}

In order to prove the De Giorgi lemmas, we must derive an energy inequality for the function $\paren{\theta - \Psi}_+$ where $\Psi(t,x)$ grows sublinearly in $|x|$.  Considering the suitability condition \eqref{leray-hopf condition}, we see that control can only be gained if the quantity $\del_t \Psi + u \cdot \grad\Psi$ is bounded.  This requires a barrier function which is moving in space along a Lagrangian path $\Gammal$ of $\ulow$.  
\vskip.3cm

To that end, we shall consider, for any domain $\Omega$ and time $T$, functions $\theta:[-T,0]\times\Omega \to \R$, $L^2$ functions $\ulow$ and $\uhigh:[-T,0]\times \Omega \to \R^2$, and a Lipschitz path $\Gammal:[-T,0] \to \Omega$ which satisfy
\begin{equation} \label{eq:main linear brokendown} \begin{cases}
\theta, (\ulow + \uhigh) \textrm{ suitable solution to \eqref{eq:main linear}} & \textrm{on } [-T,0] \times \Omega, \\
\div( \ulow) = \div(\uhigh) = 0 & \textrm{on } [-T,0]\times \Omega, \\
\dot{\Gammal}(t) = \ulow(t, \Gammal(t)) & \textrm{on } [-T,0].
\end{cases} \end{equation}

Because $\Gammal$ depends on $\ulow$ which depends on $N$, the path $\Gammal$ will change significantly between scales.  In particular, though $\Gammal \in \Lip([-T,0];\R^2)$, we cannot assume any uniform bound on it Lipschitz constant.  We can bound, however, the difference between $\Gammal$ at consecutive scales.  Therefore we must consider in the following lemmas an arbitrary Lipschitz path $\Gamma$, which was produced at a previous scale, and denote $\gamma := \Gammal - \Gamma$ which will be uniformly bounded.  
\vskip.3cm

Now we prove an energy inequality for solutions to \eqref{eq:main linear brokendown}.  Though this lemma is independent of the size of the domain, it depends on the geometry of the domain in a way encoded by the constant $\Comega$.  We will later show that this constraint on $\Omega$ is scaling invariant.  

\begin{lemma}[Energy inequality] \label{thm:energy inequality}
Let $\Ccalib$, $\Comega$, $\Cgamma$, $T$, and $R$ be positive constants, and let $\psi:\R^2 \to \R$ be a function such that $\norm{\grad\psi}_\infty$, $\norm{D^2\psi}_\infty$, and $\sup_t \bracket{\psi(t,\cdot)}_{1/4}$ are all finite. Then there exists a constant $C>0$ such that the following holds:

Let $\Omega \subseteq \R^2$ be a bounded open set with $C^{2,\beta}$ boundary for some $\beta \in (0,1)$, and let $\Gamma:[-T,0] \to \R^2$ be Lipschitz.  Assume that on $\Omega$ the functions $K_{1/4}$ and $K_1$ (defined in \eqref{Caff Stinga representation}) satisfy the relation
\[ K_{1/4}(x,y) \leq \Comega |x-y|^{3/4} K_1(x,y) \qquad \forall x\neq y \in \Omega. \]

Let $\theta$, $\ulow$, $\uhigh$, and $\Gammal$ solve \eqref{eq:main linear brokendown} on $[-T,0]\times\Omega$, and satisfy $\norm{\Lambda^{-1/4} \uhigh}_{L^\infty([-T,0]\times\Omega)} \leq 6 \Ccalib$ and $\norm{\grad \ulow}_{L^\infty([-T,0]\times\Omega)} \leq 2\Ccalib$.   Denote $\gamma := \Gammal - \Gamma$ and assume $\norm{\dot{\gamma}}_{L^\infty([-T,0])} \leq \Cgamma$ and $\gamma(0) = 0$.  
\vskip.3cm

Consider the functions
\[ \theta_+ := \paren{\theta - \psi(\cdot-\Gamma)}_+, \qquad \theta_- := \paren{\psi(\cdot-\Gamma) - \theta}_+. \]

If $\theta_+$ is supported on $x \in \Omega \cap B_R(\Gamma(t))$ then $\theta_+$ and $\theta_-$ satisfy the inequality
\[ \ddt \int \theta_+^2 + \int \abs{\Lambda^{1/2} \theta_+}^2 - \int \Lambda^{1/2} \theta_+ \Lambda^{1/2} \theta_- \leq C \paren{ \int \indic{\theta \geq \psi} + \int \theta_+ + \int \theta_+^2 }. \]
\end{lemma}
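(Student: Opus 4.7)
The plan is to derive the estimate directly from \eqref{eq:main linear} rather than invoking \eqref{leray-hopf condition} as a black box. The complication is that the target carries the extra dissipation-like term $-\int\Lambda^{1/2}\theta_+\Lambda^{1/2}\theta_-$ on the left-hand side, which is non-negative by Lemma~\ref{thm:Lambda stuff}\eqref{thm:disjoint} and is therefore not recoverable after the $\theta_-$ piece is dropped in the derivation of \eqref{leray-hopf condition}. Since $\theta,u$ solve \eqref{eq:main linear} distributionally and $\theta,\theta_+\in L^2(\HD^{1/2})$, standard density arguments allow $\theta_+$ as a test function. Writing $\theta = \theta_+ - \theta_- + \psi(\cdot-\Gamma(t))$, multiplying by $\theta_+$, and integrating, the cross terms $\theta_+\del_t\theta_-$ and $\theta_+\grad\theta_-$ vanish a.e.\ (since $\theta_+\theta_-\equiv 0$), the $u\cdot\grad\theta_+$ term collapses via $\div u = 0$ and $\theta_+|_{\del\Omega}=0$, and self-adjointness of $\Lambda^{1/2}$ produces the identity
\[ \tfrac{1}{2}\ddt\int\theta_+^2 + \int|\Lambda^{1/2}\theta_+|^2 - \int\Lambda^{1/2}\theta_+\Lambda^{1/2}\theta_- = -\int\theta_+\bracket{\del_t\psi(\cdot-\Gamma) + u\cdot\grad\psi(\cdot-\Gamma)} - \int\Lambda^{1/2}\theta_+\Lambda^{1/2}\psi(\cdot-\Gamma). \]

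For the second right-hand-side term I would use the Caffarelli-Stinga representation of Proposition~\ref{thm:Caff Stinga representation}. The boundary contribution $-\int\theta_+\psi(\cdot-\Gamma)B_1$ has the correct sign since $\theta_+,\psi,B_1\geq 0$, so it can be dropped. The interior piece $-\iint[\theta_+(x)-\theta_+(y)][\psi(x-\Gamma)-\psi(y-\Gamma)]K_1$ is symmetrized to restrict the integration to $\{\theta_+(x)>0\}$ at cost of a factor $2$, and Young's inequality with weight $\alpha$ bounds it by $\alpha\int|\Lambda^{1/2}\theta_+|^2 + C\alpha^{-1}\int\indic{\theta_+>0}(x)\int[\psi(x-\Gamma)-\psi(y-\Gamma)]^2 K_1(x,y)\,dy\,dx$. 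Using the universal bound $K_1(x,y)\leq C|x-y|^{-3}$ from Proposition~\ref{thm:Caff Stinga representation}, the inner integrand is pointwise dominated by $\min\paren{\norm{\grad\psi}_\infty^2|x-y|^{-1},\ [\psi]_{1/4}^2|x-y|^{-5/2}}$, which is integrable in $\R^2$. The whole term is therefore controlled by $\alpha\int|\Lambda^{1/2}\theta_+|^2 + C\int\indic{\theta\geq\psi}$.

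For the transport term, using $\dot{\Gamma} = \dot{\Gammal} - \dot{\gamma} = \ulow(t,\Gammal(t)) - \dot{\gamma}(t)$ I split
\[ \del_t\psi(\cdot-\Gamma) + u\cdot\grad\psi(\cdot-\Gamma) = \bracket{\paren{\ulow(t,x)-\ulow(t,\Gammal(t))} + \dot{\gamma}(t) + \uhigh(t,x)}\cdot\grad\psi(x-\Gamma(t)). \]
On $\supp\theta_+$ one has $|x-\Gamma(t)|\leq R$ and, integrating $\dot{\gamma}$ from $t=0$, $|\gamma(t)|\leq\Cgamma T$, hence $|x-\Gammal(t)|\leq R+\Cgamma T$; combined with $\norm{\grad\ulow}_\infty\leq 2\Ccalib$, $\norm{\dot{\gamma}}_\infty\leq\Cgamma$, and finite $\norm{\grad\psi}_\infty$, the low-frequency and $\dot{\gamma}$ summands contribute at most $C\int\theta_+$. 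The high-frequency summand $\int\theta_+\uhigh\cdot\grad\psi(\cdot-\Gamma)$ is the main obstacle. Using $\norm{\Lambda^{-1/4}\uhigh}_\infty\leq 6\Ccalib$ and self-adjointness of $\Lambda^{1/4}$ it rewrites as $\int\Lambda^{-1/4}\uhigh\cdot\Lambda^{1/4}\bracket{\theta_+\grad\psi(\cdot-\Gamma)}$, and the sign of $\grad\psi$ is handled by decomposing each component $\partial_i\psi = (\partial_i\psi)^+ - (\partial_i\psi)^-$ into non-negative, compactly supported pieces to which Lemma~\ref{thm:Lambda stuff}\eqref{thm:L1 of Lambda1/4 bounded} applies directly. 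The $\HD^{1/2}$ norms required are controlled by Lemma~\ref{thm:Lambda stuff}\eqref{thm:product rule}: the parts $(\partial_i\psi)^\pm$ inherit the sup bound $\norm{\grad\psi}_\infty$ and Lipschitz constant $\norm{D^2\psi}_\infty$ (since $\max(0,\cdot)$ is $1$-Lipschitz), giving $\norm{\theta_+(\partial_i\psi)^\pm}_{\HD^{1/2}}\leq C\paren{\norm{\theta_+}_2+\norm{\theta_+}_{\HD^{1/2}}}$. Together with the scale-independent support bound $|\supp\theta_+|\leq\pi R^2$ and Young's inequality, this yields $\abs{\int\theta_+\uhigh\cdot\grad\psi(\cdot-\Gamma)}\leq\eta\int|\Lambda^{1/2}\theta_+|^2 + C_\eta\paren{\int\indic{\theta\geq\psi}+\int\theta_+^2}$ for any $\eta>0$.

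Combining the three estimates with $\alpha+\eta\leq\tfrac{1}{4}$ and absorbing the $\int|\Lambda^{1/2}\theta_+|^2$ contributions on the left gives
\[ \tfrac{1}{2}\ddt\int\theta_+^2 + \tfrac{3}{4}\int|\Lambda^{1/2}\theta_+|^2 - \int\Lambda^{1/2}\theta_+\Lambda^{1/2}\theta_- \leq C\paren{\int\indic{\theta\geq\psi} + \int\theta_+ + \int\theta_+^2}. \]
Multiplying by $2$ yields an inequality with unit coefficient on $\ddt\int\theta_+^2$, coefficient $\tfrac{3}{2}$ on $\int|\Lambda^{1/2}\theta_+|^2$, and coefficient $2$ on $-\int\Lambda^{1/2}\theta_+\Lambda^{1/2}\theta_-$; since both $\int|\Lambda^{1/2}\theta_+|^2\geq 0$ and $-\int\Lambda^{1/2}\theta_+\Lambda^{1/2}\theta_-\geq 0$, lowering their coefficients to $1$ only weakens the left-hand side, giving the claimed bound. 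The main technical obstacle throughout is the high-frequency velocity term, which forces the simultaneous use of the $\Lambda^{-1/4}$ calibration, the product rule and $L^1$-on-$\Lambda^{1/4}$ estimates of Lemma~\ref{thm:Lambda stuff}, and the kernel-comparison hypothesis encoded in $\Comega$.
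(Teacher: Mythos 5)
Your estimates are structurally right: the split of $\ulow-\dot\Gamma$ via $\dot\Gamma = \ulow(t,\Gammal)-\dot\gamma$, the $\Lambda^{-1/4}$ / $\Lambda^{1/4}$ duality pairing combined with Lemma~\ref{thm:Lambda stuff}\eqref{thm:L1 of Lambda1/4 bounded} and a product rule (you use part \eqref{thm:product rule}, the paper uses \eqref{thm:extra product rule}; either works), the $(\partial_i\psi)^\pm$ decomposition to meet the non-negativity hypothesis of \eqref{thm:L1 of Lambda1/4 bounded}, and the absorption by Young's inequality all match the intended argument. You are also right that the cross term $-\int\Lambda^{1/2}\theta_+\Lambda^{1/2}\theta_-$ cannot be recovered from \eqref{leray-hopf condition} once it has been discarded, and the paper's own proof of Lemma~\ref{thm:energy inequality}, which simply cites \eqref{leray-hopf condition}, does not actually deliver that term as written.

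The genuine gap is your opening move. You assert that ``standard density arguments allow $\theta_+$ as a test function'' against the weak equation \eqref{eq:main linear} and then carry out the usual chain-rule manipulations ($\int\theta_+\del_t\theta_+=\tfrac12\ddt\int\theta_+^2$, $\int u\cdot\grad\theta_+^2 = 0$, the $\Lambda$-integration-by-parts). A \emph{suitable solution} is only assumed to lie in $L^\infty(L^2)\cap L^2(\HD^{1/2})$ with $u\in L^3(L^3)$, not in $L^2(H_0^1)$, and at that regularity level these manipulations are not justified --- exactly the Leray--Hopf/Navier--Stokes issue that the whole of Section~\ref{sec:suitable} is built to circumvent. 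The energy inequality \eqref{leray-hopf condition} is not derived by testing the limiting PDE against $\theta_+$; it is proven for the viscous approximation in $L^2(H_0^1)$ (where your manipulations are legitimate) and then shown to survive the $\eps\to 0$ limit. The paper's proof of Lemma~\ref{thm:energy inequality} is careful to invoke \eqref{leray-hopf condition} as a \emph{definitional property} of suitable solutions rather than re-derive it, which is precisely the step you are skipping. The cost of that route is that \eqref{leray-hopf condition} as stated omits the $\theta_-$ cross term, so the paper's proof is in fact one step short of its own conclusion. The clean fix is to strengthen \eqref{leray-hopf condition} at the viscous level to keep $-\int\Lambda^{1/2}\theta_+\Lambda^{1/2}\theta_-$ on the left: by the Caffarelli--Stinga formula and disjointness of supports this quantity equals $2\iint\theta_+(x)\theta_-(y)K_1(x,y)\geq 0$, which is stable under the $L^2$ vanishing-viscosity limit by Fatou. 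Under that correction the paper's strategy (cite the suitability inequality, then estimate the $\Psi$-transport terms) is the right one; deriving the identity directly from the limit PDE, as you do, is not available at the regularity that Lemma~\ref{thm:energy inequality} actually assumes.
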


\begin{proof}
Define 
\[ \Psi(t,x) := \psi(x - \Gamma(t)) \]
so that
\[ \del_t \Psi + (\ulow + \uhigh)\cdot\grad \Psi = (\ulow - \dot{\Gamma} + \uhigh)\cdot \grad \psi(x-\Gamma(t)). \]

Applying \eqref{leray-hopf condition} we arrive at
\begin{equation} \label{control from cacciopolli} \ddt \int \theta_+^2 + \int \abs{\Lambda^{1/2} \theta_+}^2 \leq C \paren{ \int \indic{\theta \geq \psi} + \abs{\int \theta_+ (\ulow-\dot{\Gamma}(t) + \uhigh) \cdot \grad\psi(x-\Gamma(t))} }. \end{equation}
\vskip.3cm

Consider first the high-pass term $\int \theta_+ \uhigh\cdot\grad\psi$.  This term is equal to $\int \Lambda^{1/4} (\theta_+\grad\Psi) \Lambda^{-1/4} \uhigh$, as can be calculated by first decomposing $\theta_+\grad\Psi$ and $\uhigh$ as sums of eigenfunctions.  The operations on these infinite sums are justified because $\theta_+ \grad\Psi$, $\Lambda^{1/4} (\theta_+ \grad\Psi)$, $\uhigh$, and $\Lambda^{-1/4} \uhigh$ are all in $L^2$.  Therefore we can apply Lemma~\ref{thm:Lambda stuff} parts \eqref{thm:L1 of Lambda1/4 bounded} and \eqref{thm:extra product rule} to obtain
\[ \int \Lambda^{-1/4} \uhigh \Lambda^{1/4} (\theta_+ \grad\psi) \leq C \norm{\Lambda^{-1/4} \uhigh}_\infty \paren{\norm{\grad\psi}_\infty + \norm{D^2 \psi}_\infty} |\supp(\theta_+)|^{1/2} \paren{ \norm{\theta_+}_{L^2} + \norm{\theta_+}_{\HD^{1/2}}}. \]
We apply Young's inequality to find that for any constant $\eps > 0$ there exists $C = C(\psi,\Ccalib,\Comega,\eps)$ such that
\begin{equation} \label{low pass control} \int \uhigh \theta_+ \grad \psi(x - \Gamma(t)) \,dx \leq C \paren{|\supp(\theta_+)| + \int \theta_+^2} + \eps \int \abs{\Lambda^{1/2} \theta_+}^2. \end{equation}
\vskip.3cm

Consider now the low-pass term.  By \eqref{eq:main linear brokendown}
\begin{equation} \label{low pass term definition} \ulow(t,x) - \dot{\Gamma}(t) = \ulow(t,x) - \ulow(t,\Gamma+\gamma) + \dot{\gamma}. \end{equation}

Since $\ulow$ is has derivative bounded by $2\Ccalib$,
\begin{align*} 
|\ulow(t,x) - \ulow(t,\Gamma+\gamma)| &\leq  |\ulow(t,x)-\ulow(t,\Gamma)| + |\ulow(t,\Gamma) - \ulow(t,\Gamma+\gamma)| 
\\ &\leq 2\Ccalib |x-\Gamma| + 2\Ccalib |\gamma|. 
\end{align*}
By assumption $|\dot{\gamma}|\leq \Cgamma$ and $\gamma(0)=0$, and so for $t \in [-T,0]$ we have $|\gamma(t)| \leq T \Cgamma$.  

Plugging these bounds into \eqref{low pass term definition} we obtain
\[ \abs{\ulow(t,x) - \dot{\Gamma}(t)} \leq 2\Ccalib |x-\Gamma| + 2\Ccalib T \Cgamma + \Cgamma. \]

Now we can bound the low pass term
\[ \int (\ulow - \dot{\Gamma}) \theta_+ \grad\psi(x-\Gamma) \leq (2\Ccalib T+1) \Cgamma \norm{\grad\psi}_\infty \int \theta_+ \,dx +  \norm{\grad\psi}_\infty 2\Ccalib \int |x-\Gamma| \theta_+ \,dx. \]
By assumption, $|x-\Gamma| \theta_+ \leq R \theta_+$, so from this, \eqref{low pass control}, and \eqref{control from cacciopolli} the result follows.  
\end{proof}

This energy inequality is sufficient to prove the De Giorgi Lemmas.  
\vskip.3cm

The first lemma is a local version of the $L^2$ to $L^\infty$ regularization, stating that solutions with small $L^2$ norm in a region will have small $L^\infty$ norm in a smaller region.  

\begin{proposition}[First De Giorgi Lemma] \label{thm:DG1}

Let $\Ccalib$, $\Comega$, and $\Cgamma$, be positive constants. Then there exists a constant $\delta_0>0$ such that the following holds:

Let $\Omega \subseteq \R^2$ be a bounded open set with $C^{2,\beta}$ boundary for some $\beta \in (0,1)$, and let $\Gamma:[-2,0] \to \R^2$ be Lipschitz.  Assume that on $\Omega$ the functions $K_{1/4}$ and $K_1$ (defined in \eqref{Caff Stinga representation}) satisfy the relation
\[ K_{1/4}(x,y) \leq \Comega |x-y|^{3/4} K_1(x,y) \qquad \forall x\neq y \in \Omega. \]

Let $\theta$, $\ulow$, $\uhigh$, and $\Gammal$ solve \eqref{eq:main linear brokendown} on $[-2,0]\times\Omega$, and satisfy $\norm{\Lambda^{-1/4} \uhigh}_{L^\infty([-2,0]\times\Omega)} \leq 6 \Ccalib$ and $\norm{\grad \ulow}_{L^\infty([-2,0]\times\Omega)} \leq 2\Ccalib$.  Denote $\gamma := \Gammal-\Gamma$ and assume $\norm{\dot{\gamma}}_{L^\infty([-2,0])} \leq \Cgamma$ and $\gamma(0) = 0$.  
\vskip.3cm

If
\[ \theta(t,x) \leq 2 + \paren{|x-\Gamma(t)|^{1/4}-2^{1/4}}_+ \qquad \forall t\in[-2,0], x \in \Omega \setminus B_2(\Gamma(t)) \]
and
\[ \int_{-2}^0 \int_{\Omega\cap B_2(\Gamma(t))} (\theta)_+^2 \,dxdt \leq \delta_0 \]
then
\[ \theta(t,x) \leq 1 \qquad \forall t \in [-1,0], x \in \Omega \cap B_1(\Gamma(t)). \]

\end{proposition}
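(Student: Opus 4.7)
The argument is a De~Giorgi iteration anchored on the reference path $\Gamma$. The plan is to pick a decreasing sequence of radii $r_k = 1+2^{-k}$, times $T_k = -1-2^{-k}$, and truncation heights $L_k = 1-2^{-k}$, and to choose smooth non-negative barriers $\psi_k:\R^2\to\R$ satisfying $\psi_k\equiv L_k$ on $B_{r_{k+1}}$ and $\psi_k(y)\geq 2+(|y|^{1/4}-2^{1/4})_+$ for $|y|\geq r_k$, with $\norm{\grad\psi_k}_\infty$, $\norm{D^2\psi_k}_\infty$, and $[\psi_k]_{1/4}$ all growing only polynomially in $2^k$ (the transition zone has width $\sim 2^{-k}$ and a jump of size at most $2$). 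The tail hypothesis on $\theta$ then ensures that
\[ \theta_k(t,x) := \paren{\theta(t,x) - \psi_k(x-\Gamma(t))}_+ \]
is supported in $B_{r_k}(\Gamma(t))\cap\Omega$, which is precisely the support condition of Lemma~\ref{thm:energy inequality}.

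Applying Lemma~\ref{thm:energy inequality} to $\theta_k$ and dropping the non-negative term $-\int\Lambda^{1/2}\theta_k\,\Lambda^{1/2}(\psi_k(\cdot-\Gamma)-\theta)_+$ (non-negativity by Lemma~\ref{thm:Lambda stuff}\eqref{thm:disjoint}), one obtains $\ddt\int\theta_k^2 + \int|\Lambda^{1/2}\theta_k|^2 \leq C_k(\int\indic{\theta\geq\psi_k}+\int\theta_k+\int\theta_k^2)$ with $C_k=C\,2^{Ak}$ universal. The $\int\theta_k$ term is absorbed via Cauchy-Schwarz $\int\theta_k \leq \tfrac{1}{2}\int\indic{\theta\geq\psi_k}+\tfrac{1}{2}\int\theta_k^2$. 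Integrating from a time $s\in[T_k,T_{k+1}]$ at which $\int\theta_k(s)^2\leq 2^{k+1}\int_{T_k}^{T_{k+1}}\!\!\int\theta_k^2$ to time $0$ yields
\[ U_{k+1} \leq C_k \paren{\int_{T_k}^0\!\!\int\theta_k^2 + \int_{T_k}^0\!\!\int\indic{\theta\geq\psi_k}}, \qquad U_k := \sup_{t\in[T_k,0]}\int\theta_k(t)^2 + \int_{T_k}^0\!\!\int\abs{\Lambda^{1/2}\theta_k}^2. \]

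To close the iteration I use two scale-invariant ingredients. First, since $\psi_{k+1}-\psi_k\geq 2^{-k-1}$ on the support of $\theta_{k+1}$, one has $\theta_k\geq 2^{-k-1}$ wherever $\theta_{k+1}>0$, so Chebyshev gives $\indic{\theta_{k+1}>0}\leq 4^{k+1}\theta_k^2$, converting the indicator term into a $\theta_k^2$ term. Second, Proposition~\ref{thm:hadamard 3 lines} embeds $\HD^{1/2}(\Omega)\hookrightarrow H^{1/2}(\R^2)\hookrightarrow L^4(\R^2)$ with constant independent of $\Omega$; interpolating with the $L^\infty_tL^2_x$ part of $U_k$ gives the scale-invariant bound $\norm{\theta_k}_{L^3_{t,x}}^3\leq C U_k^{3/2}$, and then H\"{o}lder $\int\theta_k^2\leq|\{\theta_k>0\}|^{1/3}\norm{\theta_k}_{L^3_{t,x}}^2$ together with a further Chebyshev bound on the support size yields a super-linear recursion of the form
\[ U_{k+1} \leq C^k U_k^{1+\alpha} \]
for some universal $\alpha>0$. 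The standard nonlinear recursion lemma then produces $\delta_0>0$ such that $U_0\leq\delta_0$ forces $U_k\to 0$, which gives $\theta\leq 1$ on $[-1,0]\times(B_1(\Gamma(t))\cap\Omega)$.

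The main structural obstacle has already been absorbed into Lemma~\ref{thm:energy inequality}: because $\Gammal$ has a Lipschitz constant that cannot be bounded uniformly in scale, the De~Giorgi argument must be anchored on the reference path $\Gamma$ rather than on the true Lagrangian path, with only the deviation $\gamma=\Gammal-\Gamma$ controlled by $\Cgamma$. Beyond that, the crucial point is that the Sobolev embedding constant in Proposition~\ref{thm:hadamard 3 lines} and the kernel comparison constant $\Comega$ are the only domain-dependent quantities that enter, so the recursion constants and the resulting threshold $\delta_0$ depend only on $\Ccalib,\Comega,\Cgamma$ as required for later iteration at arbitrarily small scales.
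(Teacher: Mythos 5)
The proposal is correct and follows essentially the same path as the paper's proof. Both arguments rest on the same three scale-invariant ingredients: the energy inequality of Lemma~\ref{thm:energy inequality}, the $\Omega$-independent Sobolev embedding coming from Proposition~\ref{thm:hadamard 3 lines}, and the Chebyshev (nonlinearization) step to convert level-set measure into a superlinear power of an earlier energy. The only structural difference is presentational: the paper fixes a single spatial barrier $\psi$ (vanishing on $B_1$, equal to $2+\paren{|x|^{1/4}-2^{1/4}}_+$ outside $B_2$) and then delegates the iteration over truncation heights $a_k = 1-2^{-k}$ to the separate Lemma~\ref{thm:DG1 skeleton}, whereas you build a family $\psi_k$ whose flat regions, transition zones, and truncation heights all shrink at once and run the iteration by hand. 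One small bookkeeping imprecision: the quantity that closes into a clean one-step superlinear recursion is the squared mass $E_k := \iint\theta_k^2$, for which one gets $E_{k+1}\leq C^k E_k^{4/3}$ after the Chebyshev and $L^3$ estimates; the energy $U_k := \sup_t\int\theta_k^2 + \iint\abs{\Lambda^{1/2}\theta_k}^2$ you define most naturally satisfies only a two-step recursion, since the Chebyshev bound on $\abs{\{\theta_k>0\}}$ reaches back to $\theta_{k-1}$. This does not affect the conclusion (two-step recursions of this type also give $\delta_0$), so it is a labelling matter rather than a gap.
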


\begin{proof}
Let $\psi$ be such that $\psi = 0$ for $|x| \leq 1$ and $\psi(x) = 2 + \paren{|x|^{1/4}-2^{1/4}}_+$ for $|x|>2$, and let $\grad \psi$ and $D^2 \psi$ be bounded.  

For any constant $a > 0$, we can apply Lemma~\ref{thm:energy inequality} to the function
\[ \theta_a := (\theta(t,x) - \psi(x - \Gamma(t)) - a)_+ \]
and obtain
\[ \ddt \int \theta_a^2 + \int \abs{\Lambda^{1/2} \theta_a}^2 \leq C \paren{ \int \indic{\theta \geq \psi + a} + \int \theta_a + \int \theta_a^2 }. \]

Thus $\theta - \psi(x-\Gamma)$ satisfies the assumptions of Lemma~\ref{thm:DG1 skeleton}.  There exists a constant, which we call $\delta_0$, so that if
\[ \int_{-2}^0 \int \paren{\theta(t,x) - \psi(x-\Gamma(t))}_+ \,dxdt \leq \delta_0 \]
then 
\[ \theta(t,x) \leq 1 + \psi(x-\Gamma(t)) \qquad \forall t \in [-1,0], x \in \Omega. \]

By construction of $\psi$, our result follows immediately.  

\end{proof}

Next, we will prove De Giorgi's second lemma, a quantitative analog of the isoperimetric inequality.  

\begin{proposition}[Second De Giorgi Lemma] \label{thm:DG2}
Let $\Ccalib$, $\Comega$, $\Cgamma$, and $\beta \in (0,1)$ be positive constants. Then there exists a constant $\mu>0$ such that the following holds:

Let $\Omega \subseteq \R^2$ be a bounded open set with $C^{2,\beta}$ boundary for some $\beta \in (0,1)$ , and let $\Gamma:[-5,0] \to \R^2$ be Lipschitz.  Assume that on $\Omega$ the functions $K_{1/4}$ and $K_1$ (defined in \eqref{Caff Stinga representation}) satisfy the relation
\[ K_{1/4}(x,y) \leq \Comega |x-y|^{3/4} K_1(x,y) \qquad \forall x\neq y \in \Omega. \]

Let $\theta$, $\ulow$, $\uhigh$, and $\Gammal$ solve \eqref{eq:main linear brokendown} on $[-5,0]\times\Omega$, and satisfy $\norm{\Lambda^{-1/4} \uhigh}_{L^\infty([-5,0]\times\Omega)} \leq 6 \Ccalib$ and $\norm{\grad \ulow}_{L^\infty([-5,0]\times\Omega)} \leq 2\Ccalib$.  Denote $\gamma := \Gammal - \Gamma$ and assume $\norm{\dot{\gamma}}_{L^\infty([-5,0])} \leq \Cgamma$ and $\gamma(0) = 0$.  
\vskip.3cm

Suppose that for $t \in [-5,0]$ and any $x \in \Omega$,
\[ \theta(t,x) \leq 2 + \paren{|x-\Gamma(t)|^{1/4}-2^{1/4}}_+. \]

Then the three conditions
\begin{align} 
\abs{\{\theta \geq 1\} \cap [-2,0]\times B_2(\Gamma)} &\geq \delta_0/4, \label{mass assumption above} \\
\abs{\{0 < \theta < 1\} \cap [-4,0]\times B_4(\Gamma)} &\leq \mu, \nonumber \\
\abs{\{\theta \leq 0\} \cap [-4,0]\times B_4(\Gamma)} &\geq 2 |B_4| \label{mass assumption below}
\end{align}
cannot simultaneously be met.  
\end{proposition}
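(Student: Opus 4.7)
The plan is to argue by contradiction using a De~Giorgi-type iteration. Assuming all three conditions hold, I will derive a recursive estimate on level-set measures which becomes inconsistent when $\mu$ is small enough, violating \eqref{mass assumption above}.

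First, I fix a smooth non-negative barrier $\psi:\R^2\to\R$ with $\psi\equiv 0$ on $B_4(0)$, $\psi(x)\geq 2+(|x|^{1/4}-2^{1/4})_+$ for $|x|\geq 5$, and $\grad\psi$, $D^2\psi$, $[\psi]_{1/4}$ uniformly bounded. For each level $c\in[1/2, 1)$, the localized truncation $\theta^{(c)}(t,x):=(\theta(t,x)-c-\psi(x-\Gamma(t)))_+$ is compactly supported in $\{|x-\Gamma|\leq 5\}$ by the assumed upper bound on $\theta$, and Lemma~\ref{thm:energy inequality} applies. Integrating the energy inequality over $[-5,0]$ yields uniform bounds on $\sup_t\int(\theta^{(c)})^2$ and $\int|\Lambda^{1/2}\theta^{(c)}|^2$.

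Next, I introduce a geometric sequence of ascending levels $c_k:=1-2^{-k-1}$ (so $c_0=1/2$, $c_k\uparrow 1$) and nested space-time cylinders $Q_k=[-T_k,0]\times B_{R_k}(\Gamma)$ with $T_k\downarrow 2$ and $R_k\downarrow 2$, equipped with cutoffs $\varphi_k$. The standard De~Giorgi iteration---combining the local energy inequality \eqref{local condition}, the parabolic Sobolev embedding $L^\infty_t L^2_x\cap L^2_t\HD^{1/2}_x\hookrightarrow L^3_{t,x}$ (derived from Proposition~\ref{thm:hadamard 3 lines} together with $H^{1/2}(\R^2)\hookrightarrow L^4(\R^2)$ by interpolation), and Chebyshev's inequality exploiting the increment $c_{k+1}-c_k=2^{-k-2}$---produces a recursive inequality
$$\alpha_{k+1}\leq C\,2^{\beta k}\alpha_k^{1+\gamma},\qquad \alpha_k:=\iint(\theta^{(c_k)})^2\,\varphi_k,$$
with $\beta,\gamma>0$ universal.

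The crucial step is to verify that the initial seed $\alpha_0$ can be bounded by a quantity going to zero with $\mu$: on the intermediate slab $\{c_0<\theta<1\}$ the truncation $\theta^{(c_0)}$ is bounded by $1-c_0=1/2$ and has measure $\leq\mu$, while on the bulk $\{\theta\geq 1\}$ the truncation must be suppressed by a spatial weighting scheme that exploits assumption \eqref{mass assumption below} to create dissipation. For $\mu$ sufficiently small the recursion forces $\alpha_k\to 0$, hence $|\{\theta\geq 1\}\cap[-2,0]\times B_2(\Gamma)|=0$, contradicting \eqref{mass assumption above}. The main obstacle is precisely this control of $\alpha_0$ by $\mu$, since the bulk $\{\theta\geq 1\}$ is large by assumption \eqref{mass assumption above} and is not a priori controlled by $\mu$. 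Excising it requires a non-local Poincar\'e-type inequality for $(\theta-c_0-\psi)_+$ weighted against the set $\{\theta\leq 0\}$ provided by \eqref{mass assumption below}, using the Caffarelli-Stinga kernel lower bound from Proposition~\ref{thm:Caff Stinga representation} in the interior of $\Omega$ to convert the abundance of the zero set into extra dissipation of the truncation.
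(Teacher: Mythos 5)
Your approach is genuinely different from the paper's: you try to prove the second De~Giorgi lemma directly via an iteration over levels $c_k=1-2^{-k-1}$, whereas the paper argues by contradiction and compactness. The paper supposes the conclusion fails along a sequence $\theta_n$ with $|\{0<\theta_n<1\}\cap[-4,0]\times B_4(\Gamma_n)|\le 1/n$, establishes uniform $L^2(H^{1/2})$ bounds on the squared truncation (for compactness in space, via Proposition~\ref{thm:hadamard 3 lines}) and uniform measure bounds on its time derivative (via the suitability inequality~\eqref{local condition}), extracts an Aubin--Lions limit $v$, and observes that $v$ either vanishes or exceeds $[1-\psi]^2$ at a.e.\ point because $H^{1/2}(\R^2)$ contains no indicator-type jump discontinuities; combined with~\eqref{ddt theta bounded} and the mass assumptions this is a contradiction.

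However, your sketch has a genuine gap, and you identify it yourself: the seed $\alpha_0$ of the recursion is \emph{not} small in $\mu$. The set $\{\theta\ge c_0\}\supseteq\{\theta\ge 1\}$ has measure at least $\delta_0/4$ by assumption~\eqref{mass assumption above}, and that is not controlled by $\mu$. The ``non-local Poincaré-type inequality weighted against $\{\theta\le 0\}$'' that you invoke to excise the bulk is essentially the content of the lemma you are trying to prove, and you neither state nor prove it. Moreover, the specific ingredient you gesture at---a lower bound on the Caffarelli--Stinga kernel from Proposition~\ref{thm:Caff Stinga representation}---is not uniform in the relevant sense: the proof of that proposition in \cite{CaSt} gives $K_s(x,y)\gtrsim|x-y|^{-2-s}\min\bigl(1,\eigen{0}(x)\eigen{0}(y)/|x-y|^2\bigr)$, which degenerates as $x$ or $y$ approach $\del\Omega$ because $\eigen{0}$ vanishes there and, importantly, the constant depends on $\Omega$. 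Since the second De~Giorgi lemma must hold uniformly across rescaled domains (it is applied at every level of zoom in Section~\ref{sec:holder}), a Poincaré inequality relying on such a pointwise kernel lower bound cannot be used without significant additional work to make it scale-independent up to the boundary. This boundary degeneracy is exactly the difficulty the paper's compactness argument is designed to bypass: compactness only needs scale-independent \emph{upper} bounds on $K$ (which Proposition~\ref{thm:Caff Stinga representation} provides) combined with the qualitative fact that $H^{1/2}$ excludes jump discontinuities. Unless you can formulate and prove the boundary-uniform non-local isoperimetric inequality your argument hinges on, the proposal does not close.
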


Here $\delta_0$ is the constant from Proposition~\ref{thm:DG1}, which of course depends on $\Ccalib$, $\Cgamma$, and $\Comega$.  

\begin{proof}
Suppose that the proposition is false.  Then there must exist, for each $n \in \N$, a bounded open set $\Omega_n$ with $C^{2,\beta_n}$ boundary for $\beta_n \in (0,1)$, a Lipschitz path $\Gamma_n:[-5,0]\to \R^2$, a function $\theta_n: [-5,0]\times \Omega_n \to \R$, functions $\ulow^n, \uhigh^n: [-5,0]\times\Omega_n \to \R^2$, and paths $\Gammal^n = \Gamma_n + \gamma_n:[-5,0]\to\R^2$ which solve \eqref{eq:main linear brokendown} and satisfy all of the the assumptions of our proposition (with the same constants $\Ccalib$, $\Cgamma$, and $\Comega$), except that
\begin{equation} \label{mass assumption between} \abs{\{0 < \theta_n < 1\} \cap [-4,0]\times B_4(\Gamma_n)} \leq 1/n. \end{equation}
\vskip.3cm

Let $\psi:\R^2 \to \R$ be a smooth function which vanishes on $B_2$ such that $\psi(x) = 2 + \paren{|x|^{1/4}-2^{1/4}}_+$ for $|x|>3$.  

Fix $n$ and define 
\[ \theta_+ := \paren{\theta_n - \psi(x-\Gamma_n)}_+. \]
Then $\theta_+$ is supported on $\Omega \cap B_3(\Gamma_n)$ and is less than $2 + 3^{1/4} - 2^{1/4} \leq 3$ everywhere.  

Our goal is to bound the derivatives of $\theta_+^2$ so that we can apply a compactness argument to the sequence $\theta_n$.  (It is the calculations in Step 2 below in which it becomes necessary to consider $\theta_+^2$ instead of $\theta_+$.)  

The remainder of the proof is divided in three steps.  First we show that the sequence of $\theta_+$ is compact in space, then we show that it is compact in time, and finally we show that the limiting function implies a contradiction.  

\vskip.3cm
\step{Compactness in space}

Apply the energy inequality Lemma \ref{thm:energy inequality} to $\theta$ and $\psi(x-\Gamma_n)$, and find that for some $C$ independent of $n$
\begin{equation} \label{ddt theta bounded} \ddt \int \theta_+^2 \leq C. \end{equation}
Moreover, by integrating Lemma~\ref{thm:energy inequality} in time from $-5$ to $s \in [-4,0]$ and taking a supremum over $s$, we find
\begin{equation}\label{DG2 energy} \sup_{[-4,0]} \int \theta_+^2 + \int_{-4}^0 \int \abs{\Lambda^{1/2}\theta_+}^2 + \int_{-4}^0 \int \Lambda^{1/2}\theta_+ \Lambda^{1/2}\theta_- \leq C. \end{equation}
This proves in particular that $\theta_+ \in L^2(-4,0; \HD^{1/2}(\Omega))$ is uniformly bounded.  

Furthermore, $\norm{\theta_+^2}_{L^2(-4,0;\HD^{1/2}(\Omega_n))}$ is uniformly bounded because
\begin{align*} 
\norm{\Lambda^{1/2}(\theta_+^2)}_2^2 &= \iint [\theta_+(x)^2 - \theta_+(y)^2]^2 K + \int \theta_+^4 B 
\\ &\leq 2\iint \theta_+(x)^2 [\theta_+(x)-\theta_+(y)]^2 K + 2\iint \theta_+(y)^2[\theta_+(x)-\theta_+(y)]^2 K + \norm{\theta_+}_\infty^2 \int \theta_+^2 B
\\ &\leq C \norm{\theta_+}_\infty^2 \norm{\theta_+}_{\HD^{1/2}}^2.  
\end{align*}

By Proposition \ref{thm:hadamard 3 lines}, for $E$  the extension-by-zero operator from $L^2(\Omega_n)$ to $L^2(\R^2)$,
\begin{equation} \label{theta3 compact in space} \norm{ E \theta_+^2}_{L^2(-4,0; H^{1/2}(\R^2))} \leq C \end{equation}
where $C$ does not depend on $n$.  

\vskip.3cm
\step{Compactness in time}

Let $\varphi \in C_0^\infty([-4,0]; C^\infty(\Omega))$ a test function.  
Since each $\theta_n$ and $\uhigh^n + \ulow^n$ is a suitable solution to \eqref{eq:main linear} on $[-5,0]\times\Omega_n$ by assumption, we can apply the inequality \eqref{local condition} to find that, for some constant $C$ independent of $n$ and of $\varphi$, on $[-4,0]\times \Omega_n$
\begin{equation}\label{time compactness setup}
\begin{aligned} 
\iint \varphi \del_t \theta_+^2 + \iint \varphi \dot{\Gamma}_n\cdot\grad\theta_+^2 &\leq 
\iint \theta_+^2 \paren{\ulow^n - \dot{\Gamma}_n + \uhigh^n}\cdot\grad\varphi 
- 2 \iint \varphi \theta_+ \paren{\ulow^n - \dot{\Gamma}_n + \uhigh^n} \cdot \grad\psi 
\\ & + C \norm{\varphi}_{C^0(C^2)} \paren{ 1 + \int_{-5}^0 \abs{ \int \theta_+ \paren{\ulow^n - \dot{\Gamma}_n + \uhigh^n} \cdot \grad\psi} }.
\end{aligned}\end{equation}

For the low pass terms, as in the proof of Lemma~\ref{thm:energy inequality}, we have $\abs{\ulow^n(t,x) - \dot{\Gamma}_n(t)} \leq (1+8\kappa)\Cgamma + 6\kappa$ for $t \in [-4,0]$ and $x \in \supp(\theta_+) \subseteq B_3(\Gamma_n(t))$.  Thus for $t\in[-4,0]$ we have for $C$ independent of $n$ and $\varphi$
\begin{equation}\begin{aligned} \label{low pass bounds}
\int \paren{\ulow^n - \dot{\Gamma}_n}\cdot\paren{\theta_+^2\grad\varphi} & \leq C \norm{\grad\varphi}_{L^\infty(\Omega)}, \\
\int \paren{\ulow^n - \dot{\Gamma}_n}\cdot\paren{\theta_+\varphi\grad\psi} & \leq C \norm{\varphi}_{L^\infty(\Omega)}, \\
\int \paren{\ulow^n - \dot{\Gamma}_n}\cdot\paren{\theta_+\grad\psi} & \leq C.
\end{aligned} \end{equation}

For the high pass terms, we have $\uhigh^n$ uniformly bounded in $\dot{W}^{-1/4,\infty}$.  From step 1, we know $\theta_+^2$ is uniformly bounded in $L^2(-4,0;\HD^{1/2})$ so, by Lemma~\ref{thm:Lambda stuff} parts \ref{thm:L1 of Lambda1/4 bounded} and \ref{thm:extra product rule}, there is a constant $C$ independent of $n$ and $\varphi$ such that
\begin{equation}\begin{aligned} \label{high pass bounds}
\iint \uhigh^n \cdot (\theta_+^2 \grad\varphi) &\leq C \paren{\norm{\grad\varphi}_{C^0(-4,0;L^\infty(\Omega))} + \norm{\varphi}_{C^0(-4,0;C^2(\Omega))}}, \\
\iint \uhigh^n \cdot (\theta_+ \varphi\grad\psi) &\leq C \paren{\norm{\varphi}_{C^0(-4,0;L^\infty(\Omega))} + \norm{\varphi}_{C^0(-4,0;C^1(\Omega))}}, \\
\int_{-5}^0 \abs{\int \uhigh^n \cdot (\theta_+ \grad\psi)} &\leq C.
\end{aligned} \end{equation}

Plugging these six bounds into \eqref{time compactness setup}, for a constant $C$ independent of $n$ and $\varphi$, for any $\varphi$ nonnegative we have
\begin{equation} \label{theta3 compact in time} \int_{-4}^0 \int_{\Omega_n} \paren{\del_t \theta_+^2 + \dot{\Gamma}_n\cdot\grad\theta_+^2}\varphi \,dxdt \leq C \norm{\varphi}_{C^0(-4,0; C^2(\Omega_n))}. \end{equation}

Note that
\[ \int_{-4}^0 \int_{\Omega_n} \paren{\del_t \theta_+^2 + \dot{\Gamma}_n\cdot\grad\theta_+^2} \,dxdt = \theta_+(0,\Gamma_n(0))^2 - \theta_+(-4,\Gamma_n(-4))^2 \]
is uniformly bounded above and below.  Therefore, by decomposing $\varphi = (\varphi + \norm{\varphi}_{C^0}) - \norm{\varphi}_{C_0}$ into a non-negative smooth function plus a constant, we can see that \eqref{theta3 compact in time} holds for general $\varphi$.  

\vskip.3cm
\step{Taking the limit}

We wish to analyze the limiting behavior of $\theta_+^2$ in the vicinity of $\Gamma_n$.  First we shift these functions following $\Gamma_n$ and
define new functions on $[-4,0] \times \R^2$ by
\[ v_n(t,x) := \begin{cases}
\theta_+(t, x + \Gamma_n(t))^2, & x + \Gamma_n(t) \in \Omega_n, \\
0, & x + \Gamma_n(t) \notin \Omega_n.
\end{cases} \]
Each $v_n$ is supported on $|x| \leq 3$, and
\begin{equation} \label{definition of v_n} v_n(t,x) = \paren{\theta_n(t, x + \Gamma_n(t)) - \psi(x)}_+^2 \end{equation}
whenever the right hand side is defined.
\vskip.3cm
Note that
\[ \del_t v_n(t,x) = \del_t \theta_+^2(t,x+\Gamma_n) + \dot{\Gamma}_n \cdot \grad \theta_+^2(t,x+\Gamma_n). \]

For $C$ independent of $n$, we know from \eqref{theta3 compact in space} that
\[ \norm{ v_n }_{L^2(-4,0; H^{1/2}(\R^2)} \leq C \]
and from \eqref{theta3 compact in time} that
\[ \norm{ \del_t v_n }_{\mathcal{M}(-4,0; C^{-2}(\Omega))} \leq C \]
where $\mathcal{M}$ is the space of Radon measures with total-variation norm and $C^{-2}(\Omega)$ is the dual of $C^2(\Omega)$.  

Therefore, by the Aubin-Lions Lemma, the set $\{v_n\}_n$ is compactly embedded in\\ $L^2([-4,0]\times\R^2)$.  Up to a subsequence, there is a function $v \in L^2([-4,0]\times\R^2)$ such that
\[ v_n \xrightarrow{L^2} v. \]
\vskip.3cm

We know that $v \in L^\infty$, $\supp(v) \subseteq [-4,0]\times B_3(0)$, and $v \in L^2(H^{1/2})$ because these properties hold uniformly on $v_n$.  

By \eqref{ddt theta bounded}
\begin{equation} \label{ddt v bounded} \ddt \int_{\R^2} v_n \,dx = \ddt \int_{\Omega_n} \theta_+^2 \,dx \leq C \end{equation}
so the same must be true of $v$, for $\ddt$ interpreted in the sense of distributions.  

By \eqref{mass assumption above}, \eqref{mass assumption between}, and \eqref{mass assumption below} applied to $v_n$ (recalling the relation \eqref{definition of v_n}), we conclude that
\begin{equation} \label{mass assumptions v} \begin{cases} \begin{aligned}
\abs{\{v \geq 1\} \cap [-2,0]\times B_2(0)} &\geq \delta_0/4, \\
\abs{\{0 < v < [1-\psi]^2\} \cap [-4,0]\times B_4(0)} &\leq 0, \\
\abs{\{v \leq 0\} \cap [-4,0]\times B_4(0)} &\geq 2|B_4|. \end{aligned}
\end{cases} \end{equation}


For any $(t,x) \in [-4,0]\times B_4(0)$, either $v(t,x) \geq [1 - \psi(x)]^2$ or else $v(t,x) = 0$.  In fact, since $\norm{v(t,\cdot)}_{H^{1/2}} < \infty$ for almost every $t$ and $H^{1/2}$ does not contain functions with jump discontinuities, the function $v$ is either identically 0 or else $\geq [1-\psi(x)]^2$ at each $t$.  

Thus $\int v(t,x) \,dx$ is either 0 or else $\geq \int [1-\psi(x)]^2 \,dx > 0$ at each $t$.  By \eqref{ddt v bounded} and \eqref{mass assumptions v}, $v$ must be identically zero for all $t > -2$ but also must be non-zero for some $t > -2$, which is a contradiction.  

Our assumption that the sequence $\theta_n$ exists must have been false, and the proposition must be true.  

\end{proof}

\vskip1cm
\section{A Decrease in Oscillation} \label{sec:harnack}

We combine the two De Giorgi lemmas (Propositions \ref{thm:DG1} and \ref{thm:DG2}) to produce an oscillation lemma.  This result is similar to the weak Harnack inequality for harmonic functions.  As in the previous section, all of the following results must be independent of the size of $\Omega$, and any assumptions made on $\Omega$ must be scaling invariant.  

\begin{lemma}[Oscillation Lemma] \label{thm:oscillation general}
Let $\Ccalib$, $\Comega$, and $\Cgamma$, be positive constants. Then there exists a constant $k_0>0$ such that the following holds:

Let $\Omega \subseteq \R^2$ be a bounded open set with $C^{2,\beta}$ boundary for some $\beta \in (0,1)$, and let $\Gamma:[-5,0] \to \R^2$ be Lipschitz.  Assume that on $\Omega$ the functions $K_{1/4}$ and $K_1$ (defined in \eqref{Caff Stinga representation}) satisfy the relation
\[ K_{1/4}(x,y) \leq \Comega |x-y|^{3/4} K_1(x,y) \qquad \forall x\neq y \in \Omega. \]

Let $\theta$, $\ulow$, $\uhigh$, and $\Gammal$ solve \eqref{eq:main linear brokendown} on $[-5,0]\times\Omega$, and satisfy $\norm{\Lambda^{-1/4} \uhigh}_{L^\infty([-5,0]\times\Omega)} \leq 6 \Ccalib$ and $\norm{\grad \ulow}_{L^\infty([-5,0]\times\Omega)} \leq 2\Ccalib$.  Denote $\gamma := \Gammal - \Gamma$ and assume $\norm{\dot{\gamma}}_{L^\infty([-5,0])} \leq \Cgamma$ and $\gamma(0) = 0$.  
\vskip.3cm

Suppose that for all $t \in [-5,0]$ and any $x \in \Omega$
\begin{equation} \label{oscillation boundedness} \theta(t,x) \leq 2 + 2^{-k_0} \paren{|x-\Gamma(t)|^{1/4}-2^{1/4}}_+, \end{equation}
and that
\[ \abs{\{\theta \leq 0\} \cap [-4,0]\times B_4(\Gamma)} \geq 2|B_4|. \]

Then for all $t \in [-1,0]$, $x \in \Omega \cap B_1(\Gamma)$ we have
\[ \theta(t,x) \leq 2 - 2^{-k_0}. \]
\end{lemma}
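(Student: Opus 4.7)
The plan is to prove the lemma by contradiction, combining $k_0$ applications of a rescaled version of Proposition~\ref{thm:DG2} at dyadic levels approaching $2$ with the contrapositive of a rescaled version of Proposition~\ref{thm:DG1}.

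First I would define the dyadic levels $L_k := 2 - 2^{-k}$ for $k \geq -1$, with gap $\eta_k := L_k - L_{k-1} = 2^{-k}$, so $L_{-1} = 0$ and $L_k \nearrow 2$. For each integer $k$ with $0 \leq k \leq k_0$ I will work with the following rescaled forms of the two De Giorgi lemmas:
\begin{itemize}
\item[(RDG1$_k$)] If $\int_{-2}^0 \int_{\Omega \cap B_2(\Gamma(t))} (\theta - L_{k-1})_+^2 \,dxdt \leq \delta_0 \eta_k^2$, then $\theta \leq L_k$ on $[-1,0] \times B_1(\Gamma)$.
\item[(RDG2$_k$)] The three conditions $|\{\theta \geq L_k\} \cap [-2,0] \times B_2(\Gamma)| \geq \delta_0/4$, $|\{L_{k-1} < \theta < L_k\} \cap [-4,0] \times B_4(\Gamma)| \leq \mu$, and $|\{\theta \leq L_{k-1}\} \cap [-4,0] \times B_4(\Gamma)| \geq 2|B_4|$ cannot all hold simultaneously,
\end{itemize}
with the same universal constants $\delta_0$ and $\mu$ furnished by Propositions~\ref{thm:DG1} and~\ref{thm:DG2}. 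I would obtain both by rerunning the proofs of those propositions with the shifted-and-scaled barrier $\Psi_k(t,x) := L_{k-1} + \eta_k\,\psi(x - \Gamma(t))$ in place of $\psi(x-\Gamma(t))$ and then passing to the rescaled function $v := \eta_k^{-1}(\theta - \Psi_k)_+$. The growth hypothesis $\theta \leq 2 + 2^{-k_0}(|x-\Gamma|^{1/4} - 2^{1/4})_+$ guarantees that $\Psi_k$ dominates $\theta$ outside $B_3(\Gamma)$ whenever $k \leq k_0$, so $v$ is bounded by $2$ and supported in $B_3(\Gamma)$. The additive shift $L_{k-1}$ is invisible to $\|\nabla\Psi_k\|_\infty$, $\|D^2\Psi_k\|_\infty$, and $[\Psi_k]_{1/4}$, while the factor $\eta_k$ in front of $\psi$ multiplies each of these norms by $\eta_k$. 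Tracking these scalings through Lemma~\ref{thm:energy inequality}, the coefficients of $\int \indic{\theta \geq \Psi_k}$, $\int (\theta - \Psi_k)_+$, and $\int (\theta - \Psi_k)_+^2$ scale like $\eta_k^2$, $\eta_k$, and a $k$-independent constant respectively, so that after substituting $v$ and dividing by $\eta_k^2$ the energy inequality for $v$ has constants independent of $k$. The remainder of the arguments in Propositions~\ref{thm:DG1} and~\ref{thm:DG2} then apply verbatim to $v$.

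With these rescaled lemmas in hand, I assume for contradiction that $\theta(t_0, x_0) > L_{k_0}$ for some $(t_0, x_0) \in [-1,0] \times B_1(\Gamma)$. Condition (c) of (RDG2$_k$) holds for every $k \in [0, k_0]$ because $L_{k-1} \geq 0$ implies $\{\theta \leq 0\} \subseteq \{\theta \leq L_{k-1}\}$ and the former is hypothesized to have measure at least $2|B_4|$. For condition (a), fix $k \in [0, k_0-1]$, so that $L_{k+1} \leq L_{k_0}$ and $\theta(t_0,x_0) > L_{k+1}$; the contrapositive of (RDG1$_{k+1}$) gives $\int_{-2}^0 \int_{B_2(\Gamma)} (\theta - L_k)_+^2 > \delta_0 \eta_{k+1}^2$. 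On $B_2(\Gamma)$ we have $\theta \leq 2$, hence $(\theta - L_k)_+ \leq 2\eta_{k+1}$ on $\{\theta > L_k\}$; substituting this pointwise bound and dividing by $4\eta_{k+1}^2$ yields $|\{\theta > L_k\} \cap [-2,0]\times B_2(\Gamma)| > \delta_0/4$, which implies (a) of (RDG2$_k$). Applying (RDG2$_k$) at each $k \in [0, k_0-1]$, since (a) and (c) both hold, condition (b) must fail, giving $|\{L_{k-1} < \theta < L_k\} \cap [-4,0]\times B_4(\Gamma)| > \mu$ at every such $k$. These sets are pairwise disjoint and contained in $[-4,0]\times B_4(\Gamma)$, whose total measure is $64\pi$, forcing $k_0\mu < 64\pi$. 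Choosing the universal constant $k_0 := \lceil 64\pi/\mu \rceil + 1$ then produces the required contradiction.

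The main obstacle I expect is justifying (RDG1$_k$) and (RDG2$_k$) uniformly in $k$. The shift $L_{k-1}$ is innocuous, since Proposition~\ref{thm:suitability} admits an arbitrary additive barrier, but the $\eta_k$-rescaling demands careful bookkeeping of how each constant in the proof of Lemma~\ref{thm:energy inequality} depends on the Lipschitz, Hessian, and $C^{1/4}$ norms of the barrier, especially in the high-frequency velocity term where Lemma~\ref{thm:Lambda stuff}\eqref{thm:extra product rule} is invoked; the required cancellation in the rescaled energy inequality is precisely what pins down the scaling of $\delta_0\eta_k^2$ in (RDG1$_k$).
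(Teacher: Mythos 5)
Your proposal is correct and follows the same essential strategy as the paper: pump measure from above level $L_k$ to above level $L_{k-1}$ by iterating Proposition~\ref{thm:DG2} at dyadic levels, then apply Proposition~\ref{thm:DG1} to conclude. The only genuine difference is organizational. You set up rescaled lemmas (RDG1$_k$), (RDG2$_k$) by replacing the barrier with $\Psi_k = L_{k-1} + \eta_k\psi(\cdot - \Gamma)$ and working with $v = \eta_k^{-1}(\theta - \Psi_k)_+$, whereas the paper keeps Propositions~\ref{thm:DG1} and \ref{thm:DG2} as stated and instead feeds them the affinely rescaled functions $\theta_k := 2 + 2^k(\theta - 2)$. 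These are literally the same change of variables; indeed $(\theta_k - \psi(\cdot - \Gamma))_+ = \eta_k^{-1}\big(\theta - L_{k-1} - \eta_k\psi(\cdot - \Gamma)\big)_+$, so your $v$ is precisely $(\theta_k - \psi)_+$. The paper also negates the contradiction at a different juncture: it assumes $|\{\theta_{k_0-1}\geq 1\}\cap[-2,0]\times B_2(\Gamma)|\geq\delta_0/4$, runs DG2 to overflow $4|B_4|$, and then separately verifies that DG1 applies to $\theta_{k_0}$; you negate the endpoint $\theta\leq L_{k_0}$ directly and obtain condition (a) at every level from the contrapositive of (RDG1$_{k+1}$) together with the pointwise bound $\theta\leq 2$ on $B_2(\Gamma)$. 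Both are valid.

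Your flagged ``main obstacle'' — verifying that the constants in Lemma~\ref{thm:energy inequality} scale correctly so that $\delta_0$ and $\mu$ are level-independent — is a legitimate concern that the paper largely sidesteps by observing that each $\theta_k$ still satisfies the suitability energy inequalities (condition (2) of the definition of suitable solutions explicitly admits the vertical scaling $\lambda = 2^k$ together with an arbitrary additive shift absorbed into the barrier $\Psi$). With that observation, (RDG1$_k$) and (RDG2$_k$) come for free as instances of the original propositions, and your constant-tracking through $\|\nabla\Psi_k\|_\infty$, $\|D^2\Psi_k\|_\infty$, $[\Psi_k]_{1/4}$ is avoided. Your bookkeeping (coefficients $\eta_k^2$, $\eta_k$, $1$ on the three right-hand-side terms, cancelling against $\eta_k^{-2}$ after substituting $v$) is correct and is the right verification if one prefers not to invoke the scaling built into the suitability definition; but note that the PDE itself is not preserved by an additive shift (since $\Lambda c\neq 0$ for constant $c$ on a bounded domain), so the justification must indeed run through the energy inequalities rather than through $\theta_k$ solving \eqref{eq:main linear} in the strict sense — a point your proposal handles more explicitly than the paper's proof text does.
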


\begin{proof}
Let $\mu$ and $\delta_0$ as in Proposition \ref{thm:DG2}, and take $k_0$ large enough that $(k_0-1) \mu > 4 |B_4|$.  

Consider the sequence of functions,
\[ \theta_k(t,x) := 2 + 2^k (\theta(t,x) - 2). \]
That is, $\theta_0 = \theta$ and as $k$ increases, we scale vertically by a factor of 2 while keeping height 2 as a fixed point.  Note that since $\theta$ satisfies \eqref{oscillation boundedness}, each $\theta_k$ for $k \leq k_0$ and $(t,x) \in [-5,0] \times \Omega$ satisfies
\[ \theta_k(t,x) \leq 2 + \paren{|x-\Gamma(t)|^{1/4}-2^{1/4}}_+. \]
This is precisely the assumption in Proposition \ref{thm:DG2}.  

Note also that
\begin{equation} \label{oscillation amount below} \abs{\{\theta_k \leq 0\} \cap [-4,0]\times B_4(\Gamma)} \end{equation}
is an increasing function of $k$, and hence is greater than $2|B_4|$ for all $k$.  

Assume, for means of contradiction, that
\begin{equation} \label{oscillation amount above} \abs{\{1 \leq \theta_k \} \cap [-2,0]\times B_2(\Gamma)} \geq \delta_0/4 \end{equation}
for $k = k_0-1$.  Since this quantity is decreasing in $k$, it must then exceed $\delta_0/4$ for all $ k < k_0$ as well.  

Applying Proposition \ref{thm:DG2} to each $\theta_k$, we conclude that 
\[ \abs{\{0 < \theta_k < 1\} \cap [-4,0]\times B_4(\Gamma)} \geq \mu. \]
In particular, this means that the quantity \eqref{oscillation amount below} increases by atleast $\mu$ every time $k$ increases by 1. By choice of $k_0$ and the fact that quantity \eqref{oscillation amount below} is trivially bounded by $4|B_4|$, we obtain a contradiciton.  Therefore, the assumption \eqref{oscillation amount above} must fail for $k = k_0-1$.  
\vskip.3cm

Therefore $\theta_{k_0}$ must satisfy the assumptions of Proposition \ref{thm:DG1}.  In particular, we conclude that
\[ \theta_{k_0}(t,x) \leq 1 \qquad \forall t \in [-1,0], x \in \Omega \cap B_1(\Gamma). \]

For the original function $\theta$, this means that
\[ \theta(t,x) \leq 2 - 2^{-k_0} \qquad \forall t \in [-1,0], x \in \Omega \cap B_1(\Gamma). \]
\end{proof}

By assuming that $\theta$ is small near $x = \Gamma(t)$, we have shown that the oscillation of $\theta$ is decreased in a smaller neighborhood of $\Gamma(t)$.  However, our goal is to control the oscillation near $x = \Gammal(t))$.  Therefore we will prove the following proposition:

\begin{proposition}[Oscillation Lemma with shift] \label{thm:oscillation shifted}
Let $\Ccalib$, $\Comega$, and $\Cgamma$, be positive constants, and let $k_0$ be as in Lemma~\ref{thm:oscillation general}.  Then there exists a constant $\lambda > 0$ such that the following holds:

Let $\Omega \subseteq \R^2$ be a bounded open set with $C^{2,\beta}$ boundary for some $\beta \in (0,1)$, and let $\Gamma:[-5,0] \to \R^2$ be Lipschitz.  Assume that on $\Omega$ the functions $K_{1/4}$ and $K_1$ (defined in \eqref{Caff Stinga representation}) satisfy the relation
\[ K_{1/4}(x,y) \leq \Comega |x-y|^{3/4} K_1(x,y) \qquad \forall x\neq y \in \Omega. \]

Let $\theta$, $\ulow$, $\uhigh$, and $\Gammal$ solve \eqref{eq:main linear brokendown} on $[-5,0]\times\Omega$, and satisfy $\norm{\Lambda^{-1/4} \uhigh}_{L^\infty([-5,0]\times\Omega)} \leq 6 \Ccalib$ and $\norm{\grad \ulow}_{L^\infty([-5,0]\times\Omega)} \leq 2\Ccalib$.  Denote $\gamma := \Gammal - \Gamma$ and assume $\norm{\dot{\gamma}}_{L^\infty([-5,0])} \leq \Cgamma$ and $\gamma(0) = 0$.  
\vskip.3cm

Suppose that for all $t \in [-5,0]$ and any $x \in \Omega$
\begin{equation} \label{theta bounded everywhere} |\theta(t,x)| \leq 2 + 2^{-k_0} \paren{|x-\Gamma(t)|^{1/4}-2^{1/4}}_+ \end{equation}
and that
\[ \abs{\{\theta \leq 0\} \cap [-4,0]\times B_4(\Gamma)} \geq 2|B_4|. \]

Then for any $\eps \in (0,1/5]$ such that
\begin{equation} \label{Cgamma and eps for harnack} 5 \Cgamma \leq \eps\n - 3 \end{equation}
we have
\[ \abs{\frac{2}{2-\lambda} \bracket{\theta(\eps t, \eps x) + \lambda}} \leq 2 + 2^{-k_0} \paren{|x-\eps\n\Gammal(\eps t)|^{1/4}-2^{1/4}}_+. \]
for all $t \in [-5,0]$ and $x$ such that $\eps x \in \Omega$.  
\end{proposition}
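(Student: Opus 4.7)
The plan is as follows. First, apply Lemma~\ref{thm:oscillation general} directly to $\theta$: the hypotheses hold by assumption, and the lemma yields the one-sided improvement $\theta(t,x) \leq 2 - 2^{-k_0}$ for $t\in[-1,0]$ and $x\in\Omega\cap B_1(\Gamma(t))$. Next, fix $\lambda := c\cdot 2^{-k_0}$ for a small universal constant $c>0$ to be chosen (e.g., $c=1/20$ will work), which in particular forces $\lambda \leq 2^{-k_0-1}$. Set $\beta := 2/(2-\lambda)$, and for $(t,x)\in[-5,0]\times\eps\n\Omega$ write $\tilde\theta(t,x) := \beta[\theta(\eps t, \eps x)+\lambda]$, $D := |\eps x - \Gamma(\eps t)|$, and $d := |x - \eps\n\Gammal(\eps t)|$. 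Since $\gamma(0)=0$ and $|\dot\gamma|\leq\Cgamma$, we have $|\gamma(\eps t)|\leq 5\Cgamma\eps$, and together with the hypothesis $5\Cgamma\leq\eps\n-3$ this gives $|\eps\n D - d|\leq 5\Cgamma$ and hence $D \leq \eps d + 1 - 3\eps$.

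The pointwise verification then splits into three regions based on $D$. In Region A ($D\leq 1$), the rescaled conclusion of Lemma~\ref{thm:oscillation general} applies (since $\eps\leq 1/5$ forces $\eps t\in[-1,0]$) and gives $\theta(\eps t,\eps x)\leq 2-2^{-k_0}$, while the two-sided hypothesis yields $\theta(\eps t,\eps x)\geq -2$ (its penalty vanishing since $D<2$). Combined with $\lambda\leq 2^{-k_0-1}$, these produce $|\tilde\theta|\leq 2$, which is below the desired right-hand side. In Region B ($1<D\leq 2$), oscillation no longer applies but $|\theta|\leq 2$ still holds, so $|\tilde\theta|\leq 2+\frac{4\lambda}{2-\lambda}$; here $d\geq\eps\n D - 5\Cgamma\geq\eps\n-5\Cgamma\geq 3$, so the right-hand side of the desired inequality is at least $2+2^{-k_0}(3^{1/4}-2^{1/4})$, and $c$ is chosen small enough to make $\frac{4\lambda}{2-\lambda}\leq 2^{-k_0}(3^{1/4}-2^{1/4})$.

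The main obstacle is Region C ($D>2$), where the penalty on $\theta$ is active. Here $D>2$ combined with $D\leq\eps d+1-3\eps$ forces $d>\eps\n+3$, and the bound $|\tilde\theta|\leq 2+\frac{4\lambda}{2-\lambda}+\beta\cdot 2^{-k_0}(D^{1/4}-2^{1/4})$ reduces the target to showing
\begin{equation*}
f(d) \;:=\; (d^{1/4}-2^{1/4}) - \beta\bigl((\eps d+1-3\eps)^{1/4}-2^{1/4}\bigr) \;\geq\; \frac{4\lambda}{(2-\lambda)\,2^{-k_0}}
\end{equation*}
for all $d>\eps\n+3$. At the endpoint $d=\eps\n+3$ we have $\eps d+1-3\eps=2$, so $f(\eps\n+3) = (\eps\n+3)^{1/4}-2^{1/4}\geq 8^{1/4}-2^{1/4}$. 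Monotonicity of $f$ on $[\eps\n+3,\infty)$ follows because $f'(d)\geq 0$ reduces to $(\eps d+1-3\eps)^{3/4}\geq\beta\eps d^{3/4}$, which, for $d\geq\eps\n+3$ and $\beta\eps^{1/4}<1$ (valid for $\eps\leq 1/5$ and $c$ small), is straightforward to verify. Choosing $c$ small enough makes the right-hand side of the displayed inequality strictly less than $8^{1/4}-2^{1/4}$, closing Region C. The analogous lower-bound estimate on $\tilde\theta$ is weaker by a missing $\frac{4\lambda}{2-\lambda}$ shift—because $\beta(-2+\lambda)=-2$—so the upper-bound analysis controls $|\tilde\theta|$ in all three regions.
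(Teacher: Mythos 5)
Your argument is correct and follows the same overall strategy as the paper's proof: apply Lemma~\ref{thm:oscillation general} to get $\theta\leq 2-2^{-k_0}$ on $\Omega\cap B_1(\Gamma)$ for short times, rescale, and then verify the barrier inequality pointwise using $5\Cgamma\le\eps^{-1}-3$ to compare distances to $\Gamma$ and $\Gammal$. The one stylistic difference is where the barrier comparison is carried out: the paper delegates it to the standalone Lemma~\ref{thm:technical scaling of barrier}, which asserts the existence of abstract constants $\bar\lambda$, $\alpha$ making $\phi(|y|-\eps^{-1}+3)\geq\bar\lambda+\alpha\phi(\eps|y|)$, and then chooses $\lambda$ to satisfy three algebraic constraints in terms of $\bar\lambda$, $\alpha$; you instead pick $\lambda=c\,2^{-k_0}$ explicitly and do the barrier comparison by hand, splitting into the regions $D\leq 1$, $1<D\leq 2$, and $D>2$. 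Your unpacked version makes the dependence of $\lambda$ on $k_0$ transparent, while the paper's factoring keeps the Harnack section free of elementary but tedious calculus; the underlying estimate (comparing the shifted, $\eps$-scaled barrier to the original via monotonicity from the endpoint $d=\eps^{-1}+3$) is the same. Your final observation that $\beta(-2+\lambda)=-2$, so the lower bound is automatically weaker than the upper, is exactly why the paper's two-sided inequalities \eqref{bar theta bounded above everywhere}--\eqref{bar theta bounded below everywhere} collapse into the single estimate \eqref{bar theta bounded everywhere}.
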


The idea of the proof is to consider a small enough time interval that $\Gamma(t)$ is very close to $\Gammal(t)$.  This is possible because $\Gammal - \gamma$ is uniformly Lipschitz by assumption.  

If, in this proposition, we only wished to show the existence of some $\eps = \eps(k_0,\Cgamma)$ satisfying the proposition's conclusion, then a simpler non-constructive proof would suffice.  However, in Section~\ref{sec:holder} we will apply this proposition with parameters $k_0$ and $\Cgamma$ depending on $\eps$.  To avoid circularity, we must prove the result for all $\eps$ satisfying \eqref{Cgamma and eps for harnack}.

\begin{proof}
Let $\bar{\lambda}>0$ and $\alpha>1$ be the universal constants defined in Lemma~\ref{thm:technical scaling of barrier}.  
Take $\lambda>0$ such that
\begin{equation} \label{lambda smallness assumptions} 2\lambda \leq 2^{-k_0}, \qquad (2+\lambda)(\frac{2}{2-\lambda}) \leq 2 + 2^{-k_0} \bar{\lambda}, \qquad \frac{2}{2-\lambda} \leq \alpha. \end{equation}

Denote 
\[ \bar{\theta}(t,x) := \frac{2}{2-\lambda} \bracket{\theta(\eps t, \eps x) + \lambda} \]
defined for $t \in [-5/\eps,0]$ and 
\[ x \in \Omega_\eps := \{x\in \R^2: \eps x \in \Omega\} \]
and denote
\[ \phi(x) := \paren{|x|^{1/4} - 2^{1/4}}_+. \]
\vskip.3cm

We proved in Lemma \ref{thm:oscillation general} that $\theta(t,x) \leq 2 - 2^{-k_0}$ for $t \in [-1,0]$ and $x \in \Omega\cap B_1(\Gamma)$.  On this same set, $\theta(t,x) \geq -2$ by assumption.  
By the definition of $\bar{\theta}$ and by \eqref{lambda smallness assumptions}, for all $t \in [-1/\eps, 0]$ and $x \in \Omega \cap B_{1/\eps}(\eps\n \Gamma(\eps t))$ we have therefore
\begin{equation}\label{bar theta bounded basin} \begin{cases}
\bar{\theta}(t,x) &\leq \frac{2}{2-\lambda} \bracket{2-2^{-k_0}+\lambda} \leq \frac{2}{2-\lambda} \bracket{2-\lambda} = 2. \\
\bar{\theta}(t,x) &\geq \frac{2}{2-\lambda} \bracket{-2+\lambda} = -2.
\end{cases} \end{equation}

Similarly, the bound \eqref{theta bounded everywhere} on $\theta$ becomes the equivalent bounds on $\bar{\theta}$, for all $(t,x) \in [-5/\eps,0] \times \Omega_\eps$
\begin{equation} \bar{\theta}(t,x) \leq \frac{2}{2-\lambda} \bracket{2 + 2^{-k_0} \phi(|\eps x - \Gamma(\eps t)|) + \lambda} \label{bar theta bounded above everywhere} \end{equation}
and
\begin{equation} \bar{\theta}(t,x) \geq \frac{2}{2-\lambda} \bracket{- 2 - 2^{-k_0} \phi(|\eps x - \Gamma(\eps t)|) + \lambda}. \label{bar theta bounded below everywhere} \end{equation}
\vskip.3cm

Let $t \in [-5,0]$ and $x \in \Omega_\eps$, and define 
\[ y := x - \eps\n \Gamma(\eps t). \]

From \eqref{bar theta bounded above everywhere} and the assumptions \eqref{lambda smallness assumptions}, we can bound
\begin{align*} 
\bar{\theta}(t,x) &\leq \frac{2}{2-\lambda} \bracket{2 + \lambda + 2^{-k_0} \phi(\eps |y|)}
\\ &\leq 2 + 2^{-k_0} \bar{\lambda} + 2^{-k_0} \alpha \phi(\eps |y|)
\\ &= 2 + 2^{-k_0} \bracket{\bar{\lambda} + \alpha \phi(\eps |y|)}.
\end{align*}

From \eqref{bar theta bounded below everywhere} and the assumptions \eqref{lambda smallness assumptions}, we can bound
\begin{align*}
-\bar{\theta}(t,x) &\leq \frac{2}{2-\lambda} \bracket{2 -\lambda + 2^{-k_0} \phi(\eps |y|)}
\\ &\leq 2 + 2^{-k_0} \alpha \phi(\eps |y|)
\\ &\leq 2 + 2^{-k_0} \bracket{\bar{\lambda} + \alpha \phi(\eps |y|)}.
\end{align*}

Therefore
\begin{equation} \label{bar theta bounded everywhere} \abs{\bar{\theta}(t,x)} \leq 2 + 2^{-k_0} \bracket{\bar{\lambda} + \alpha \phi(\eps |y|)}. \end{equation}
\vskip.3cm

If $|y| \leq \eps\n$ then from \eqref{bar theta bounded basin} we have
\[ \abs{\bar{\theta}(t,x)} \leq 2 \leq 2 + 2^{-k_0} \phi(x - \eps\n \Gamma(\eps t) - \eps\n \gamma(\eps t)) \]
which is our desired result.  Therefore assume without loss of generality that $|y|\geq \eps\n$.  In this case we can apply Lemma \ref{thm:technical scaling of barrier} which states that, since $\eps<1/2$ and $\eps|y| \geq 1$, it is a property of $\phi$, $\alpha$, and $\bar{\lambda}$ that
\[ 2 + 2^{-k_0} \bracket{\bar{\lambda} + \alpha \phi(\eps |y|)} \leq 2 + 2^{-k_0} \bracket{\phi(|y|-\eps\n + 3)}. \]

For $t \in [-5,0]$, we have by assumption \eqref{Cgamma and eps for harnack}
\[ |y|-\eps\n+3 \leq |y| - 5 \Cgamma \leq |y - \eps\n\gamma(\eps t)|. \]

The estimate \eqref{bar theta bounded everywhere} becomes
\[ \abs{\bar{\theta}(t,x)} \leq 2 + 2^{-k_0} \phi(|x - \eps\n\Gamma(\eps t) - \eps\n\gamma(\eps t)|). \]

This concludes the proof.  
\end{proof}

\vskip1cm
\section{H\"{o}lder Continuity} \label{sec:holder}

In this section we shall prove the main theorem, Theorem~\ref{thm:main continuity}.  We will demonstrate H\"{o}lder continuity by iteratively applying Proposition~\ref{thm:oscillation shifted} and rescaling.  

We begin with a lemma to describe the scaling properties of \eqref{eq:main linear}.  

\begin{lemma}[Scaling] \label{thm:scaling}
Let $\Omega \subseteq \R^2$ be a bounded open set with $C^{2,\beta}$ boundary for some $\beta \in (0,1)$, and let $j_0 \in \Z$ and $\eps > 0$ be constants.  

Suppose that $\theta:[-T,0] \times \Omega \to \R$ and $u:[-T,0]\times \Omega \to \R^2$ are a suitable solution to \eqref{eq:main linear} and $u$ is calibrated by a sequence $(u_j)_{j \geq j_0}$ with constant $\Ccalib$ and center $N$.  

Suppose that on $\Omega$ the functions $K_{1/4}$ and $K_1$ (defined in \eqref{Caff Stinga representation}) satisfy the relation
\[ K_{1/4}(x,y) \leq \Comega |x-y|^{3/4} K_1(x,y) \qquad \forall x\neq y \in \Omega. \]
\vskip.3cm

Then
\[ \bar{\theta}(t,x) := \theta(\eps t, \eps x) \]
and
\[ \bar{u}(t,x) := \sum_{j=j_0}^\infty \bar{u}_j(t, x), \qquad \bar{u}_j(t,x) := u_j(\eps t, \eps x) \]
are also a suitable solution to \eqref{eq:main linear} on $[-T/\eps, 0]\times \Omega_\eps$ where $\Omega_\eps = \{x \in \R^2: \eps x \in \Omega\}$.  

Moreover, $(\bar{u}_j)_{j \geq j_0}$ is calibrated with the same constant $\Ccalib$ but with center $N - \log_2(\eps)$, and the relation
\begin{equation} \label{scaling 1/4 to 1 property} \bar{K}_{1/4}(x,y) \leq \Comega |x-y|^{3/4} \bar{K}_1(x,y) \qquad \forall x\neq y \in \Omega_\eps \end{equation}
holds.  

\end{lemma}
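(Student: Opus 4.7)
The plan is to verify each claim separately by a direct computation using the scaling behavior of the Dirichlet spectral data. The key scaling fact I would establish first is that if $(\eigen{k}, \lambda_k)$ are the $L^2$-normalized eigenpairs on $\Omega$, then $(y \mapsto \eps\, \eigen{k}(\eps y), \eps^2 \lambda_k)$ are the $L^2$-normalized eigenpairs on $\Omega_\eps$. Expanding $\bar\theta(t,y) = \theta(\eps t, \eps y)$ in the $\Omega$-basis and re-expressing in the $\Omega_\eps$-basis then yields the scaling identity $\Lambda^s_{\Omega_\eps} \bar f(y) = \eps^s (\Lambda^s_\Omega f)(\eps y)$ for all $s \in \R$.

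Using this identity with $s=1$, together with the chain rule for $\del_t$ and $\grad$, the PDE \eqref{eq:main linear} for $(\theta,u)$ at the point $(\eps t, \eps y)$, after multiplication by $\eps$, translates exactly into the PDE for $(\bar\theta, \bar u)$ on $[-T/\eps, 0]\times \Omega_\eps$, and the same chain rule yields $\div \bar u = 0$. For the suitability of $(\bar\theta, \bar u)$, I would invoke the scale-invariant formulation built into the definition: any further $\tilde\mu$-scaling of $(\bar\theta, \bar u)$ is a $(\eps\tilde\mu)$-scaling of $(\theta, u)$, so the localized energy inequalities for $(\bar\theta, \bar u)$ reduce to the same inequalities for $(\theta, u)$ with the universal constant $\Csuit$.

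For the calibration, I would check the three bounds of Definition~\ref{def:calibrated} term by term: the $L^\infty$ bound is invariant under pure rescaling, the gradient picks up an $\eps$ factor from the chain rule which combines with $2^{-N}$ to give $2^{-(N-\log_2 \eps)}$, and the identity $\Lambda^{-1/4}_{\Omega_\eps}\bar u_j(y) = \eps^{-1/4} (\Lambda^{-1/4}_\Omega u_j)(\eps y)$ turns $2^{N/4}$ into $2^{(N-\log_2\eps)/4}$. This is exactly the claimed shift of center to $N - \log_2(\eps)$, with the constant $\Ccalib$ unchanged.

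The step I expect to require the most care is \eqref{scaling 1/4 to 1 property}. I would start from the Caffarelli-Stinga representation \eqref{Caff Stinga representation} written for $(\bar f,\bar g) = (f(\eps\cdot), g(\eps\cdot))$, use the scaling identity for $\Lambda^s$ to relate the left-hand side on $\Omega_\eps$ to that on $\Omega$, and then push the change of variables $x = \eps x', y = \eps y'$ into the integral representation on $\Omega$. Matching coefficients in the bilinear forms identifies the scaling laws
\[ K_{2s}^{\Omega_\eps}(x,y) = \eps^{2+2s} K_{2s}^\Omega(\eps x, \eps y), \qquad B_{2s}^{\Omega_\eps}(x) = \eps^{2s} B_{2s}^\Omega(\eps x). \]
Applied with $2s = 1/4$ and $2s = 1$, the relation $K^\Omega_{1/4}(\eps x, \eps y) \leq \Comega |\eps x - \eps y|^{3/4} K_1^\Omega(\eps x, \eps y)$ transforms, after multiplying by $\eps^{9/4}$ and using $|\eps x - \eps y|^{3/4} = \eps^{3/4} |x-y|^{3/4}$, into $\bar K_{1/4}(x,y) \leq \Comega |x-y|^{3/4} \bar K_1(x,y)$, with the same constant $\Comega$ as claimed.
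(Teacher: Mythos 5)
Your proposal is correct and follows essentially the same path as the paper: establish the scaling of the eigenpairs and hence $\Lambda^s$, deduce the scaling of the Caffarelli--Stinga kernels, observe that the PDE and the kernel relation transform covariantly, use the scale-invariant formulation of suitability, and translate the three calibration bounds term by term. The only notable difference is one of detail rather than method: where the paper cites \cite{CaSt} Section 2.4 for the scaling identities $\Lambda\theta(\eps t,\eps x)=\eps\bar\Lambda\bar\theta(t,x)$ and $\bar K_s(x,y)=\eps^{s-2}K_s(\eps x,\eps y)$, you derive everything from the eigenbasis. Worth remarking: your formula $\bar K_\sigma(x,y)=\eps^{\sigma+2}K_\sigma(\eps x,\eps y)$ (with $\sigma$ denoting the subscript in the notation of Proposition~\ref{thm:Caff Stinga representation}, so that $K_{2s}$ is the bilinear-form kernel of $\Lambda^s$) is the one consistent with the identity $\iint|\bar f(x)-\bar f(y)|^2\bar K_{2s}+\int\bar f^2\bar B_{2s}=\eps^{2s-2}\|f\|_{\HD^s}^2$, and it does not match the exponent $\eps^{s-2}$ printed in the paper's proof. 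The discrepancy is immaterial for the conclusion because the relation $K_{1/4}\leq\Comega|x-y|^{3/4}K_1$ is invariant under any rescaling of the form $K_\sigma\mapsto\eps^{a+\sigma}K_\sigma(\eps\cdot)$ (the $\eps$-powers cancel identically), but your version is the correct one if the subscript convention of Proposition~\ref{thm:Caff Stinga representation} is maintained.
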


\begin{proof}
Denote by $\bar{\Lambda}$ the square root of the Laplacian with Dirichlet boundary conditions on $\Omega_\eps$.  One can calculate (see e.g. \cite{CaSt} Section 2.4) that for $(t,x) \in [-T/\eps, 0]\times \Omega_\eps$
\[ \Lambda \theta(\eps t,\eps x) = \eps \bar{\Lambda} \bar{\theta}(t,x). \]

Similarly, in the Caffarelli-Stinga representation from Proposition~\ref{thm:Caff Stinga representation} the operator $\bar{\Lambda}^s$ will have kernel
\[ \bar{K}_s(x,y) = \eps^{s-2} K_s(\eps x,\eps y). \]

From these facts it is clear that the scaled functions satisfy \eqref{eq:main linear} and \eqref{scaling 1/4 to 1 property}.  
\vskip.3cm

To show that $(\bar{u}_j)_{j \in \Z}$ is calibrated, we must translate the three bounds on $u_j$ to corresponding bounds on $\bar{u}_j$.  Each of the calculations are similar, so we show only one:
\[ \norm{\grad \bar{u}_j}_\infty = \eps \norm{\grad u_j}_\infty \leq 2^{\log_2(\eps)} 2^j 2^{-N} \Ccalib = 2^j 2^{-(N-\log_2(\eps))} \Ccalib. \]

\end{proof}
\vskip.3cm

The next lemma demonstrates H\"{o}lder continuity of suitable solutions.  The proof method is to consruct a sequence of rescaled functions all of which, by induction, satisfy the assumptions of Proposition~\ref{thm:oscillation shifted}.  We will assume that the velocity $u$ is the Riesz transform of an $L^\infty$ function $\Theta$, which will in practice typically be $\theta$ itself, up to scaling and translation.  

\begin{lemma}[Continuity of suitable solutions] \label{thm:continuity for suitable}
There exists a universal constant $C$ such that the following holds:

Let $\Omega \subseteq \R^2$ be an open, bounded domain with $C^{2,\beta}$ boundary, $\beta \in (0,1)$.  Let $\Theta \in L^\infty([-5,0]\times\Omega)$.  Then there exists a constant $\alpha \in (0,1)$ depending on $\Omega$ and $\norm{\Theta}_{L^\infty}$ such that the following holds:
\vskip.3cm

Let $\theta:[-5,0] \times \Omega \to \R$ and $u:[-5,0]\times\Omega \to \R^2$ be a suitable solution to \eqref{eq:main linear}.  Assume that $\norm{\theta}_{L^\infty([-5,0]\times\Omega)} \leq 2$ and that $u = \grad^\perp \Lambda^{-1} \Theta$.  

Then for any point $P \in \bar{\Omega}$, $\theta$ is H\"{o}lder continuous at $(0,P)$ and
\[ \sup_{(t,x) \in [-5,0]\times\Omega} \frac{|\theta(t,x) - \theta(0,P)|}{(|t|^2 + |x-P|^2)^{\alpha/2}} \leq C. \]
\end{lemma}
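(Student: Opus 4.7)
The plan is to iterate the shifted oscillation estimate (Proposition~\ref{thm:oscillation shifted}) together with the scaling invariance of Lemma~\ref{thm:scaling}, producing a geometric decay of oscillation at $(0, P)$ which translates to H\"older continuity. Since $\Theta \in L^\infty$, Proposition~\ref{thm:u is calibrated} gives a calibrated decomposition of $u = \grad^\perp \Lambda^{-1} \Theta$ with constant $\Ccalib = \Ccalib(\Omega, \norm{\Theta}_{L^\infty})$ and center $N_0 = 0$; Lemma~\ref{thm:calibration is good} then splits $u = \ulow + \uhigh$ with the required Lipschitz and $\Lambda^{-1/4}$-in-$L^\infty$ bounds. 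Fix a constant $\Comega$ controlling $K_{1/4}(x, y) \leq \Comega |x-y|^{3/4} K_1(x, y)$ on $\Omega$. Both $\Ccalib$ and $\Comega$ are preserved under the rescaling $x \mapsto \eps x$ by Lemma~\ref{thm:scaling}, so they serve as universal-in-scale constants throughout the iteration.

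Construct inductively a sequence $(\theta_k, u^{(k)}, \Gamma_k)$ on rescaled domains $\Omega_k := \eps^{-k}\Omega$ satisfying the hypotheses of Proposition~\ref{thm:oscillation shifted}. Take $\theta_0 := \theta$ and $\Gamma_0$ the integral curve of $\ulow$ passing through $P$ at $t = 0$ (with $\ulow$ extended continuously to $\bar\Omega$). Given data at scale $k$, choose $\sigma_k \in \{\pm 1\}$ so that $|\{\sigma_k \theta_k \leq 0\} \cap [-4, 0] \times B_4(\eps\n\Gamma_{k-1}(\eps \cdot))| \geq 2|B_4|$; one of the two signs works by pigeonhole, since $|\theta_k| \leq 2$ and the cylinder has measure $4|B_4|$. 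Apply Proposition~\ref{thm:oscillation shifted} to $\sigma_k \theta_k$ with reference path $\eps\n\Gamma_{k-1}(\eps \cdot)$ and current path $\Gamma_k$, and define
\[ \theta_{k+1}(t, x) := \sigma_k \tfrac{2}{2-\lambda}\bigl[ \sigma_k \theta_k(\eps t, \eps x) + \lambda \bigr], \]
with $\Gamma_{k+1}$ the integral curve of the new low-pass velocity $\ulow^{(k+1)}$ through $\eps\n\Gamma_k(0)$. Lemma~\ref{thm:scaling} propagates the calibration and kernel bounds with the same $\Ccalib$, $\Comega$.

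The main obstacle is showing that the Lipschitz constant $\Cgamma$ of $\gamma_{k+1} := \Gamma_{k+1} - \eps\n\Gamma_k(\eps \cdot)$ can be chosen uniformly in $k$. Lemma~\ref{thm:scaling} shifts the calibration center by $\log_2(\eps\n)$ per step, so $\ulow^{(k+1)}$ differs from the rescaling of $\ulow^{(k)}$ by the sum of roughly $\log_2(\eps\n)$ calibration terms $u_j$, each bounded in $L^\infty$ by $\Ccalib$. Combined with $\norm{\grad \ulow^{(k+1)}}_{L^\infty} \leq 2\Ccalib$ on $B_1(\Gamma_{k+1})$ and a Gronwall estimate on the ODE for $\gamma_{k+1}$, this yields $\norm{\dot\gamma_{k+1}}_{L^\infty} \leq \Cgamma$ for a constant $\Cgamma = \Cgamma(\Ccalib, \eps)$ independent of $k$. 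With $\Cgamma$ fixed, let $k_0$ and $\lambda$ be the constants supplied by Lemma~\ref{thm:oscillation general} and Proposition~\ref{thm:oscillation shifted} for the parameters $(\Ccalib, \Comega, \Cgamma)$, and choose $\eps \in (0, 1/5]$ with $5\Cgamma \leq \eps\n - 3$; this is achievable because $\Cgamma$ depends on $\eps$ only through $\log_2(\eps\n)$ and a Gronwall factor (hence grows logarithmically), while $\eps\n$ grows linearly. The choices of $\eps, k_0, \lambda, \Cgamma$ are mutually consistent and the induction closes.

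Unraveling the $k$-fold scaling, the uniform bound $\norm{\theta_k}_{L^\infty([-1, 0] \times B_1(\Gamma_k))} \leq 2$ together with the $\lambda$-shift translates to oscillation of $\theta$ controlled by $4(1 - \lambda/2)^k$ on parabolic cylinders of scale $\eps^k$ around the original Lipschitz path $\Gamma_0$. Since $\Gamma_0(0) = P$, standard comparison yields H\"older continuity at $(0, P)$ with exponent $\alpha = \log(2/(2-\lambda))/\log(\eps\n)$, depending on $\Omega$ and $\norm{\Theta}_{L^\infty}$ only through $\Ccalib, \Comega, \eps$. The constant $C$ is universal, absorbing the initial oscillation bound $\norm{\theta}_\infty \leq 2$ and the convergent geometric sum across scales. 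The case $P \in \del\Omega$ requires no essential modification, since the Lagrangian framework of Section~\ref{sec:harnack} permits $\Gamma$ to take values in $\R^2$, the low-pass velocities extend continuously to $\bar\Omega$, and all spatial integrals are automatically restricted to $\Omega$.
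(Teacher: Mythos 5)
Your proposal follows essentially the same route as the paper: decompose $u$ into a calibrated sequence via Proposition~\ref{thm:u is calibrated}, fix a scale parameter $\eps$ consistent with a uniform-in-$k$ Lipschitz bound for the path increments $\gamma_k$ obtained by Gronwall, and iterate Proposition~\ref{thm:oscillation shifted} on dyadically rescaled solutions $\theta_k$ following the nested Lagrangian paths. The minor notational difference (your $\Gamma_k$ plays the role of the paper's $\Gammal^k$, with the paper's "previous path" $\Gamma_k$ appearing as your $\eps^{-1}\Gamma_{k-1}(\eps\,\cdot)$) doesn't change the argument, and your observation that $\eps$ can always be chosen since $\Cgamma$ only grows logarithmically in $\eps^{-1}$ correctly closes the circularity the paper handles via condition~\eqref{eps is small enough for Cgamma}.
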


\begin{proof}
By relabelling our coordinate system, we can assume without loss of generality that $P = 0$ is the origin in $\R^2$.  

From Proposition~\ref{thm:u is calibrated}, we know that 
\[ u = \grad^\perp \Lambda^{-1} \Theta = \sum_{j=j_0}^\infty u_j \]
for a sequence $(u_j)_{j\geq j_0}$ of divergence-free functions calibrated with some constant $\Ccalib = \Ccalib(\Omega, \norm{\Theta}_{L^\infty})$ and center 0.  Assume without loss of generality that $j_0 < 0$.

Choose a constant $0 < \eps < 1/5$ such that
\begin{equation}\label{eps is small enough for Cgamma} 
5 \max\paren{ - \Ccalib \log_2(\eps) e^{10\eps\Ccalib}, (1-j_0) \Ccalib} \leq \eps\n - 3.
\end{equation}

For integers $k \geq 0$ consider the domains
\[ \Omega_k := \{x \in \R^2: \eps^k x \in \Omega\}. \]
If $K_s^k$ are the kernels defined in Proposition~\ref{thm:Caff Stinga representation} corresponding to the operators $\Lambda^s$ on $\Omega_k$, then by Proposition~\ref{thm:Caff Stinga representation} and Lemma~\ref{thm:scaling} the relation
\[ K^k_{1/4}(x,y) \leq \Comega |x-y|^{3/4} K^k_1(x,y) \qquad \forall x \neq y \in \Omega_k \]
holds for some constant $\Comega$ independent of $k$.  

For notational convenience, denote
\[ \sum_k = \sum_{j > - k \log_2(\eps)}, \qquad \sum^k = \sum_{j \leq -k \log_2(\eps)} \]
and define the following functions on $[-5,0]\times \Omega_k$:
\begin{align*} 
\ulowth{k}(t,x) &:= \sum^k u_j(\eps^k t, \eps^k x), \\
\uhighth{k}(t,x) &:= \sum_k u_j(\eps^k t, \eps^k x).  
\end{align*}
By Lemmas \ref{thm:scaling} we know the sequence $(u_j(\eps^k \cdot, \eps^k \cdot))_j$ is calibrated with constant $\Ccalib$ and center $-k\log_2(\eps)$, and hence by \ref{thm:calibration is good} we know that, independently of $k$,
\[ \norm{\Lambda^{-1/4} \uhighth{k}}_{L^\infty([-5,0]\times \Omega_k)} \leq 6 \Ccalib \]
and
\[ \norm{\grad \ulowth{k}}_{L^\infty([-5,0]\times \Omega_k)} \leq 2 \Ccalib. \]
Each $\ulowth{k}$ is a finite sum of $L^\infty$ functions, hence $L^\infty$ itself, though not uniformly in $k$.  
\vskip.3cm

Define $\Gamma_k, \gamma_k: [-5,0] \to \R^2$ by the following recursive formulae and ODEs:
\begin{align*}
\Gamma_0(t) &:= 0, & t \in [-5,0], \\
\gamma_k(0) &:= 0, & k \geq 0, \\
\dot{\gamma}_k(t) &:= \ulowth{k}(t, \Gamma_k(t) + \gamma_k(t)) - \dot{\Gamma}_k(t), & k \geq 0, t \in [-5,0) \\
\Gamma_k(t) &:= \eps\n \gamma_{k-1}(\eps t) + \eps^{-2} \gamma_{k-2}(\eps^2 t) + \cdots + \eps^{-k} \gamma_0(\eps^k t), & k \geq 1, t \in [-5,0].
\end{align*}
Since each $\ulowth{k}$ is $L^\infty$ in space-time and Lipschitz in space, these $\gamma_k$ exist by a version of the Cauchy-Lipschitz theorem.  For example, Theorem 3.7 of Bahouri, Chemin, and Danchin \cite{BaChDa} proves existence and uniqueness in our case.  In particular, since $\ulowth{k}$ is a vector field which is tangential to the boundary of $\Omega_k$ and has unique flows, the Lagrangian path
\[ \Gammal^k(t) := \Gamma_k(t) + \gamma(k) \]
for $\ulowth{k}$ must remain inside $\bar{\Omega}_k$ for all time and so our expressions remain well-defined.  

The quantity $\gamma_k$ here corresponds to the frequency packets $u_j$ with $-(k-1) \log_2(\eps) < j \leq -k\log_2(\eps)$.  These frequencies are included in the definition of $\ulowth{k}$ but not the definition of $\ulowth{k-1}$ (they would instead be included in $\uhighth{k-1}$).  
\vskip.3cm

By construction, for $k \geq 0$ we have $\Gamma_{k+1}(t) = \eps\n \gamma_k(\eps t) + \eps\n \Gamma_k(\eps t)$.  Therefore
\begin{align*} 
\dot{\Gamma}_{k+1}(t) &= \del_t \bracket{\eps\n \gamma_k(\eps t) + \eps\n \Gamma_k(\eps t)}
\\ &= \dot{\gamma}_k(\eps t) + \dot{\Gamma}_k(\eps t)
\\ &= \ulowth{k}(\eps t, \gamma_k(\eps t) + \Gamma_k(\eps t))
\\ &= \ulowth{k}(\eps t, \eps \Gamma_{k+1}(t)).  
\end{align*}

With this in hand, we can bound the size of $\gamma_k$.  Namely, for $k \geq 1$, 
\begin{align*}
\dot{\gamma}_k(t) &= \ulowth{k}(t, \Gamma_k(t) + \gamma_k(t)) - \dot{\Gamma}_k(t)
\\ &= \ulowth{k}(t, \Gamma_k(t) + \gamma_k(t)) - \ulowth{k-1}(\eps t, \eps \Gamma_k(t))
\\ &= \sum^k u_j(\eps^k t, \eps^k \Gamma_k(t) + \eps^k \gamma_k(t)) - \sum^{k-1} u_j(\eps^k t, \eps^k \Gamma_k(t))
\\ &= \sum^{k-1} \bracket{u_j(\eps^k t, \eps^k \Gamma_k(t)+\eps^k \gamma_k(t)) - u_j(\eps^k t, \eps^k \Gamma_k(t))} + \sum_{k-1}^k u_j(\eps^k t, \eps^k \ldots)
\\ &= \bracket{\ulowth{k-1}\big(\eps t, \eps \Gamma_k(t)+\eps \gamma_k(t)\big) - \ulowth{k-1}(\eps t, \eps \Gamma_k(t))} + \sum_{k-1}^k u_j(\eps^k t, \eps^k \ldots).
\end{align*}
The function $x \mapsto \ulowth{k-1}(\eps t, x)$ is Lipschitz, with Lipschitz constant less than $2 \Ccalib$.  Moreover, each $u_j$ has $\norm{u_j}_\infty \leq \Ccalib$.  Thus from the above calculation we can bound
\begin{equation} \label{ODE for dot gamma} |\dot{\gamma}_k(t)| \leq 2 \Ccalib \eps |\gamma_k(t)| - \Ccalib \log_2(\eps). \end{equation}
Applying Gronwall's inequality, we find that for $t \in [-5,0]$
\[ |\gamma_k(t)| \leq \frac{-\log_2(\eps)}{2 \eps} \paren{ e^{10 \eps \Ccalib} - 1}. \]
Plugging this estimate back into \eqref{ODE for dot gamma},
\[ |\dot{\gamma}_k(t)| \leq -\Ccalib \log_2(\eps) e^{10\eps \Ccalib} \qquad \forall k \geq 1. \]

Trivially $|\dot{\gamma}_0| \leq (1-j_0) \Ccalib$, so if we define
\[ \Cgamma = \max\paren{ - \Ccalib \log_2(\eps) e^{10\eps\Ccalib}, (1-j_0) \Ccalib} \]
then for all $k \geq 0$ and $t \in [-5,0]$
\[ |\dot{\gamma}_k(t)| \leq \Cgamma. \]
Moreover, the assumption \eqref{Cgamma and eps for harnack} then follows from \eqref{eps is small enough for Cgamma}.  
\vskip.3cm

Define
\[ \theta_0(t,x) := \theta(t,x) \]
and for each $k \geq 0$, if $|\{\theta_k \leq 0\} \cap [-4,0]\times B_4(\Gamma_k(t))| \geq 2|B_4|$ then set
\[ \theta_{k+1}(t,x) := \frac{2}{2-\lambda} \bracket{\theta_k(\eps t, \eps x) + \lambda}. \]
Otherwise, set
\[ \theta_{k+1}(t,x) := \frac{1}{1-\lambda} \bracket{\theta_k(\eps t, \eps x) - \lambda}. \]

From Lemma \ref{thm:scaling}, we know that $\theta_k$ and the calibrated function $\sum_{j \geq j_0} u_j(\eps^k \cdot, \eps^k \cdot)$ solve \eqref{eq:main linear}.  By construction, $\theta_k$, $\ulowth{k}$, $\uhighth{k}$, and $\Gammal^k$ solve \eqref{eq:main linear brokendown}. 
\vskip.3cm

Since $|\theta_0|\leq 2$ by assumption, we know in particular that 
\begin{equation}\label{thetak below the barrier}
|\theta_k| \leq 2 + 2^{-k_0} \paren{|x-\Gamma_k(t)|^{1/4} - 2^{1/4}}_+
\end{equation}
holds for $k=0$.  

If \eqref{thetak below the barrier} holds for $k$, then at least one of $\theta_k$ or $-\theta_k$ (depending on whether $|\{\theta_k \leq 0\} \cap [-4,0]\times B_4(\Gamma_k(t))|$ is more or less than $2|B_4|$) will satisfy the assumptions of Proposition~\ref{thm:oscillation shifted}.  In either case, we conclude that $\theta_{k+1}$ satisfies \eqref{thetak below the barrier}.  By induction, this bound holds for all $\theta_k$.  
\vskip.3cm

Each $\theta_k$ is between $-2$ and 2 on $[-5,0]\times B_2(\Gamma_k)$.  But recall that each $\Gamma_k$ is Lipschitz with constant $k \Cgamma$.  Thus $|\Gamma_k(t)|\leq 1$ for $t \in [-(k \Cgamma)\n, 0]$.  On that time interval, 
\[ \abs{\theta_k(t,x)} \leq 2 \qquad \forall x \in B_1(0). \]

We conclude that
\[ \abs{ \sup_{[-\eps^k (k \Cgamma)\n, 0] \times B_{\eps^k}(0)} \theta(t,x) - \inf_{[-\eps^k (k \Cgamma)\n, 0] \times B_{\eps^k}(0)} \theta(t,x) } \leq 4 \paren{\frac{2}{2-\lambda}}^{-k}. \]

In particular, for some positive constant $C$ such that
\[ \eps^{C k} \leq (k \Cgamma)\n \qquad \forall k \geq 0, \]
we can say that
\[ |t|^2 + |x|^2 \leq \eps^{(1+C)k} \]	
implies that $(t,x) \in [-\eps^k (k \Cgamma)\n, 0] \times B_{\eps^k}(0)$ which in turn implies that
\[ \abs{\theta(t,x) - \theta(0,0)} \leq  4 \paren{\frac{2}{2-\lambda}}^{-k}. \]

In other words,
\begin{align*} 
\abs{\theta(t,x) - \theta(0,0)} &\leq 4 \paren{\frac{2}{2-\lambda}}^{ -\frac{1}{1+C} \log_\eps(|t|^2 - |x|^2)  + 1} 
\\ &= 4 \paren{\frac{2}{2-\lambda}} \exp\bracket{\ln\paren{\frac{2}{2-\lambda}} \frac{\ln(|t|^2 + |x|^2)}{-(1+C)\ln(\eps)}}
\\ &= \frac{8}{2-\lambda} (|t|^2 + |x|^2)^{-\frac{\ln(2) - \ln(2-\lambda)}{(1+C)\ln(\eps)}}
\\ &\leq 8 (|t|^2 + |x|^2)^{-\frac{\ln(2) - \ln(2-\lambda)}{(1+C)\ln(\eps)}}
\end{align*}

\end{proof}

We are now able to prove the main result, Theorem~\ref{thm:main continuity}.  

\begin{proof}[Proof of Theorem \ref{thm:main continuity}]
Recall that $\Omega$, $S$, $k$, and $\theta_0$ are given.  

In Proposition~\ref{thm:existence} we construct global-in-time solutions to \eqref{eq:main nonlinear}.  By construction, there is a universal constant $C_1$ so $\norm{\theta(t,\cdot)}_{L^\infty(\Omega)} \leq C_1 t\n \norm{\theta_0}_{L^2(\Omega)}$.  

Consider a point $(t_0, x_0)$ with $t_0 > S$.  Consider arbitrary constants $\lambda, \mu \in (0,1]$ and note that
\[ \tilde{\theta}(t,x) := \lambda \theta(t_0 + \mu t, \mu x), \qquad \tilde{u}(t,x) := u(t_0 + \mu t, \mu x) \]
is a suitable solution to \eqref{eq:main linear} on $[-t_0/\mu,\infty) \times \tilde{\Omega}$ where $\tilde{\Omega} := \{x\in \R^2: \mu x \in \Omega \}$.  

If $S + \mu (-5) = \frac{S}{2}$, or equivalently if $\mu = S/10$, then then we have 
\[ \norm{\tilde{\theta}}_{L^\infty([-5,0]\times\tilde{\Omega})} \leq \lambda 2 C_1 \frac{k}{S}. \]
Take $\lambda = S / (C_1 k)$.

On $[-5,0]\times\tilde{\Omega}$ we have $\tilde{\theta}$ and $\tilde{u}$ a suitable solution to \eqref{eq:main linear} satisfying $\norm{\tilde{\theta}}_{L^\infty} \leq 2$ and $\tilde{u} = \grad^\perp \Lambda^{-1} \Theta$ with $\norm{\Theta}_{L^\infty} \leq 2 C_1 k/S$.  Therefore we can apply Lemma~\ref{thm:continuity for suitable} to $\tilde{\theta}$, $\tilde{u}$ and find that $\tilde{\theta}$ satisfies, for $\alpha = \alpha(k,S)$ and $C$ universal,
\[ \sup_{(t,x) \in [-5,0]\times\tilde{\Omega}} \frac{|\tilde{\theta}(t,x) - \tilde{\theta}(t_0,x_0)|}{\paren{|t-t_0|^2 + |x-x_0|^2}^{\alpha/2}} \leq C. \]

For the original unscaled $\theta$, we have
\[ \sup_{(t,x) \in [S/2,t_0]\times\Omega} \frac{|\theta(t,x) - \theta(t_0,x_0)|}{\paren{|t-t_0|^2 + |x-x_0|^2}^{\alpha/2}} \leq C \lambda\n \mu^{-\alpha} \leq C (\lambda\mu)\n = C \frac{10 C_1}{S^2} k. \]

\end{proof}


\vskip1cm
\appendix
\section{Technical Lemmas} \label{sec:technical}

In this appendix we state and prove a few technical lemmas.  

\begin{lemma}[De Giorgi Iteration Argument] \label{thm:DG1 skeleton}
For any constant $\bar{C} \geq 0$, there exists a $\delta>0$ such that the following holds:

Let $\Omega \subseteq \R^2$ be a bounded open set with $C^{2,\beta}$ boundary for some $\beta \in (0,1)$.  
Let $f \in L^2([-2,0]\times\Omega)$ be a function with the property that for any positive constant $a$
\begin{equation} \label{DG energy ddt} \ddt \int (f-a)_+^2 + \int \abs{\Lambda^{1/2} (f-a)_+}^2 \leq \bar{C} \paren{ \int \indic{f \geq a} + \int (f-a)_+ + \int (f-a)_+^2 }. \end{equation}

Then
\[ \int_{-2}^0 \int (f-0)_+^2 \,dxdt \leq \delta \]
implies that
\[ f(t,x) \leq 1 \qquad \forall t\in[-1,0], x \in \Omega. \]
\end{lemma}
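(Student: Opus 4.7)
The plan is to run a standard De Giorgi iteration. Define level and time sequences
\[ L_k := 1 - 2^{-k}, \qquad T_k := -1 - 2^{-k}, \qquad k \geq 0, \]
and set $f_k := (f - L_k)_+$. Consider the energy
\[ U_k := \sup_{t \in [T_k,0]} \int f_k(t,\cdot)^2 \,dx + \int_{T_k}^0 \int |\Lambda^{1/2} f_k|^2 \,dxdt. \]
The whole argument reduces to showing a nonlinear recurrence of the form $U_{k+1} \leq C_0^k U_k^{1+\beta}$ for some universal $\beta > 0$ (independent of $\Omega$), after which a standard lemma gives $U_k \to 0$ once $U_0 = \int_{-2}^0 \int f_0^2 \leq \delta$ is small enough, forcing $f \leq 1$ on $[-1,0]\times\Omega$.

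First I would use an averaging argument: since $\int_{T_k}^{T_{k+1}} \int f_{k+1}^2 \leq \int_{T_k}^0\int f_k^2 \leq U_k$, there exists $s_k \in [T_k,T_{k+1}]$ with $\int f_{k+1}(s_k,\cdot)^2 \leq 2^{k+1} U_k$. Integrating the energy inequality \eqref{DG energy ddt} with $a = L_{k+1}$ from $s_k$ to any $t \in [s_k, 0]$ produces
\[ \int f_{k+1}(t,\cdot)^2 + \int_{s_k}^t \int |\Lambda^{1/2} f_{k+1}|^2 \leq 2^{k+1} U_k + \bar{C} \int_{T_k}^0 \int \left( \indic{f \geq L_{k+1}} + f_{k+1} + f_{k+1}^2 \right). \]
Each of the three right-hand terms is then bounded by (a power of) $U_k$ times a factor growing polynomially in $2^k$, using Chebyshev's inequality in the form $\indic{f \geq L_{k+1}} \leq (L_{k+1}-L_k)^{-2} f_k^2 = 2^{2(k+1)} f_k^2$, and analogously $f_{k+1} \leq f_k \leq 2^{k+1} f_k^2$. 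Taking the supremum over $t \in [T_{k+1},0]$ yields $U_{k+1} \lesssim 2^{Ck} U_k$ --- but this is only linear and is not enough.

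The gain that makes the recurrence superlinear comes from the observation that $f_{k+1}$ is supported where $f_k \geq 2^{-k-1}$, so one can extract a small factor $|\{f_k \geq 2^{-k-1}\}|^\beta$ via H\"older. To estimate that measure we need a space-time integrability for $f_k$ strictly better than $L^2(L^2)$. This is precisely where Proposition~\ref{thm:hadamard 3 lines} enters: it gives
\[ \norm{E f_k}_{L^2([T_k,0];H^{1/2}(\R^2))} \leq \norm{f_k}_{L^2([T_k,0];\HD^{1/2})} \leq U_k^{1/2} \]
with no constant depending on $\Omega$, so that the standard Sobolev embedding $H^{1/2}(\R^2) \hookrightarrow L^4(\R^2)$ combined with interpolation against the $L^\infty(L^2)$ bound gives a universal estimate
\[ \norm{f_k}_{L^3([T_k,0]\times\Omega)}^3 \leq C U_k^{3/2}. \]
Chebyshev then produces $|\{f_k \geq 2^{-k-1}\}| \leq C 2^{3(k+1)} U_k^{3/2}$, and feeding this back into the three bad terms on the right gives a true superlinear recurrence $U_{k+1} \leq C^k U_k^{1+\beta}$ with $\beta = 1/2$ (or similar).

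The main technical obstacle is ensuring every constant along the way is independent of $\Omega$, since the hypothesis on $\delta$ forbids any $\Omega$-dependence; this is the whole reason for invoking Proposition~\ref{thm:hadamard 3 lines} rather than an intrinsic Sobolev embedding on $\Omega$. Once the superlinear recurrence is in hand, choosing $\delta = \delta(\bar C) > 0$ so that $U_0 \leq \delta$ triggers $U_k \to 0$ is a one-line computation, and $U_\infty = 0$ means $(f-1)_+ \equiv 0$ on $[-1,0]\times\Omega$, which is the claim.
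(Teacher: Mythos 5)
Your proposal follows the paper's proof almost exactly: same level and time sequences, same Chebyshev-type observation that $\indic{f_{k+1}>0}\lesssim 2^{pk}f_k^p$, same invocation of Proposition~\ref{thm:hadamard 3 lines} to obtain an $\Omega$-independent Sobolev embedding, same $L^2(L^4)\cap L^\infty(L^2)\hookrightarrow L^3(L^3)$ interpolation, and the same closing nonlinear-recursion lemma; iterating on the full energy $U_k$ (one-step recursion) rather than on $\norm{f_k}_{L^2(L^2)}^2$ (the paper's two-step $\E_{k+1}\leq C^k\E_{k-1}^{3/2}$) and averaging in time to select $s_k$ rather than multiplying the energy inequality by a cutoff $\eta(t)$ are routine cosmetic variants. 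One point to tighten in a final write-up: the initial-data term $\int f_{k+1}(s_k,\cdot)^2$ produced by your averaging step is a fourth contribution that must also be processed with the $L^3$ gain (insert $f_{k+1}^2\leq 2^{k+1}f_k^3$ on $\supp f_{k+1}$ before averaging), otherwise that single term keeps the recursion linear --- as written you only reprocess the three source terms coming from the right-hand side of \eqref{DG energy ddt}.
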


\begin{proof}
Consider for $k \in \N$ the constants $t_k := -1 - 2^{-k}$ (so that $t_0 = -2$ and $t_\infty = -1$), and functions
\[ f_k := (f - 1 + 2^{-k})_+ \]
(so that $f_0 = (f)_+$ and $f_\infty = (f-1)_+$).  

Define
\[ \E_k := \int_{t_k}^0 \int_\Omega f_k^2 \,dxdt. \]

When $f_{k+1}>0$, then in particular $f_k \geq 2^{-k-1}$.  Thus for any finite $p$, there exists a constant $C$ so
\begin{equation} \label{non-linearization} \indic{f_{k+1}>0} \leq C^k f_k^p. \end{equation}
\vskip.3cm

Let $k\geq 0$ and define $\eta:[-2,0] \to \R$ a continuous function
\[ \eta(t) := \begin{cases}
0 & t \leq t_k \\
2^{k+1} (t-t_k) & t_k \leq t \leq t_{k+1} \\
1 & t_{k+1} \leq t.
\end{cases} \]

Let $s \in (t_{k+1},0)$.  Multiplying the inequality \eqref{DG energy ddt} with cutoff $a_k$ by $\eta(t)$ and integrating in time from $-2$ to $s$, then integrating by parts, we obtain
\[ \int f_k(s,x)^2\,dx - 2^{k+1} \int\displaylimits_{t_k}^{t_{k+1}} \int f_k(t,x)^2 \,dxdt + \int\displaylimits_{t_{k+1}}^s \HDint{1/2}{f_k} \,dxdt \leq \bar{C} \paren{ \int\displaylimits_{t_k}^0 \int \indic{f_k>0} + f_k + f_k^2 \,dxdt } \]
By taking the supremum over all $s \in (t_{k+1},0)$, we obtain
\begin{equation} \label{DG energy sup}
\sup_{[t_{k+1},0]} \int f_k^2 \,dx + \int_{t_{k+1}}^0 \HDint{1/2}{f_k}\,dxdt \leq C \paren{ 2^{k+1} \int_{t_k}^0 \int f_k^2\,dxdt + \int_{t_k}^0 \int \indic{f_k > 0} + f_k \,dxdt } 
\end{equation}
\vskip.3cm

From Proposition~\ref{thm:hadamard 3 lines} and Sobolev embedding, 
\[ \int_{t_{k+1}}^0 \paren{ \int f_k^4 \,dx }^{1/2} \,dt \leq C \int_{t_{k+1}}^0 \HDint{1/2}{f_k} \,dxdt. \]
Therefore by the Riesz-Thorin interpolation theorem,
\[ \int_{t_{k+1}}^0 \int f_k^3 \,dxdt \leq C \paren{ \sup_{[t_{k+1},0]} \int f_k^2 \,dx + \int_{t_{k+1}}^0 \HDint{1/2}{f_k} }^{3/2}. \]
This estimate, along with \eqref{DG energy sup} and \eqref{non-linearization}, and the fact that $t_{k-1} < t_k$ and $f_{k-1} \geq f_k$, tell us that
\[ \int_{t_{k+1}}^0 \int f_k^3 \,dxdt \leq C^k \E_{k-1}^{3/2}. \]

Now we can estimate, using again \eqref{non-linearization} and the fact $f_k \geq f_{k+1}$,
\[ \E_{k+1} \leq C^k \int_{t_{k+1}}^0 \int f_k^3 \,dxdt \leq C^k \E_{k-1}^{3/2}. \]
\vskip.3cm

This nonlinear recursive inequality $\E_{k+1} \leq C^k \E_{k-1}^{3/2}$, by a standard fact about nonlinear recursions (see \cite{DG} or \cite{Va.dg}), tells us that there exists a constant $\delta$ depending only on $C$ (which in turn depends only on the constant $\bar{C}$ in \eqref{DG energy ddt})
\[ \E_0 \leq \delta \textrm{ implies } \lim_{k \to \infty} \E_k = 0. \]

By assumption
\[ \E_0 = \int_{-2}^0 \int (f)_+ \leq \delta. \]
Therefore $\E_k \to 0$ and, by the dominated convergence theorem,
\[ \int_{-1}^0 \int (f-1)_+ \,dxdt = 0. \]

The result follows.  
\end{proof}


\begin{lemma} \label{thm:interpolation C1 to holder}
Let $\alpha \in (0,1)$.  There exists a constant $C = C(\alpha)$ such that, for any set $\Omega$ and any $f \in C^{0,1}(\Omega)$,
\[ \bracket{f}_\alpha \leq C \norm{f}_\infty^{1-\alpha} \norm{\grad f}_\infty^\alpha. \]
\end{lemma}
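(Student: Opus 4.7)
The plan is a standard two-regime interpolation argument. For any two distinct points $x,y \in \Omega$, we have the elementary pointwise bound $|f(x)-f(y)| \leq 2\norm{f}_\infty$ as well as the Lipschitz bound $|f(x)-f(y)| \leq \norm{\grad f}_\infty |x-y|$ (the latter is immediate on convex sets, and holds on sufficiently regular open subsets of $\R^2$ of the kind considered here). Dividing by $|x-y|^\alpha$, the ratio $|f(x)-f(y)|/|x-y|^\alpha$ is simultaneously controlled by $2\norm{f}_\infty |x-y|^{-\alpha}$ and by $\norm{\grad f}_\infty |x-y|^{1-\alpha}$.

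The first estimate decreases in $|x-y|$ while the second increases; they balance at the scale
\[ r^\ast := \frac{2\norm{f}_\infty}{\norm{\grad f}_\infty}. \]
For $|x-y| \leq r^\ast$ I would use the Lipschitz estimate, yielding $\norm{\grad f}_\infty (r^\ast)^{1-\alpha} = 2^{1-\alpha}\norm{f}_\infty^{1-\alpha} \norm{\grad f}_\infty^\alpha$, and for $|x-y| \geq r^\ast$ I would use the $L^\infty$ estimate, yielding $2\norm{f}_\infty (r^\ast)^{-\alpha} = 2^{1-\alpha}\norm{f}_\infty^{1-\alpha} \norm{\grad f}_\infty^\alpha$. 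Taking the supremum over $x \neq y$ then gives the conclusion with $C = 2^{1-\alpha}$.

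The degenerate cases are immediate: if $\norm{f}_\infty = 0$ then $f\equiv 0$ and both sides vanish, while if $\norm{\grad f}_\infty = 0$ the function is locally constant and either both sides vanish or the right-hand side is infinite and there is nothing to prove. There is no genuine obstacle here—this is the textbook interpolation between $L^\infty$ and $C^{0,1}$—and the only subtlety is verifying that the Lipschitz estimate from the $L^\infty$-gradient bound is actually available on $\Omega$, which is harmless in the applications of this lemma in the paper.
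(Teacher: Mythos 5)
Your proof is correct and gives the same constant $C = 2^{1-\alpha}$ as the paper's. The difference is purely in packaging: you split cases at the balanced scale $r^\ast = 2\norm{f}_\infty/\norm{\grad f}_\infty$, using the Lipschitz bound for $|x-y| \le r^\ast$ and the $L^\infty$ bound for $|x-y|\ge r^\ast$, whereas the paper writes the pointwise multiplicative identity
\[
\frac{|f(x)-f(y)|}{|x-y|^\alpha} \;=\; |f(x)-f(y)|^{1-\alpha}\left(\frac{|f(x)-f(y)|}{|x-y|}\right)^{\alpha}
\]
and bounds the two factors by $(2\norm{f}_\infty)^{1-\alpha}$ and $\norm{\grad f}_\infty^\alpha$ respectively, so no case split is needed. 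These are the same interpolation in slightly different clothing; the paper's factorization is the shorter route, and your two-regime version makes the balancing scale visible. One thing worth noting: you are more careful than the paper about the implicit domain hypothesis. Both arguments pass through the inequality $|f(x)-f(y)| \le \norm{\grad f}_\infty\,|x-y|$, which is not literally true for ``any set $\Omega$'' as the lemma's statement claims (it needs, e.g., convexity or 1-quasiconvexity so that the straight segment argument applies). You flag this explicitly; the paper does not, though it only ever invokes the lemma on domains where it is harmless.
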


\begin{proof}
This simple lemma is a straightforward calculation:
\begin{align*} 
\sup_{x,y \in \Omega} \frac{|f(x)-f(y)|}{|x-y|^\alpha} &= \sup |f(x)-f(y)|^{1-\alpha} \paren{\frac{|f(x)-f(y)|}{|x-y|}}^\alpha 
\\ &\leq \paren{2 \norm{f}_\infty}^{1-\alpha} \paren{ \sup \frac{|f(x)-f(y)|}{|x-y|} }^\alpha
\\ &\leq C \norm{f}_\infty^{1-\alpha} \norm{\grad f}_\infty^\alpha.
\end{align*}
\end{proof}

\begin{lemma} \label{thm:interpolation holder to C1}
Let $\alpha \in (0,1)$ and $\Omega$ a set that satisfies the cone condition.  There exist constants $C = C(\alpha, \Omega)$ and $\ell = \ell(\Omega)$ such that, for any $f \in C^{1,\alpha}(\Omega)$
\[ \norm{\grad f}_\infty \leq C \paren{ \delta\n \norm{f}_\infty  + \delta^\alpha \bracket{\grad f}_\alpha }\]
for all $\delta < \ell$.  
\end{lemma}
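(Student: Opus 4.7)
The plan is to reduce the multi-dimensional estimate to the one-dimensional interpolation inequality by exploiting the cone condition to find, inside $\Omega$, a small ball all of whose directions are accessible, and then to transport the bound back to $x$ using the H\"older continuity of $\grad f$.

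First, I would unpack the cone condition: there exist constants $\ell_0 > 0$ and $\theta_0 \in (0,\pi/2)$ such that for every $x \in \Omega$ there is a truncated cone $V_x \subseteq \Omega$ with vertex $x$, height $\ell_0$, opening angle $\theta_0$, and axis some unit vector $\nu_x$. Set $\ell := \ell_0/2$, and fix any $\delta \leq \ell$ and any $x \in \Omega$. Consider the point $x_0 := x + (\delta/2)\nu_x$, which lies in $V_x$; by elementary cone geometry the closed ball $\overline{B_r(x_0)}$ is contained in $V_x \subseteq \Omega$ for $r := (\delta/2)\sin\theta_0$. In particular, for every unit vector $v$ the segment $[x_0, x_0 + rv]$ lies in $\Omega$.

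Next, I would record the scalar interpolation: if $g \in C^{1,\alpha}([0,r])$, then by the mean value theorem there exists $\xi\in(0,r)$ with $g'(\xi) = (g(r)-g(0))/r$, hence $|g'(\xi)| \leq 2 r^{-1} \norm{g}_\infty$, and therefore
\[ |g'(0)| \leq |g'(\xi)| + |g'(0)-g'(\xi)| \leq 2 r^{-1} \norm{g}_\infty + r^\alpha [g']_\alpha. \]
Applying this to $g(t) := f(x_0 + tv)$ for each unit vector $v$ and taking a supremum over $v$ yields
\[ |\grad f(x_0)| \leq 2 r^{-1} \norm{f}_\infty + r^\alpha [\grad f]_\alpha. \]

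Finally, using the H\"older regularity of $\grad f$ and $|x - x_0| = \delta/2$,
\[ |\grad f(x)| \leq |\grad f(x_0)| + [\grad f]_\alpha (\delta/2)^\alpha \leq 2 r^{-1} \norm{f}_\infty + \bigl(r^\alpha + (\delta/2)^\alpha\bigr) [\grad f]_\alpha. \]
Since $r = (\sin\theta_0 /2)\,\delta$, this is bounded by $C(\alpha,\theta_0) \bigl(\delta^{-1}\norm{f}_\infty + \delta^\alpha [\grad f]_\alpha\bigr)$; taking the supremum over $x \in \Omega$ completes the proof. The only subtle point is the first step: at the vertex $x$ of the cone only a restricted set of directions is available, so one must slide inward to the point $x_0$ (paying a cost controlled by $[\grad f]_\alpha \delta^\alpha$) to obtain a genuine ball in which the one-dimensional interpolation can be applied in every direction simultaneously.
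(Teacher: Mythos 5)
Your proof is correct, and it takes a route that differs in a mild but genuine way from the paper's. The paper's argument works directly at the vertex $x$: from the cone condition it extracts two unit vectors $e_1, e_2$ with $|e_1\cdot e_2| \leq a < 1$ along which rays of length $\ell$ stay in $\Omega$, bounds each directional derivative $\del_i f(x)$ by comparing it to the average $\frac{1}{\delta}\int_0^\delta \del_i f(x+\tau e_i)\,d\tau = \frac{1}{\delta}\big(f(x+\delta e_i)-f(x)\big)$ via the H\"older seminorm of $\grad f$, and then recovers $|\grad f(x)|$ from the two directional derivatives by linear algebra (giving a factor like $(1-a^2)^{-1}$). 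You instead slide inward to a point $x_0$ on the cone axis at distance $\delta/2$, observe that a ball $B_r(x_0)$ with $r\sim\delta$ sits inside the cone, apply the one-dimensional mean-value interpolation along \emph{every} unit direction from $x_0$, take a supremum over directions to get $|\grad f(x_0)|$ directly, and pay a final $[\grad f]_\alpha (\delta/2)^\alpha$ to transport the estimate back to $x$. What your version buys is that you never need the linear-algebra step linking two directional derivatives to the full gradient; what it costs is the extra transport term (harmless, since it is of the same order $\delta^\alpha[\grad f]_\alpha$ as the term already present) and the small geometric argument that $B_r(x_0)\subseteq V_x$. Both approaches land on the same conclusion with constants depending on the cone parameters, and your constant-tracking and ball-inclusion computation ($r=(\delta/2)\sin\theta_0$, $\delta/2+r<\ell_0$) are correct.
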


The idea of the proof is to average $\grad f$ along an interval of length $\delta$ with endpoint $x$.  The magnitude of the average will be small, since $f \in L^\infty$, and the average will differ not very much from $\grad f(x)$ since $\grad f \in C^{1,\alpha}$.  

\begin{proof}
Since $\Omega$ satisfies the cone condition, there exist positive constants $\ell$ and $a<1$ such that, at each point $x \in \bar{\Omega}$, there exist two unit vectors $e_1$ and $e_2$ such that $|e_1\cdot e_2| \leq a$ and $x + \tau e_i \in \Omega$ for $i=1,2$ and $0 < \tau \leq \ell$.  In other words, $\Omega$ contains rays at each point that extend for length $\ell$, end at $x$, and are non-parallel with angle at least $\cos\n(a)$.  

Consider the directional derivative $\del_i f$ of $f$ along the direction $e_i$, and observe that for any $0 < \delta \leq \ell$,
\begin{equation} \label{average bounded by Linfty} \abs{\int_0^\delta \del_i f(x + \tau e_i) \,d\tau} = \abs{f(x+\delta e_i) - f(x)} \leq 2 \norm{f}_\infty. \end{equation}

On the other hand, $\del_i f$ is continous so, for any $\tau \in (0,\ell]$,
\[ \abs{\del_i f(x) - \del_i f(x+\tau e_i)} \leq \bracket{\grad f}_\alpha \tau^\alpha. \]
From this, we obtain that
\[ \int_0^\delta \del_i f(x + \tau e_i) \,d\tau \leq \int_0^\delta \paren{\del_i f(x) + \bracket{\grad f}_\alpha \tau^\alpha } \,d\tau = \delta \del_i f(x) + \bracket{\grad f}_\alpha \frac{\delta^{1+\alpha}}{1+\alpha} \]
and a similar bound holds from below.  Thus
\[ \abs{ \delta \del_i f(x) - \int_0^\delta \del_i f(x + \tau e_i) \,d\tau} \leq \bracket{\grad f}_\alpha \frac{\delta^{1+\alpha}}{1+\alpha}. \]

Combining this bound with \eqref{average bounded by Linfty}, we obtain
\[ \abs{\del_i f(x)} \leq \frac{2}{\delta} \norm{f}_\infty + \frac{\delta^\alpha}{1+\alpha} \bracket{\grad f}_\alpha. \]

This bound is independent of $x$ and of $i=1,2$.  Since $e_1 \cdot e_2 \leq a$ by assumption, by a little linear algebra we can bound $\grad f$ in terms of the $\del_i f$ and obtain that, for all $\delta \in (0,\ell]$,
\[ \norm{\grad f}_\infty \leq \frac{C}{1-a^2} \paren{ \delta\n \norm{f}_\infty + \delta^\alpha \bracket{\grad f}_\alpha }. \]

\end{proof}

\begin{lemma} \label{thm:technical scaling of barrier}
There exist constants $\bar{\lambda} > 0$ and $\alpha > 1$ such that, for any $0 < \eps \leq 1/2$ and any $z \geq 1$
\[ \paren{|\eps\n (z - 1) + 3|^{1/4} - 2^{1/4}}_+ - \alpha \paren{|z|^{1/4} - 2^{1/4}}_+ \geq \bar{\lambda}. \]
\end{lemma}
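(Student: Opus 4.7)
The plan is to write $\phi(w) := (|w|^{1/4} - 2^{1/4})_+$ and prove the inequality
\[ \phi(\eps\n(z-1)+3) - \alpha\, \phi(z) \geq \bar{\lambda} \]
by first reducing to a single worst-case scaling and then splitting on whether $\phi(z)$ vanishes.

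First I would exploit the hypothesis $\eps \leq 1/2$, which gives $\eps\n \geq 2$, to observe that for every $z \geq 1$
\[ \eps\n(z-1) + 3 \geq 2(z-1) + 3 = 2z+1 \geq 3 > 0. \]
Since $\phi$ is nondecreasing on $[0,\infty)$, this reduces the problem to proving $\phi(2z+1) - \alpha\,\phi(z) \geq \bar{\lambda}$ for all $z \geq 1$, which is independent of $\eps$.

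For $z \in [1,2]$ we have $\phi(z) = 0$, while $\phi(2z+1) \geq \phi(3) = 3^{1/4} - 2^{1/4} > 0$, so the left hand side is at least $3^{1/4} - 2^{1/4}$. For $z > 2$ I would use the crude estimate $\phi(2z+1) \geq (2z)^{1/4} - 2^{1/4} = 2^{1/4}(z^{1/4} - 1)$ to expand
\[ \phi(2z+1) - \alpha\, \phi(z) \geq 2^{1/4}(z^{1/4} - 1) - \alpha(z^{1/4} - 2^{1/4}) = (2^{1/4} - \alpha)\, z^{1/4} + (\alpha - 1)\, 2^{1/4}. \]
The crucial choice is $\alpha \in (1, 2^{1/4})$: this makes both coefficients positive, and since $z \geq 2$ gives $z^{1/4} \geq 2^{1/4}$, the right hand side is bounded below by $(2^{1/4} - \alpha)\, 2^{1/4} + (\alpha-1)\, 2^{1/4} = 2^{1/4}(2^{1/4} - 1) > 0$.

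Combining the two regimes, the lemma holds with any $\alpha \in (1, 2^{1/4})$ and $\bar{\lambda} := \min\!\left(3^{1/4} - 2^{1/4},\, 2^{1/4}(2^{1/4} - 1)\right)$. There is no substantive obstacle: the content of the lemma is simply that the sublinear growth of $|\cdot|^{1/4}$ together with doubling the argument provides the multiplicative slack $2^{1/4} > 1$ needed to absorb any $\alpha$ strictly less than $2^{1/4}$, and the $+3$ shift handles the trivial regime where $z$ still lies in the flat part of $\phi$.
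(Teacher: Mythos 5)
Your proof is correct. The opening reduction is the same as the paper's: both you and the paper observe that for $z\geq 1$ the argument $\eps^{-1}(z-1)+3$ is minimized at $\eps=1/2$, which reduces the problem to bounding $\phi(2z+1)-\alpha\,\phi(z)$ from below.

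The core of the argument, however, is genuinely different. The paper proceeds non-constructively: it notes that for any fixed $\alpha<2^{1/4}$ the function $f_\alpha(z)=\phi(2z+1)-\alpha\phi(z)$ tends to $\infty$ as $z\to\infty$, hence is $\geq 1$ on some $[N,\infty)$; it then writes $f_\alpha=g_1-(\alpha-1)g_2$ with $g_1>0$ on the compact set $[1,N]$ and chooses $\alpha$ close enough to $1$ to make $f_\alpha$ positive there, producing $\bar\lambda$ as an unspecified positive infimum. You instead give a short, fully explicit computation: on $[1,2]$ the subtracted term vanishes and $\phi(2z+1)\geq\phi(3)$; on $[2,\infty)$ the crude bound $\phi(2z+1)\geq 2^{1/4}(z^{1/4}-1)$ yields a linear expression in $z^{1/4}$ with both coefficients positive for any $\alpha\in(1,2^{1/4})$, minimized at $z=2$. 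This buys you an explicit admissible range $\alpha\in(1,2^{1/4})$ and the closed form $\bar\lambda=\min\bigl(3^{1/4}-2^{1/4},\ 2^{1/4}(2^{1/4}-1)\bigr)$, which the paper's compactness argument does not deliver. Both arguments are correct and elementary; yours is more informative since it quantifies the constants.
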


\begin{proof}
For $z$ fixed, this function is increasing as $\eps$ decreases, so it will suffice to show the lemma when $\eps = 1/2$, that is to show
\[ f_\alpha(z) := \paren{|2 z + 1|^{1/4} - 2^{1/4}}_+ - \alpha \paren{|z|^{1/4} - 2^{1/4}}_+ \geq \bar{\lambda} \]
for all $z \geq 1$.  Note that $f_\alpha(z) \geq f_\beta(z)$ if $\alpha < \beta$.  

For $z \geq 2$, 
\[ f_\alpha(z) = (2z+1)^{1/4} - 2^{1/4} - \alpha z^{1/4} + \alpha 2^{1/4} = z^{1/4} \paren{(2 + 1/z)^{1/4} - \alpha} + (\alpha-1)2^{1/4}. \]
For any $\alpha < 2^{1/4}$, clearly $f_\alpha(z)$ tends to $\infty$ as $z$ increases. Therefore there exist $N$ and $\alpha_0 > 1$ such that 
\[ f_\alpha(z) \geq 1 \qquad \forall z \geq N, \alpha\leq \alpha_0. \]
\vskip.3cm

We can decompose $f_\alpha(z) = g_1(z) - (\alpha - 1) g_2(z)$ where
\begin{align*} 
g_1(z) &:= \paren{|2 z + 1|^{1/4} - 2^{1/4}}_+ - \paren{|z|^{1/4} - 2^{1/4}}_+, \\
g_2(z) &:= \paren{|z|^{1/4} - 2^{1/4}}_+. 
\end{align*}
Note that $g_1$, $g_2$ are both continuous, and $g_1(z)$ is strictly positive for $z \geq 1$.  Therefore we can take $\alpha \in (1,\alpha_0]$ small enough that
\[ \alpha - 1 < \frac{\inf_{[1,N]} g_1}{\sup_{[1,N]} g_2}. \]

For this $\alpha$, $f_\alpha(z)$ is strictly positive on the compact interval $[1,N]$, and $f_\alpha(z) \geq 1$ on $[N,\infty)$.  Therefore $f_\alpha(z)$ has a positive lower bound $\bar{\lambda}$ for all $z \geq 1$.  
\end{proof}


%
%
%


\bibliographystyle{alpha}
\bibliography{SQG-references}

\begin{thebibliography}{HPGS95}

\bibitem[ACV16]{AlCaVa}
M.~Allen, L.~Caffarelli, and A.~Vasseur.
\newblock A parabolic problem with a fractional time derivative.
\newblock {\em Arch. Ration. Mech. Anal.}, 221(2):603--630, 2016.

\bibitem[BCD11]{BaChDa}
H.~Bahouri, J.-Y. Chemin, and R.~Danchin.
\newblock {\em Fourier analysis and nonlinear partial differential equations},
  volume 343 of {\em Grundlehren der Mathematischen Wissenschaften [Fundamental
  Principles of Mathematical Sciences]}.
\newblock Springer, Heidelberg, 2011.

\bibitem[BDY12]{BuDuYa}
H.-Q. Bui, X.-T. Duong, and L.~Yan.
\newblock Calder\'{o}n reproducing formulas and new {B}esov spaces associated
  with operators.
\newblock {\em Adv. Math.}, 229(4):2449--2502, 2012.

\bibitem[BZ10]{BeZh}
J.~Benedetto and S.~Zheng.
\newblock Besov spaces for the {S}chr\"{o}dinger operator with barrier
  potential.
\newblock {\em Complex Anal. Oper. Theory}, 4(4):777--811, 2010.

\bibitem[CCV11]{CaChVa.nio}
L.~Caffarelli, C.-H. Chan, and A.~Vasseur.
\newblock Regularity theory for parabolic nonlinear integral operators.
\newblock {\em J. Amer. Math. Soc.}, 24(3):849--869, 2011.

\bibitem[CCW01]{CoCoWu}
P.~Constantin, D.~Cordoba, and J.~Wu.
\newblock On the critical dissipative quasi-geostrophic equation.
\newblock {\em Indiana Univ. Math. J.}, 50(Special Issue):97--107, 2001.
\newblock Dedicated to Professors Ciprian Foias and Roger Temam (Bloomington,
  IN, 2000).

\bibitem[Cha71]{Ch}
J.~Charney.
\newblock Geostrophic turbulence.
\newblock {\em Journal of the Atmospheric Sciences}, 28(6):1087--1095, 1971.

\bibitem[CI16]{CoIg.sqg}
P.~Constantin and M.~Ignatova.
\newblock Critical {SQG} in bounded domains.
\newblock {\em Ann. PDE}, 2(2):Art. 8, 42, 2016.

\bibitem[CI17]{CoIg.fraclap}
P.~Constantin and M.~Ignatova.
\newblock Remarks on the fractional {L}aplacian with {D}irichlet boundary
  conditions and applications.
\newblock {\em Int. Math. Res. Not. IMRN}, (6):1653--1673, 2017.

\bibitem[CIN18]{CoIgNg}
P.~Constantin, M.~Ignatova, and H.-Q. Nguyen.
\newblock Inviscid limit for {SQG} in bounded domains.
\newblock {\em SIAM J. Math. Anal.}, 50(6):6196--6207, 2018.

\bibitem[CKN82]{CaKoNi}
L.~Caffarelli, R.~Kohn, and L.~Nirenberg.
\newblock Partial regularity of suitable weak solutions of the
  {N}avier-{S}tokes equations.
\newblock {\em Comm. Pure Appl. Math.}, 35(6):771--831, 1982.

\bibitem[CMT94]{CoMaTa}
P.~Constantin, A.~J. Majda, and E.~Tabak.
\newblock Formation of strong fronts in the {$2$}-{D} quasigeostrophic thermal
  active scalar.
\newblock {\em Nonlinearity}, 7(6):1495--1533, 1994.

\bibitem[CN18a]{CoNg}
P.~Constantin and H.~Nguyen.
\newblock Global weak solutions for {SQG} in bounded domains.
\newblock {\em Comm. Pure Appl. Math.}, 71(11):2323--2333, 2018.

\bibitem[CN18b]{CoNg.strong}
P.~Constantin and H.-Q. Nguyen.
\newblock Local and global strong solutions for {SQG} in bounded domains.
\newblock {\em Phys. D}, 376/377:195--203, 2018.

\bibitem[CS07]{CaSi}
L.~Caffarelli and L.~Silvestre.
\newblock An extension problem related to the fractional {L}aplacian.
\newblock {\em Comm. Partial Differential Equations}, 32(7-9):1245--1260, 2007.

\bibitem[CS16]{CaSt}
L.~A. Caffarelli and P.~R. Stinga.
\newblock Fractional elliptic equations, {C}accioppoli estimates and
  regularity.
\newblock {\em Ann. Inst. H. Poincar\'{e} Anal. Non Lin\'{e}aire},
  33(3):767--807, 2016.

\bibitem[CV10]{CaVa.sqg}
L.~A. Caffarelli and A.~Vasseur.
\newblock Drift diffusion equations with fractional diffusion and the
  quasi-geostrophic equation.
\newblock {\em Ann. of Math. (2)}, 171(3):1903--1930, 2010.

\bibitem[CV12]{CoVi}
P.~Constantin and V.~Vicol.
\newblock Nonlinear maximum principles for dissipative linear nonlocal
  operators and applications.
\newblock {\em Geom. Funct. Anal.}, 22(5):1289--1321, 2012.

\bibitem[CV17]{ChVa}
C.-H. Chan and A.~Vasseur.
\newblock De {G}iorgi techniques applied to the {H}\"{o}lder regularity of
  solutions to {H}amilton-{J}acobi equations.
\newblock In {\em From particle systems to partial differential equations},
  volume 209 of {\em Springer Proc. Math. Stat.}, pages 117--137. Springer,
  Cham, 2017.

\bibitem[CW08]{CoWu}
P.~Constantin and J.~Wu.
\newblock Regularity of {H}\"{o}lder continuous solutions of the supercritical
  quasi-geostrophic equation.
\newblock {\em Ann. Inst. H. Poincar\'{e} Anal. Non Lin\'{e}aire},
  25(6):1103--1110, 2008.

\bibitem[DG57]{DG}
E.~De~Giorgi.
\newblock Sulla differenziabilit\`a e l'analiticit\`a delle estremali degli
  integrali multipli regolari.
\newblock {\em Mem. Accad. Sci. Torino. Cl. Sci. Fis. Mat. Nat. (3)}, 3:25--43,
  1957.

\bibitem[DP09]{DoPa}
H.~Dong and N.~Pavlovi\'{c}.
\newblock Regularity criteria for the dissipative quasi-geostrophic equations
  in {H}\"{o}lder spaces.
\newblock {\em Comm. Math. Phys.}, 290(3):801--812, 2009.

\bibitem[FMP04]{FMP}
S.~Fornaro, G.~Metafune, and E.~Priola.
\newblock Gradient estimates for {D}irichlet parabolic problems in unbounded
  domains.
\newblock {\em J. Differential Equations}, 205(2):329--353, 2004.

\bibitem[GT01]{GiTr}
D.~Gilbarg and N.S. Trudinger.
\newblock {\em Elliptic Partial Differential Equations of Second Order}.
\newblock Classics in Mathematics. U.S. Government Printing Office, 2001.

\bibitem[HPGS95]{GaHePiSw}
I.~Held, R.~Pierrehumbert, S.~Garner, and K.~Swanson.
\newblock Surface quasi-geostrophic dynamics.
\newblock {\em Journal of Fluid Mechanics}, 282:1–20, 1995.

\bibitem[IMT17]{IMT.bilinear}
T.~{Iwabuchi}, T.~{Matsuyama}, and K.~{Taniguchi}.
\newblock Bilinear estimates in {B}esov spaces generated by the {D}irichlet
  {L}aplacian.
\newblock {\em arXiv e-prints}, page arXiv:1705.08595, May 2017.

\bibitem[IMT18]{IMT.schrodinger}
T.~Iwabuchi, T.~Matsuyama, and K.~Taniguchi.
\newblock Boundedness of spectral multipliers for {S}chr\"{o}dinger operators
  on open sets.
\newblock {\em Rev. Mat. Iberoam.}, 34(3):1277--1322, 2018.

\bibitem[IMT19]{IMT.besov}
T.~Iwabuchi, T.~Matsuyama, and K.~Taniguchi.
\newblock {B}esov spaces on open sets.
\newblock {\em Bull. Sci. Math.}, 152:93--149, 2019.

\bibitem[IS16]{ImSi}
C.~{Imbert} and L.~{Silvestre}.
\newblock {The weak {H}arnack inequality for the {B}oltzmann equation without
  cut-off}.
\newblock {\em arXiv e-prints}, page arXiv:1608.07571, August 2016.

\bibitem[JK95]{JeKe}
D.~Jerison and C.~Kenig.
\newblock The inhomogeneous {D}irichlet problem in {L}ipschitz domains.
\newblock {\em J. Funct. Anal.}, 130(1):161--219, 1995.

\bibitem[KN09]{KiNa.variations}
A.~Kiselev and F.~Nazarov.
\newblock A variation on a theme of {C}affarelli and {V}asseur.
\newblock {\em Zap. Nauchn. Sem. S.-Peterburg. Otdel. Mat. Inst. Steklov.
  (POMI)}, 370(Kraevye Zadachi Matematichesko\u{\i} Fiziki i Smezhnye Voprosy
  Teorii Funktsi\u{\i}. 40):58--72, 220, 2009.

\bibitem[KNV07]{KiNaVo}
A.~Kiselev, F.~Nazarov, and A.~Volberg.
\newblock Global well-posedness for the critical 2{D} dissipative
  quasi-geostrophic equation.
\newblock {\em Invent. Math.}, 167(3):445--453, 2007.

\bibitem[Kri15]{Kr}
D.~Kriventsov.
\newblock Regularity for a local-nonlocal transmission problem.
\newblock {\em Arch. Ration. Mech. Anal.}, 217(3):1103--1195, 2015.

\bibitem[NV18a]{NoVa.qg}
M.~D. Novack and A.~Vasseur.
\newblock Global in time classical solutions to the 3{D} quasi-geostrophic
  system for large initial data.
\newblock {\em Comm. Math. Phys.}, 358(1):237--267, 2018.

\bibitem[NV18b]{NoVa.bounded}
M.~D. {Novack} and A.~{Vasseur}.
\newblock {The Inviscid 3{D} Quasi-Geostrophic System on Bounded Domains}.
\newblock {\em arXiv e-prints}, page arXiv:1806.00547, Jun 2018.

\bibitem[NV19]{NoVa.solutions}
M.~D. {Novack} and A.~{Vasseur}.
\newblock {Classical Solutions for the 3{D} Quasi-Geostrophic System on a
  Bounded Domain}.
\newblock {\em arXiv e-prints}, page arXiv:1904.13050, Apr 2019.

\bibitem[Ped92]{Pe}
J.~Pedlosky.
\newblock {\em Geophysical Fluid Dynamics}.
\newblock Springer study edition. Springer New York, 1992.

\bibitem[Res95]{Re}
S.~Resnick.
\newblock {\em Dynamical problems in non-linear advective partial differential
  equations}.
\newblock ProQuest LLC, Ann Arbor, MI, 1995.
\newblock Thesis (Ph.D.)--The University of Chicago.

\bibitem[ST10]{StTo}
P.-R. Stinga and J.-L. Torrea.
\newblock Extension problem and {H}arnack's inequality for some fractional
  operators.
\newblock {\em Comm. Partial Differential Equations}, 35(11):2092--2122, 2010.

\bibitem[{Sto}18]{St}
L.~F. {Stokols}.
\newblock Holder continuity for a family of nonlocal hypoelliptic kinetic
  equations.
\newblock {\em arXiv e-prints}, page arXiv:1812.09006, Dec 2018.

\bibitem[SV]{StVa.higher}
L.~F. Stokols and A.~Vasseur.
\newblock Smooth global solutions for {SQG} on bounded domains.
\newblock {\em Work in Progress}.

\bibitem[SV18]{StVa.hamjac}
L.~F. Stokols and A.~Vasseur.
\newblock De {G}iorgi techniques applied to {H}amilton-{J}acobi equations with
  unbounded right-hand side.
\newblock {\em Commun. Math. Sci.}, 16(6):1465--1487, 2018.

\bibitem[Vas16]{Va.dg}
A.~Vasseur.
\newblock The {D}e {G}iorgi method for elliptic and parabolic equations and
  some applications.
\newblock In {\em Lectures on the analysis of nonlinear partial differential
  equations. {P}art 4}, volume~4 of {\em Morningside Lect. Math.}, pages
  195--222. Int. Press, Somerville, MA, 2016.

\bibitem[Zhe06]{Zh}
S.~Zheng.
\newblock Littlewood-{P}aley theorem for {S}chr\"{o}dinger operators.
\newblock {\em Anal. Theory Appl.}, 22(4):353--361, 2006.

\end{thebibliography}

\end{document}